\definecolor{gris3}{rgb}{0.3,0.3,0.3}
\definecolor{Green}{rgb}{0,.6,0}
\definecolor{Blue}{rgb}{0,0,1}
\definecolor{Red}{rgb}{1,0,0}
\definecolor{Gray}{rgb}{0.2,0.2,0.2}
\definecolor{Maroon}{rgb}{0.6,0.05,0.03}
\newcommand{\noir}{\color{black}}
\newcommand{\blc}{\color{black}}
\newcommand{\bl}{\color{black}}
\tikzstyle{c} = [rectangle, rounded corners, draw]
\tikzset{box/.style={draw,rectangle,rounded  corners=0pt, fill=gray!15, minimum  width=3cm, minimum  height=2cm}}
\tikzset{loz/.style={draw,diamond, aspect=1.5, fill=gray!15}}
\tikzstyle{rnd}=[circle,fill=blue!25,minimum  width=5em]
\newtheorem{theorem}{Theorem}  
\newtheorem{proposition}{Proposition}
\newtheorem{corollary}{Corollary}
\newtheorem{lemma}{Lemma}
\newtheorem{remark}{Remark}
\newtheorem{assumption}{Assumption}
\newtheorem{definition}{Definition}
\newcommand{\U}{\mathcal{U}}
\newcommand{\Z}{\mathbb{Z}}
\newcommand{\R}{\mathbb{R}}
\newcommand{\Esp}{\mathbb{E}}
\newcommand{\rn}{\mathbb{R}^n}
\newcommand{\N}{\mathbb{N}}
\newcommand{\Vk}{\mathcal{V}^k}
\newcommand{\Pk}{\mathcal{P}^k}
\newcommand{\D}{\mathcal{D}}
\newcommand{\B}{\mathcal{B}}
\newcommand{\X}{\mathcal{X}}
\newcommand{\lp}{{\mathbb{L}}^p(\Omega,\mathcal{G},\mathbb{P})}
\newcommand{\pb}{\mathbb{P}}
\newcommand{\M}{\mathcal{M}}
\newcommand{\dpl}{\delta^k_p}
\newcommand{\Dp}{\Delta^k_p}
\newcommand{\Dm}{\Delta^k_m}
\newcommand{\dm}{\delta^k_m}
\newcommand{\fd}{f}
\newcommand{\Mk}{\mathcal{M}^k}
\newcommand{\hok}{h^k_0}
\newcommand{\hokx}{h^k_0(x^k)}
\newcommand{\xkf}{x^k_{\textnormal{feas}}}
\newcommand{\xhatf}{\hat{x}_{\textnormal{feas}}}
\newcommand{\Xhatf}{\hat{X}_{\textnormal{feas}}}
\newcommand{\Xkf}{X^k_{\textnormal{feas}}}
\newcommand{\Xkft}{X^{k\vee T}_{\textnormal{feas}}}
\newcommand{\xkft}{x^{k\vee t}_{\textnormal{feas}}}
\newcommand{\Xkfunt}{X^{(k+1)\vee T}_{\textnormal{feas}}}
\newcommand{\xki}{x^k_{\textnormal{inf}}}
\newcommand{\Xki}{X^k_{\textnormal{inf}}}
\newcommand{\xkfun}{x^{k+1}_{\textnormal{feas}}}
\newcommand{\Xkfun}{X^{k+1}_{\textnormal{feas}}}
\newcommand{\xzeroi}{x^0_{\textnormal{inf}}}
\newcommand{\xkiun}{x^{k+1}_{\textnormal{inf}}}
\newcommand{\Xkiun}{X^{k+1}_{\textnormal{inf}}}
\newcommand{\hsk}{h^k_s}
\newcommand{\domf}{\prec_{f;\varepsilon}}
\newcommand{\domh}{\prec_{h;\varepsilon}}
\newcommand{\hskx}{h^k_s(x^k+s^k)}
\newcommand{\fok}{f^k_0}
\newcommand{\fokx}{f^k_0(x^k)}
\newcommand{\fsk}{f^k_s}
\newcommand{\fskx}{f^k_s(x^k+s^k)}
\newcommand{\uok}{u^k_0}
\newcommand{\uokx}{u^k_0(x^k)}
\newcommand{\usk}{u^k_s}
\newcommand{\uskx}{u^k_s(x^k+s^k)}
\newcommand{\lok}{\ell^k_0}
\newcommand{\lokx}{\ell^k_0(x^k)}
\newcommand{\lskx}{\ell^k_s(x^k+s^k)}
\newcommand{\lsk}{\ell^k_s}
\newcommand{\Lok}{L^k_0}
\newcommand{\Lokx}{L^k_0(x^k)}
\newcommand{\Lsk}{L^k_s}
\newcommand{\Lskx}{L^k_s(x^k+s^k)}
\newcommand{\csk}{c^k_{j,s}}
\newcommand{\cskx}{c^k_{j,s}(x^k+s^k)}
\newcommand{\cok}{c^k_{j,0}}
\newcommand{\cokx}{c^k_{j,0}(x^k)}
\newcommand{\ds}{\displaystyle}
\newcommand{\hkm}{h^k_{\max}}
\newcommand{\Hok}{H^k_0}
\newcommand{\Hokx}{H^k_0(X^k)}
\newcommand{\Hsk}{H^k_s}
\newcommand{\Hskx}{H^k_s(X^k+S^k)}
\newcommand{\Uok}{U^k_0}
\newcommand{\Uokx}{U^k_0(x^k)}
\newcommand{\Usk}{U^k_s}
\newcommand{\Uskx}{U^k_s(x^k+s^k)}
\newcommand{\Csk}{C^k_{j,s}}
\newcommand{\Cskx}{C^k_{j,s}(X^k+S^k)}
\newcommand{\Cok}{C^k_{j,0}}
\newcommand{\Cokx}{C^k_{j,0}(X^k)}
\newcommand{\dmax}{\tau^{-\hat{z}}}
\newcommand{\Fok}{F^k_0}
\newcommand{\Fokx}{F^k_0(X^k)}
\newcommand{\Fsk}{F^k_s}
\newcommand{\Fskx}{F^k_s(X^k+S^k)}
\newcommand{\efPB}{\varepsilon}
\newcommand{\fcf}{\mathcal{F}^{C\cdot F}_{k-1}}
\newcommand{\fij}{\mathcal{F}^{I\cdot J}_{k-1}}
\newcommand{\ijk}{\mathds{1}_{J_k}}
\newcommand{\itFini}{\mathds{1}_{\{T<+\infty\}}}
\newcommand{\itkless}{\mathds{1}_{\{T\leq k\}}}
\newcommand{\itksup}{\mathds{1}_{\{T> k\}}}
\newcommand{\iikj}{\mathds{1}_{I_k^j}}
\newcommand{\iji}{\mathds{1}_{J_i}}
\newcommand{\ijkc}{\mathds{1}_{\bar{J_k}}}
\newcommand{\iik}{\mathds{1}_{I_k}}
\newcommand{\iii}{\mathds{1}_{I_i}}
\newcommand{\iikc}{\mathds{1}_{\bar{I_k}}}
\newcommand{\iimp}{\mathds{1}_{\mathcal{I}}}
\newcommand{\idf}{\mathds{1}_{\mathcal{D}_f}}
\newcommand{\idh}{\mathds{1}_{\mathcal{D}_h}}
\newcommand{\iuns}{\mathds{1}_{\mathcal{U}}}
\newcommand{\abs}[1]{\left\lvert#1\right\rvert}
\newcommand{\accolade}[1]{\left\lbrace#1\right\rbrace}
\newcommand{\E}[1]{\mathbb{E}\left(#1\right)}
\newcommand{\pr}[1]{\mathbb{P}\left(#1\right)}
\newcommand{\var}[1]{\mathbb{V}\left(#1\right)}
\newcommand{\normp}[1]{{\left\lVert#1\right\rVert}_{p}}
\newcommand{\norminf}[1]{{\left\lVert#1\right\rVert}_{\infty}}
\title{Constrained stochastic blackbox optimization using a progressive barrier and probabilistic estimates}
\author{
	\href{mailto:kwassi-joseph.dzahini@polymtl.ca}{Kwassi Joseph Dzahini}\thanks{GERAD and DÈpartement de MathÈmatiques et de GÈnie Industriel, Polytechnique MontrÈal, C.P. 6079, Succ. Centre-ville, MontrÈal, QuÈbec H3C 3A7, Canada (\href{https://www.gerad.ca/fr/people/kwassi-joseph-dzahini}{www.gerad.ca/fr/people/kwassi-joseph-dzahini}, \href{https://www.gerad.ca/Sebastien.Le.Digabel}{www.gerad.ca/Sebastien.Le.Digabel}).
	}
	\and
	\href{mailto:michael.kokkolaras@mcgill.ca}{Michael Kokkolaras}\thanks{GERAD and McGill University, Mechanical Engineering Department, 845 Rue Sherbrooke Ouest, MontrÈal, QuÈbec H3A 0G4, Canada (\href{https://www.mcgill.ca/mecheng/people/staff/michael-kokkolaras}{www.mcgill.ca/mecheng/people/staff/michael-kokkolaras}).
	}
	\and
	\href{mailto:Sebastien.Le.Digabel@gerad.ca}{S\'ebastien Le Digabel}\footnotemark[1]
}
\begin{document}
\maketitle

\vspace*{-0.5cm}

\noindent
{\bf Abstract:} 
This work introduces the StoMADS-PB algorithm for constrained stochastic blackbox optimization, which is an extension of the mesh adaptive direct-search (MADS) method originally developed for deterministic blackbox optimization under general constraints. The values of the objective and constraint functions are provided by a noisy blackbox, i.e., they can only be computed with random noise whose distribution is unknown. As in MADS, constraint violations are aggregated into a single constraint violation function. Since all functions values are numerically unavailable, StoMADS-PB uses estimates and introduces so-called probabilistic bounds for the violation. Such estimates and bounds obtained from stochastic observations are required to be accurate and reliable with high but fixed probabilities. The proposed method, which allows intermediate infeasible iterates, accepts new points using sufficient decrease conditions and imposing a threshold on the probabilistic bounds. Using Clarke nonsmooth calculus and martingale theory, Clarke stationarity convergence results for the objective and the violation function are derived with probability one.

\clearpage
\newpage
\clearpage
\newpage

\section{Introduction} 
Blackbox optimization (BBO) {\noir considers the development and analysis of algorithms designed for objectives and constraints functions that are given by a process called a blackbox} which returns an output when provided an input but whose inner workings are analytically unavailable~\cite{AuHa2017}. Mesh adaptive direct-search (MADS)~\cite{AuDe2006,AuDe09a} with progressive barrier (PB) is an algorithm for deterministic BBO. This work considers the following constrained stochastic BBO problem

\begin{equation}\label{problemPB}
\underset{x\in \D}{\min}\   f(x) 
\end{equation}
where $\D=\{x\in \X: c(x)\leq 0\}\subset\rn$ is the feasible region, $c = (c_1,c_2,\dots, c_m)^{\top}$, $\X$ is a subset of $\rn$, $f(x)=\Esp_{\Theta_0}\left[f_{\Theta_0}(x)\right]$ with  $f\colon\X \mapsto \R$, and $c_j(x)=\Esp_{\Theta_j}\left[c_{\Theta_j}(x)\right]$ with  $c_j\colon\X \mapsto \R$ for all $j\in J:=\{{\noir 1},2,\dots,m\}$. $\Esp_{\Theta_{\noir j}}$ denotes the expectation with respect to the random variable $\Theta_j$ for all ${\noir j \in J\cup\{0\}}$, which are supposed to be independent with unknown {\noir possibly different} distributions. 
$f_{\Theta_0}(\cdot)$~denotes the noisy computable version of the numerically unavailable objective function $f(\cdot)$, while for all $j\in J$, $c_{\Theta_j}(\cdot)$ denotes the noisy computable version of the numerically unavailable {\noir constraint} $c_j(\cdot)$. {\noir Note that the noisy objective function $f_{\Theta_0}$ and the constraints $c_{\Theta_j}, j\in J,$ are typically the outputs of a blackbox}.
By means of some useful terminology, constraints that must always be satisfied, such as those defining $\X$, are differentiated from those that need only to be satisfied at the solution, such as $c(x)\leq 0$. The former will be called {\it unrelaxable} {\noir non-quantifiable} constraints and the latter, {\it relaxable} {\noir quantifiable} constraints~\cite{LedWild2015}.

Solving stochastic blackbox optimization problems such as Problem~\eqref{problemPB}, which often arise in signal processing and machine learning~\cite{curtis2020adaptive}, has {\bl recently} been {\bl a topic} of intense research. {\bl Most} methods for solving such problems {\bl borrow ideas from the stochastic gradient} method~\cite{RoMo1951}.
Several works have also attempted to transfer ideas from deterministic DFO methods to the stochastic {\bl context}. {\noir However, most of such proposed methods are restricted to unconstrained {\bl optimization}. Indeed,} after~\cite{barton1996nelder} which {\noir is} among the first to propose a stochastic variant of the deterministic Nelder-Mead (NM) method~\cite{NeMe65a},~\cite{AnFe01a} also considered the optimization of functions whose evaluations are subject {\bl to random} noise and proposed an algorithm which is shown to have convergence properties, based on Markov chain theory~\cite{durrett2010probability}. Another stochastic variant of NM was recently proposed in~\cite{Ch2012} and was proved to have global convergence properties with probability one. Using elements from~\cite{BaScVi2014,LaBi2016},~\cite{chen2018stochastic} proposed STORM, a trust-region algorithm designed for stochastic optimization problems, with almost sure global convergence results.
Many other research{\noir es} that extend the traditional deterministic trust-region method to stochastic setting have been conducted in~\cite{curtis2017stochastic,shashaani2018astro}. In~\cite{paquette2018stochastic}, a classical backtracking Armijo line search method~\cite{Armi66a} has been adapted to {\noir the} stochastic optimization setting and was shown to have first-order complexity bounds.
Robust-MADS, a kernel smoothing-based variant of MADS~\cite{AuDe2006}, {\bl was proposed} in~\cite{AudIhaLedTrib2016} to approach the minimizer of an objective function whose values can only be computed with a random noise. It was shown to possess {\bl zeroth-order}~\cite{AuDeLe07} convergence properties. Another stochastic variant of MADS was proposed in~\cite{AlAuBoLed2019} for BBO, where the noise corrupting the blackbox was supposed to be Gaussian. Convergence results of the proposed method have been derived, making use of statistical inference techniques. \cite{audet2019stomads}~proposed another stochastic optimization approach using an algorithmic framework similar to that of MADS. StoMADS uses estimates of function values obtained from stochastic observations. 
By assuming that such estimates satisfy a variance condition and are sufficiently accurate with a large but fixed probability conditioned to the past, a Clarke~\cite{Clar83a} stationarity convergence result of StoMADS has been derived with probability one, using martingale theory. A general framework for stochastic {\bl directional direct-search}~\cite{CoScVibook} methods was introduced in~\cite{dzahini2020expected} with expected complexity analysis. 

{\bl All the} above stochastic optimization methods are restricted to unconstrained problems and most of them use estimated gradient information when seeking for an optimal solution. {\bl When} the gradient does not exist or is computationally expensive to estimate, heuristics such as simulated annealing methods, genetic algorithms~\cite{lacksonen2001empirical}, and tabu/scatter search~\cite{klassen2009improving}, are also used for problems with noisy constraints but do not present {\bl any convergence theory}. 
{\bl Surrogate model} based methods for constrained stochastic BBO have also been a topic of intense research{\bl ,} {\bl including} the response surface methodology with stochastic constraints~\cite{angun2009response} developed for expensive simulation.
In~\cite{AuMa2017}, the capabilities of the deterministic constrained trust-region algorithm NOWPAC~\cite{AuMa2014} are generalized for the optimization of blackboxes with inherently noisy evaluations of {\bl the objective and constraint functions}. To mitigate the noise in the latter functions evaluations, the resulting gradient-free method SNOWPAC utilizes Gaussian process surrogate combined with local fully linear surrogate models. 
Another surrogate-based approach that has gained in increasing popularity in various research fields {\bl is Kriging}, also known as {\noir Bayesian optimization}~\cite{mockus2012bayesian}. 
Various Bayesian optimization methods for constrained stochastic {\bl BBO have} been demonstrated to be efficient in practice~\cite{letham2019constrained,wang2018constrained}. 

Developing direct-search methods for BBO has received renewed interest since such methods generally known to be reliable and robust in practice~\cite{Audet2014a}, appear to be the most promising approach in most of real applications where the gradient does not exist or is computationally expensive to estimate. 
However, there is relatively scarce research on developing direct-search methods for constrained stochastic BBO, especially when noise is present in the constraint functions. A pattern search and implicit filtering algorithm (PSIFA)~\cite{diniz2019pattern,diniz2020applying} was recently developed for linearly constrained problems with a noisy objective function, and was shown to have global convergence properties. A class of direct-search methods for solving smooth linearly constrained problems was also studied in~\cite{GraRoViZh2019} but even though using a {\bl probabilistic feasible descent} based approach, this work assumes the objective and constraints function values to be exactly computed without noise.

The present work introduces StoMADS-PB, a stochastic variant of {\noir the mesh adaptive direct-search with progressive barrier}~\cite{AuDe09a}, using elements from~\cite{AuDe2006,AuDe09a,audet2019stomads,BaScVi2014,chen2018stochastic,paquette2018stochastic} and is{\noir ,} to the best of our knowledge{\noir ,} the first to propose a {\bl directional direct-search}~\cite{CoScVibook} stochastic BBO algorithm, capable to handle general noisy constraints without requiring any feasible initial point. Its main contribution is the analysis of the resulting new framework with fully supported theoretical results.
StoMADS-PB uses no gradient information to find descent directions or improve feasibility compared to prior work. Rather, it uses so-called {\bl probabilistic estimates}~\cite{chen2018stochastic} of the objective and {\bl constraint} function values and also introduces {\bl probabilistic bounds} on a {\bl constraint violation} function values. The reliability of such bounds is assumed to hold with a high but fixed probability. Moreover, although no distributions are assumed for the estimates and no assumption is made about the way they are generated, they are required to be sufficiently accurate with large but fixed probabilities and satisfy some variance conditions.


The manuscript is organized as follows. Section~\ref{sec2PB} presents the general framework of the proposed StoMADS-PB algorithm. Section~\ref{sec3} explains how the proposed method results in a stochastic process and discusses requirements on random estimates to guarantee convergence. It also shows how such estimates can be constructed in practice. Section~\ref{convAnalysisPB} presents the main convergence results. {\bl Computational} results are reported in Section~\ref{sec5} followed by a discussion and suggestions for future work. Additional results are provided as an annex.



%


\section{The StoMADS-PB algorithm}\label{sec2PB}
StoMADS-PB is based on an algorithmic framework similar to that of MADS with PB~\cite{AuDe09a}.
For the needs of the convergence analysis of Section~\ref{convAnalysisPB}, deterministic constraint violations are aggregated into a single function $h$ called {\bl the} {\bl constraint violation} function, defined using the $\ell_1$-norm for needs of  convergence studies as opposed to~\cite{AuDe09a} where an $\ell_2$-norm has been favored
\begin{equation}\label{violationFunct}
h(x) := \left\{
\begin{array}{ll}
\ds{\sum_{j=1}^m }\max\{c_j(x),0 \} & \mbox{if } x\in\X \\
+\infty & \mbox{otherwise.}
\end{array}
\right. \nonumber
\end{equation}
According to this definition, $h:\rn\rightarrow\R\cup\{+\infty\}$  and $x\in\D$, i.e., $x$ is feasible with respect to the relaxable constraints if and only if $h(x)=0$. Moreover, if $0<h(x)<+\infty$, then $x$ is called infeasible and satisfies the unrelaxable constraints but not the relaxable ones. 

In MADS with PB, feasibility improvement is achieved by decreasing $h$, specifically by comparing its function value at a current point $x^k$ to that of a trial point $x^k+s^k$, where $s^k$ denotes a direction around $x^k$.
Likewise, to decrease $f$, MADS with PB uses objective function values since they are available in {\bl the} deterministic setting.

The main challenge {\bl here} is to guarantee for StoMADS-PB such decreases as well in $f$ as in $h$ whereas their function values are unavailable numerically, using only {\bl information} provided by the noisy blackbox outputs $f_{\Theta_0}$ and $c_{\Theta_j}$, $j\in J$. This section {\bl shows} how {\bl this} can be achieved, making use of so called $\efPB$-accurate estimates introduced in~\cite{chen2018stochastic} and then {\bl presents} the general framework of the proposed method.

\subsection{Feasibility and objective function improvements}
At iteration $k$, let $x^k$ and $x^k+s^k$ be two points of $\X$. Since the constraint function values $c_j(x^k)$ and $c_j(x^k+s^k)$, $j\in J=\{1,2,\dots,m\}$, are numerically unavailable, their corresponding estimates are respectively constructed using evaluations of the noisy blackbox outputs $c_{\Theta_j}$, $j\in J$. In general for the remainder of the manuscript, unless otherwise stated, given a function $g:\X\rightarrow\R$, an estimate of $g(x^k)$ is denoted by $g_0^k(x^k)$ (or simply by $g_0^k$ if there is no ambiguity) while that  of $g(x^k+s^k)$ is denoted by $g_s^k(x^k+s^k)$ or $g_s^k$. In StoMADS-PB, the violations of the estimates $\cokx$ and $\cskx$ of $c_j(x^k)$ and $c_j(x^k+s^k)$, respectively, are aggregated in so-called {\it estimated violations} $\hokx$ and $\hskx$ defined as follows
\begin{eqnarray}\label{EstimViolation}  
\hok(x^k) &=& \left\{
\begin{array}{ll}
\ds{\sum_{j=1}^m }\max\left\lbrace \cokx,0  \right\rbrace & \mbox{if } x^k\in\X \\
+\infty & \mbox{otherwise}
\end{array}
\right.\\ \text{and}\quad  \hsk(x^k+s^k) &=& \left\{
\begin{array}{ll}
\ds{\sum_{j=1}^m }\max \left\lbrace \cskx,0 \right\rbrace & \mbox{if } x^k+s^k\in\X \\
+\infty & \mbox{otherwise.}
\end{array}
\right.
\end{eqnarray}
In order for such estimated constraint violations to be reliable enough to determine whether $h(x^k+s^k)<h(x^k)$ or not, the estimates $\cokx$ and $\cskx$ need to be sufficiently accurate. The following definition similar to that of~\cite{audet2019stomads} is adapted from~\cite{chen2018stochastic}.
\begin{definition}\label{epsilonAccDef}
	Let $\efPB>0$ be a fixed constant and $\{\dpl\}_{k\in\N}$ be a sequence of nonnegative real numbers. For a given function $g\colon\X \mapsto \R$ and $y^k\in \X$, let $g^k$ be an estimate of $g(y^k)$. Then $g^k$ is said to be an $\efPB$-accurate estimate of $g(y^k)$ for the given $\dpl$, if 
	\begin{equation}
	\abs{g^k-g(y^k)}\leq \efPB(\dpl)^2.\nonumber
	\end{equation}
\end{definition}
As in~\cite{audet2019stomads}, the role of $\dpl$ will be played by the so-called {\it poll size} parameter introduced  in Section~\ref{descriptionAlgo}. The following result provides bounds on $h(x^k)$ and $h(x^k+s^k)${\bl ,} respectively, which will allow{\bl ,} in Proposition~\ref{successh}{\bl ,} to guarantee a decrease in the constraint violation function $h$ by means of a sufficient decrease condition on the estimated violations $\hok$ and $\hsk$.
\begin{proposition}\label{boundh}
	Let $\cok$ and $\csk$ be $\efPB$-accurate estimates of $c_j(x^k)$ and $c_j(x^k+s^k)$, respectively, with $x^k$ and $x^k+s^k\in\X$. Then the followings hold:
	\begin{equation}
	\lok(x^k):=\sum_{j=1}^{m}\max\left\lbrace \cok-\efPB(\dpl)^2,0 \right\rbrace \leq h(x^k)\leq \sum_{j=1}^{m}\max\left\lbrace \cok+\efPB(\dpl)^2,0 \right\rbrace=:\uok(x^k)\label{epspr}
	\end{equation}
	and 
	\begin{equation}
	\lsk(x^k+s^k):=\sum_{j=1}^{m}\max\left\lbrace \csk-\efPB(\dpl)^2,0 \right\rbrace \leq h(x^k+s^k)\leq \sum_{j=1}^{m}\max\left\lbrace \csk+\efPB(\dpl)^2,0 \right\rbrace=:\usk(x^k+s^k)\nonumber
	\end{equation}
\end{proposition}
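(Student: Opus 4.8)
The plan is to reduce the claimed two-sided bounds to a single termwise estimate and then exploit monotonicity of the positive-part function. First I would invoke Definition~\ref{epsilonAccDef}: since $\cok$ is an $\efPB$-accurate estimate of $c_j(x^k)$ for the given $\dpl$, one has $\abs{\cok-c_j(x^k)}\leq\efPB(\dpl)^2$, i.e.\ $\cok-\efPB(\dpl)^2\leq c_j(x^k)\leq\cok+\efPB(\dpl)^2$ for every $j\in J$; the same chain holds with $\csk$ and $c_j(x^k+s^k)$ in place of $\cok$ and $c_j(x^k)$.

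Next I would use the elementary fact that $t\mapsto\max\{t,0\}$ is nondecreasing on $\R$. Composing this monotone map with the chain of inequalities above gives, for each $j\in J$,
$$\max\left\lbrace\cok-\efPB(\dpl)^2,0\right\rbrace\leq\max\left\lbrace c_j(x^k),0\right\rbrace\leq\max\left\lbrace\cok+\efPB(\dpl)^2,0\right\rbrace,$$
and analogously for the trial point $x^k+s^k$. Summing these over $j=1,\dots,m$ and using that $x^k\in\X$, so that $h(x^k)=\sum_{j=1}^m\max\{c_j(x^k),0\}$ by the definition of the constraint violation function, yields precisely $\lok(x^k)\leq h(x^k)\leq\uok(x^k)$; the identical computation with $x^k+s^k\in\X$ produces the second pair of inequalities.

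I do not expect any genuine obstacle here: the statement is a direct consequence of the $\efPB$-accuracy of the estimates, the monotonicity of the positive part, and the finite additivity of the sum defining $h$. The only points worth stating carefully are that the $\efPB$-accuracy bound is assumed to hold simultaneously for all $j\in J$ (so that the $m$ termwise inequalities may legitimately be added), and that the hypotheses $x^k\in\X$ and $x^k+s^k\in\X$ are exactly what guarantee that $h(x^k)$ and $h(x^k+s^k)$ are finite and equal to the displayed sums of positive parts, rather than $+\infty$.
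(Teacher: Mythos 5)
Your proposal is correct and follows essentially the same route as the paper: apply the $\efPB$-accuracy bound from Definition~\ref{epsilonAccDef} to each $c_j$, pass the resulting two-sided inequality through $t\mapsto\max\{t,0\}$ termwise, and sum over $j=1,\dots,m$. Your explicit remarks on the monotonicity of the positive part and on the role of the hypotheses $x^k, x^k+s^k\in\X$ merely make precise what the paper leaves implicit.
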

\begin{proof}
	The result is shown for $h(x^k)$ but the proof for $h(x^k+s^k)$ is the same. Since $\cok$ is an $\efPB$-accurate estimate of $c_j(x^k)$ for all {\bl $j\in J$}, then it follows from Definition~\ref{epsilonAccDef} that
	\begin{equation}
	\cok-\efPB(\dpl)^2\leq c_j(x^k)\leq \cok+\efPB(\dpl)^2, \quad\text{for all}\  {\bl j\in J},\nonumber
	\end{equation}
	which implies that 
	\begin{equation}\label{cjeps}
	\max\left\lbrace \cok-\efPB(\dpl)^2,0 \right\rbrace \leq \max\accolade{c_j(x^k),0}\leq \max\left\lbrace \cok+\efPB(\dpl)^2,0 \right\rbrace.
	\end{equation}
	Finally, summing each term of~\eqref{cjeps} from {\bl $j=1$ to $m$} leads to~\eqref{epspr}.
\end{proof}
\begin{definition}
	The estimates $\lokx$ and $\uokx$ of {\em Proposition~\ref{boundh}}{\bl ,} satisfying $\lok(x^k)\leq h(x^k)\leq \uok(x^k)${\bl ,} are said to be $\efPB$-reliable bounds for $h(x^k)$. Similarly, the estimates $\lskx$ and $\uskx$ satisfying $\lskx \leq h(x^k+s^k)\leq\uskx$ are said to be $\efPB$-reliable bounds for $h(x^k+s^k)$.
\end{definition}

The following result {\bl provides} sufficient information to identify a decrease in $h$ {\bl and} will {\bl be also} useful to determine an iteration type in Section~\ref{descriptionAlgo}.  
\begin{proposition}\label{successh}
	Let $\lokx$ and $\uokx$ be $\efPB$-reliable bounds for $h(x^k)$, and let $\lskx$ and $\uskx$ be $\efPB$-reliable bounds for $h(x^k+s^k)$. Let $\hok$ and $\hsk$ be the estimated constraint violations at $x^k$ and $x^k+s^k\in\X${\bl ,} respectively. Let $\gamma>2$ be {\bl a constant}. Then the following holds:
	\begin{equation}\label{five}
	\text{if }\ \hsk -\hok \leq -\gamma m\efPB(\dpl)^2,\ \ \text{then}\ \ h(x^k+s^k)-h(x^k)\leq -(\gamma-2) m\efPB(\dpl)^2<0.
	\end{equation}
\end{proposition}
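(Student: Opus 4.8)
\noindent\emph{Proof strategy.} The plan is to chain two ingredients. The first is Proposition~\ref{boundh} itself, which supplies the one-sided bounds $h(x^k)\geq\lokx$ and $h(x^k+s^k)\leq\uskx$. The second, and only mildly non-obvious, ingredient is that these bounds differ from the plain estimated violations $\hok$ and $\hsk$ by at most $m\efPB(\dpl)^2$; this follows from the fact that $t\mapsto\max\{t,0\}$ is $1$-Lipschitz, since for every $j\in J$ one has $\max\{\csk+\efPB(\dpl)^2,0\}\leq\max\{\csk,0\}+\efPB(\dpl)^2$ and $\max\{\cok-\efPB(\dpl)^2,0\}\geq\max\{\cok,0\}-\efPB(\dpl)^2$, and summing over $j=1,\dots,m$ gives
\[
\uskx\ \leq\ \hsk+m\efPB(\dpl)^2
\qquad\text{and}\qquad
\lokx\ \geq\ \hok-m\efPB(\dpl)^2 .
\]

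First I would combine these with Proposition~\ref{boundh} to obtain $h(x^k+s^k)\leq\hsk+m\efPB(\dpl)^2$ and $h(x^k)\geq\hok-m\efPB(\dpl)^2$. Subtracting the second from the first yields
\[
h(x^k+s^k)-h(x^k)\ \leq\ \hsk-\hok+2m\efPB(\dpl)^2 .
\]
Then I would plug in the hypothesis $\hsk-\hok\leq-\gamma m\efPB(\dpl)^2$, which bounds the right-hand side above by $-(\gamma-2)m\efPB(\dpl)^2$, and conclude by noting that $\gamma>2$, $\efPB>0$ and $\dpl>0$ make this quantity strictly negative. This is exactly~\eqref{five}.

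There is no genuinely hard step; the one thing to get right is the bookkeeping of the errors. Each of the two $\efPB$-reliable bounds used (the upper bound $\uskx$ at the trial point and the lower bound $\lokx$ at the current point) contributes a one-sided error of at most $m\efPB(\dpl)^2$, so the gap between the estimated decrease $\hsk-\hok$ and the true decrease $h(x^k+s^k)-h(x^k)$ is at most $2m\efPB(\dpl)^2$; the constant $\gamma>2$ is precisely what is needed to absorb this gap and still leave a strictly negative true decrease. The only subtlety worth flagging is that the strict inequality in~\eqref{five} tacitly uses $\dpl>0$, which is ensured by the algorithm's poll size parameter introduced in Section~\ref{descriptionAlgo}.
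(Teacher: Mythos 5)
Your proof is correct and follows essentially the same route as the paper's: invoke Proposition~\ref{boundh}, use the $1$-Lipschitz property of $t\mapsto\max\{t,0\}$ summed over $j=1,\dots,m$ to relate the reliable bounds to $\hok$ and $\hsk$ within $m\efPB(\dpl)^2$ each, and absorb the resulting $2m\efPB(\dpl)^2$ slack with the hypothesis and $\gamma>2$. The remark that strict negativity relies on $\dpl>0$ is a fair (if minor) observation that the paper leaves implicit.
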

\begin{proof}
	It follows from Proposition~\ref{boundh} that 
	\begin{equation}\label{hdif}
	h(x^k+s^k)-h(x^k)\leq \sum_{j=1}^{m}\max\left\lbrace \csk+\efPB(\dpl)^2,0 \right\rbrace-\sum_{j=1}^{m}\max\left\lbrace \cok-\efPB(\dpl)^2,0 \right\rbrace.
	\end{equation}
	By noticing that \[\sum_{j=1}^{m}\max\left\lbrace \csk+\efPB(\dpl)^2,0 \right\rbrace\leq  \sum_{j=1}^{m}\max\left\lbrace \csk,0 \right\rbrace +m\efPB(\dpl)^2=\hsk+ m\efPB(\dpl)^2 \]
	\[\sum_{j=1}^{m}\max\left\lbrace \cok-\efPB(\dpl)^2,0 \right\rbrace\geq  \sum_{j=1}^{m}\max\left\lbrace \cok,0 \right\rbrace -m\efPB(\dpl)^2=\hok- m\efPB(\dpl)^2,\]
	then it follows from~\eqref{hdif} that
	\begin{equation}
	h(x^k+s^k)-h(x^k)\leq \hsk-\hok + 2m\efPB(\dpl)^2\leq -(\gamma-2) m\efPB(\dpl)^2,\nonumber
	\end{equation}
	where the last inequality follows from the assumption that $\hsk -\hok \leq -\gamma m\efPB(\dpl)^2$. The proof is complete by noticing that $\gamma>2$.
\end{proof}

As in~\cite{AuDe09a}, the present research also introduces a nonnegative barrier threshold $\hkm=\uok(\xki)$, where $\xki$ is a so-called $\efPB$-infeasible solution. Definition~\ref{feasInfeaspoints} presents $\efPB$-infeasible points and the updating rules of $\xki$ is presented in Section~\ref{descriptionAlgo}. While $\xki$ is updated at the end of each iteration of StoMADS-PB, $\hkm$ is rather computed at the beginning of iterations in order to avoid keeping its possibly inaccurate values from one iteration to another. In fact, estimates in StoMADS-PB are always computed at the beginning of the iterations and their accuracy is improved compared to previous iterations {\bl as seen} in Section~\ref{computationPB}. Consequently, even though the sequence $\{\hkm\}_{k\in\N}$ has a globally decreasing tendency, it is not nonincreasing as in MADS with PB, but can possibly increase between successive iterations. The goal of StoMADS-PB {\bl is} to accept only the trial points satisfying $h(x^k)\leq\hkm$, {\bl and} any trial point $x^k$ for which the inequality $\uokx\leq\hkm$ does not hold is discarded from consideration since such an inequality implies that $h(x^k)\leq\hkm$ due to~\eqref{epspr}. {\bl However, this is} a sufficient acceptance condition since $\uokx>\hkm$ does not necessarily imply that $h(x^k)\leq\hkm$ does not hold, but rather leads to a situation of uncertainty which is not explicitly distinguished in the present manuscript {\bl for the sake of simplicity.}

The $\efPB$-reliable upper bound $\uokx$ previously obtained for $h(x^k)$ {\bl also allows} to determine the feasibility with respect to the relaxable constraints of a given trial point $x^k\in\X$. Indeed,
it obviously follows from~\eqref{epspr} that $h(x^k)=0$ if $\uokx=0$, which is satisfied provided that $\cokx\leq -\efPB(\dpl)^2$, for all {\bl $j\in J$}. This means that {\bl in order for $h(x^k)=0$} to hold, all the estimates of constraint function values must be sufficiently negative and not simply zero. By means of the following definition, {\bl StoMADS-PB partitions} the trial points into so-called $\efPB$-{\it feasible} and $\efPB$-{\it infeasible} points.
\begin{definition}\label{feasInfeaspoints}
	Let $x^k\in\X$ be any trial point and $\uokx$ be an $\efPB$-reliable upper bound for $h(x^k)$. Then $x^k$ is called $\efPB$-feasible if $\uokx=0$, and it is called  $\efPB$-infeasible if $\ 0<\uokx\leq \hkm$. Similarly, $x^k+s^k\in\X$ is called $\efPB$-feasible if $\uskx=0$, and it is called  $\efPB$-infeasible if $\ 0<\uskx\leq \hkm$. 
\end{definition}
StoMADS-PB does not require that the starting {\bl point is} $\efPB$-feasible. The algorithm can be applied to any problem satisfying only the following {\bl assumption} adapted from~\cite{AuDe09a}.
\begin{assumption}\label{assumptA1}
	There exists some point $x^0\in\mathcal{X}$ such that $f^0_0(x^0)$ and $u^0_0(x^0)$ are both finite, and $u^0_0(x^0)\leq h^0_{\max}$.
\end{assumption}
The next result similar to that in~\cite{audet2019stomads} provides a sufficient information to identify a decrease in~$f$ {\bl and also allows} to determine an iteration type in Section~\ref{descriptionAlgo}.
\begin{proposition}\label{fdcrease}
	Let $f_0^k$ and $f_s^k$ be $\efPB$-accurate estimates of $f(x^k)$ and $f(x^k+s^k)$, respectively, for $x^k$ and $x^k+s^k\in\X$. Let $\gamma>2$ be {\bl a constant}. Then the following holds:
	\begin{equation}
	\text{if }\ \fsk -\fok \leq -\gamma\efPB(\dpl)^2,\ \ \text{then}\ \ \fd(x^k+s^k)-\fd(x^k)\leq-(\gamma-2)\efPB(\dpl)^2 <0.\label{successf}
	\end{equation}
\end{proposition}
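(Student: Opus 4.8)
The plan is to follow the template of Proposition~\ref{successh}, the situation here being in fact simpler because $f$ is scalar-valued, so there is neither an aggregation over the $m$ constraints nor any $\max$ operation to handle. First I would apply Definition~\ref{epsilonAccDef} to each of the two estimates: since $\fok$ is an $\efPB$-accurate estimate of $f(x^k)$ and $\fsk$ is an $\efPB$-accurate estimate of $f(x^k+s^k)$ for the given $\dpl$, we get $\abs{\fok-f(x^k)}\leq\efPB(\dpl)^2$ and $\abs{\fsk-f(x^k+s^k)}\leq\efPB(\dpl)^2$.

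Next I would retain only the one-sided consequences of these bounds that are relevant for bounding the increment $f(x^k+s^k)-f(x^k)$ from above, namely $f(x^k+s^k)\leq\fsk+\efPB(\dpl)^2$ and $-f(x^k)\leq-\fok+\efPB(\dpl)^2$. Adding these two inequalities gives $f(x^k+s^k)-f(x^k)\leq \fsk-\fok+2\efPB(\dpl)^2$. Substituting the hypothesis $\fsk-\fok\leq-\gamma\efPB(\dpl)^2$ then yields $f(x^k+s^k)-f(x^k)\leq-(\gamma-2)\efPB(\dpl)^2$, and since $\gamma>2$ this quantity is strictly negative, which is exactly~\eqref{successf}.

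There is no real obstacle in this argument; the only point demanding care is the sign bookkeeping when combining the two absolute-value estimates --- using the $+\efPB(\dpl)^2$ side at $x^k+s^k$ and the $-\efPB(\dpl)^2$ side at $x^k$ --- so that precisely the additive slack $2\efPB(\dpl)^2$ appears, after which the condition $\gamma>2$ is exactly what turns the sufficient-decrease condition on the estimates $\fok,\fsk$ into an actual decrease of the true objective $f$ at the trial point.
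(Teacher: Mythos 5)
Your argument is correct and is essentially the same as the paper's: the paper simply writes the telescoping identity $f(x^k+s^k)-f(x^k)=\left[f(x^k+s^k)-\fsk\right]+\left(\fsk-\fok\right)+\left[\fok-f(x^k)\right]$ and invokes Definition~\ref{epsilonAccDef}, which is exactly your addition of the two one-sided $\efPB$-accuracy bounds yielding the slack $2\efPB(\dpl)^2$. Nothing further is needed.
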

\begin{proof}
	The proof {\bl follows from} Definition~\ref{epsilonAccDef} and the next equality
	\[\fd(x^k+s^k)-\fd(x^k)=\fd(x^k+s^k)-\fsk+\left(\fsk-\fok\right)+\fok-\fd(x^k).\] 
\end{proof}

\subsection{The StoMADS-PB algorithm and parameter update}\label{descriptionAlgo}
Recall first that MADS with PB is an iterative algorithm where every iteration comprises two main steps: an optional step called the SEARCH, {\bl and the POLL}. The SEARCH which typically consists of a global exploration may use a plethora of strategies like those based on interpolatory models, heuristics and surrogate functions or simplified physics models~\cite{AuDe09a} to explore the variables space. Each iteration of StoMADS-PB can also allow a SEARCH step, but it is not shown here for simplicity. Similarly to MADS with PB, the POLL step of StoMADS-PB is more rigidly defined unlike the freedom of the SEARCH and consists of a local exploration. During each of these two steps, a finite number of trial points is generated on an underlying {\it mesh} $\Mk$. The mesh is a discretization of the variables space, whose coarseness or fineness is controlled by a {\bl mesh size parameter} $\dm$ thus deviating from the notation $\Delta^m_k$ from~\cite{AuDe09a}, since uppercase letters will be used to denote random variables. For the remainder of the manuscript, $s^k=\dm d^k$ where $d^k$ is a nonzero direction around $x^k\in\Mk$. The POLL step is governed by the {\bl poll size parameter} $\dpl$ which is linked to $\dm$ by $\dm=\min \{\dpl,(\dpl)^2\}$~\cite{AuHa2017}. As specified earlier, $\{\dpl\}_{k\in\N}$ will play the role of the sequence of nonnegative real numbers introduced in Definition~\ref{epsilonAccDef}. Let $\hat{z}\in\N$ be a large fixed integer and $\tau\in (0,1)\cap\mathbb{Q}$ be a fixed rational constant. {\bl For the needs} of Section~\ref{convAnalysisPB}, note also that as in~\cite{audet2019stomads}, $\dpl$ is supposed to be bounded above by the positive and fixed constant $\dmax$ in order for the random poll size parameter $\Dp$ introduced in Section~\ref{subRandom} to be integrable. The definitions of the mesh $\Mk$ and the POLL set $\mathcal{P}^k$ inspired from~\cite{AuDe09a} are given next.
\begin{definition}\label{meshpollset}
	Let $\mathbf{D}\in\R^{n\times p}$ be a matrix, with columns denoted by the set $\mathbb{D}$ which form a positive spanning set. At the beginning of iteration~$k$, let $\xki$ and $\xkf$ denote respectively the $\efPB$-infeasible and the $\efPB$-feasible incumbent solutions (there might be only one), and let $\Vk:=\{\xki,\xkf\}$ be the set of such incumbents. The mesh $\M^k$ and the POLL set $\mathcal{P}^k$ are respectively
	\[\mathcal{M}^k:= \{x^k+\dm d: x^k\in\Vk,\ d=\mathbf{D}y,\ y\in \Z^p\}\quad\text{and}\quad \mathcal{P}^k := \mathcal{P}^k(\xki)\cup\mathcal{P}^k(\xkf), \]
	where $\forall x^k\in\mathcal{M}^k\cap\X$, $\mathcal{P}^k(x^k)=\{x^k+\dm d^k\in \mathcal{M}^k\cap\X: \dm\norminf{d^k}\leq \dpl b,\ d^k\in\mathbb{D}^k_p(x^k)\}$ is called a frame around $x^k$, with $b=\max\{\norminf{d'}, d'\in \mathbb{D}\}$. $\mathbb{D}^k_p(x^k)$ is a positive spanning set which is said to be a set of {\bl frame} directions around~$x^k$. 
	The set $\mathbb{D}^k_p$ of all polling directions at iteration $k$ is defined by $\mathbb{D}^k_p:=\mathbb{D}^k_p(\xki)\cup\mathbb{D}^k_p(\xkf)$. When there is no incumbent $\efPB$-feasible solution $\xkf$, then the set $\Vk$ is reduced to $\{\xki\}$, in which case $\mathcal{P}^k = \mathcal{P}^k(\xki)$ and $\mathbb{D}^k_p=\mathbb{D}^k_p(\xki)$.
\end{definition}

After the POLL step is completed, {\bl StoMADS-PB computes} not only estimates $\fok$, $\fsk$, $\hok$ and $\hsk$ of $f(x^k)$, $f(x^k+s^k)$, $h(x^k)$ and $h(x^k+s^k)$, respectively at trial points $x^k\in \Vk$ and $x^k+s^k\in\Pk$, but also upper bounds $\uskx$ and $\uok(\xki)$, respectively for $h(x^k+s^k)$ and $h(\xki)$. The values of such estimates and bounds determine the iteration type of the algorithm
and govern also the way $\dpl$ is updated. Recall Definition~\ref{feasInfeaspoints} of $\efPB$-feasible and $\efPB$-infeasible points 
at the beginning of iteration~$k$. The incumbent solutions $\xki$ and $\xkf$ are constructed by ranking trial mesh points of $\X$, making use of {\bl the dominance} notion inspired from~\cite{AuDe09a}.

\begin{definition}
	The $\efPB$-feasible point $x^k+s^k$ is said to dominate the $\efPB$-feasible point $x^k$, denoted $x^k+s^k \domf x^k$, when $\fsk-\fok\leq-\gamma\efPB(\dpl)^2$, with $\uskx=0$.\\
	The $\efPB$-infeasible point $x^k+s^k$ is said to dominate the $\efPB$-infeasible point $x^k$, denoted $x^k+s^k \domh x^k$, when $\fsk-\fok\leq-\gamma\efPB(\dpl)^2$ and $\hsk-\hok\leq-\gamma m\efPB(\dpl)^2$, with $\ 0<\uskx\leq \hkm$.
\end{definition}

Adapting the terminologies {\bl from}~\cite{AuDe09a} and depending on the values of the aforementioned estimates and bounds, there are four StoMADS-PB iterations types: an iteration can be either $f$-Dominating, $h$-Dominating (the former and the latter are referred to as {\bl dominating} iterations), Improving, or Unsuccessful. During a dominating iteration, either the algorithm has found a first $\efPB$-feasible iterate or a trial point that dominates an incumbent is generated. An iteration which is Improving is not dominating but it aims to improve the feasibility of the $\efPB$-infeasible incumbent. Unsuccessful iterations are neither dominating nor improving. 
\begin{itemize}
	\item At the beginning of iteration $k$, if there is no available $\efPB$-feasible solution, then the iteration is called $f$-Dominating if for $x^k\in\Vk$, a first trial point $x^k+s^k\in\Pk$ satisfying $\uskx=0$ is found, in which case  $h(x^k+s^k)=0$ due to Proposition~\ref{boundh}, meaning that $x^k+s^k$ is $\efPB$-feasible. Otherwise, if an $\efPB$-feasible point that dominates the incumbent is generated, i.e., $x^k+s^k \domf \xkf$ for some $x^k\in\Vk$, then the inequality $\fsk(x^k+s^k)-\fok(\xkf)\leq-\gamma\efPB(\dpl)^2$ leads to a decrease in $f$ due to Proposition~\ref{fdcrease}. In either case, $\xkfun:=x^k+s^k$ and $\delta_p^{k+1}= \min\{\tau^{-1}\dpl,\dmax\}$. The $\efPB$-infeasible incumbent $\xki$ is not updated since there is no feasibility improvement.
	\item Iteration $k$ is said to be $h$-Dominating whenever an $\efPB$-infeasible point that dominates the incumbent is generated, i.e., $\xki+s^k \domh \xki$, which means that both inequalities $\fsk(\xki+s^k)-\fok(\xki)\leq-\gamma\efPB(\dpl)^2$ and $\hsk(\xki+s^k)-\hok(\xki)\leq-\gamma m\efPB(\dpl)^2$ hold. Consequently, it follows from Propositions~\ref{successh} and~\ref{fdcrease} that  decreases occur both in $f$ and $h$. In this case, $\xkfun=\xkf$ and since feasibility is improved, $\xkiun$ is set to equal $\xki+s^k$ while the poll size parameter is updated as at $f$-Dominating iterations.
	\item Iteration $k$ is said to be Improving if it is not dominating but there is at least one $\efPB$-infeasible point $\xki+s^k$ satisfying $\hsk(\xki+s^k)-\hok(\xki)\leq-\gamma m\efPB(\dpl)^2$. Indeed, this means that $\xki+s^k$ improves the feasibility of the $\efPB$-infeasible incumbent $\xki$ since the previous inequality leads to a decrease in~$h$ due to Proposition~\ref{successh}. In this case, $\dpl$ is updated as in dominating iterations, $\xkfun=\xkf$ while the $\efPB$-infeasible incumbent is updated according to
	\begin{equation}
	\xkiun\in\underset{{\xki+s^k}}{\arg\!\min} \accolade{\usk(\xki+s^k): \hsk(\xki+s^k)-h_0^k(\xki)\leq -\gamma m\efPB(\dpl)^2}.\nonumber
	\end{equation}
	
	\item Finally, an iteration is called Unsuccessful if it is neither dominating nor Improving. In this case, $\delta_p^{k+1}=\tau\dpl$ while neither $\xki$ nor $\xkf$ are updated.
\end{itemize}

\begin{remark}\label{remarkFCF}
	Denote by $t>0$ the number of the first $f$-Dominating iteration of Algorithm~\ref{algoPB} and assume that $t<+\infty$. Then it is easy to notice that $\xkf=\xzeroi$ for all $k=0,1,\dots,t$ while $x^{t+1}_{\textnormal{feas}}\neq \xzeroi$. Moreover, even though estimates $\fok(\xkf)$, $\fsk(\xkf+s^k)$, $\hok(\xkf)$ and $\hsk(\xkf+s^k)$ are computed at $\xkf$ and $\xkf+s^k\in\Pk$ respectively for all $k\leq t$, they are not used by the algorithm until the end of iteration $t$ and it can also be noticed that no point in $\Pk$ that is generated using $\mathbb{D}^k_p(\xkf)$ is evaluated until the end of iteration $t$. In fact, setting the initial $\efPB$-feasible guess to equal $\xzeroi$ as it is in Algorithm~\ref{algoPB} and then computing the latter estimates are not necessary in practice. However, doing so allows simply the aforementioned estimates to be defined for all $k\geq 0$ for theoretical needs, specifically the construction of the $\sigma$-algebra $\fcf$ in Section~\ref{sec3}. 
\end{remark}

\subsection{Frame center selection rule}\label{frameSelection}
Before describing the frame center selection rule, recall the set $\Vk$ of incumbent solutions introduced in Definition~\ref{meshpollset} and the fact that POLL trial points are generated inside frames around such incumbents At a given iteration, there are either one or two frame centers in $\Vk$. When $\Vk$ contains only one point, then using terminologies from~\cite{AuDe09a}, that point is called the {\bl primary} frame center. In the event that there are two incumbent solutions $\xki$ and $\xkf$, one of them is chosen as {\bl the} {\bl primary} frame center while the other one is the {\bl secondary} frame center. The primary frame center in~\cite{AuDe09a} is chosen to be the infeasible incumbent solution while the secondary frame center is the feasible incumbent whenever $f_k^F-\rho > f_k^I$, where the positive scalar $\rho$ is the so called {\bl frame center trigger}, $f_k^F$ and $f_k^I$ are respectively the incumbent feasible and infeasible $f$-values at iteration $k$. Otherwise if the previous inequality does not hold, the primary and secondary frame centers {\bl are the} feasible and infeasible incumbent solutions. Because of the unavailability of $f$ function values for StoMADS-PB, a specific frame center selection strategy using estimates of such function values {\bl is proposed and relies} on the following result.
\begin{proposition}\label{trigger}
	Let $\fok(\xkf)$ and $\fok(\xki)$ be $\efPB$-accurate estimates of $f(\xkf)$ and $f(\xki)$ respectively. Let $\rho>0$ be a scalar. 
	\begin{equation}
	\text{If}\ \ \fok(\xkf)-\rho>\fok(\xki)+2\efPB(\dpl)^2,\ \ \text{then}\ \ f(\xkf)-\rho>f(\xki).
	\end{equation} 
\end{proposition}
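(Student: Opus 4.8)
The plan is to invoke Definition~\ref{epsilonAccDef} twice to replace the estimates $\fok(\xkf)$ and $\fok(\xki)$ by the true function values $f(\xkf)$ and $f(\xki)$, paying a penalty of $\efPB(\dpl)^2$ in each case, and then to check that the slack of $2\efPB(\dpl)^2$ built into the hypothesis exactly absorbs the two penalties.

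First I would use the $\efPB$-accuracy of $\fok(\xkf)$ to get the one-sided bound $f(\xkf)\geq \fok(\xkf)-\efPB(\dpl)^2$, and the $\efPB$-accuracy of $\fok(\xki)$ to get $f(\xki)\leq \fok(\xki)+\efPB(\dpl)^2$. Subtracting $\rho$ from the first inequality and chaining it with the hypothesis $\fok(\xkf)-\rho>\fok(\xki)+2\efPB(\dpl)^2$ yields
\[
f(\xkf)-\rho \;\geq\; \fok(\xkf)-\rho-\efPB(\dpl)^2 \;>\; \fok(\xki)+2\efPB(\dpl)^2-\efPB(\dpl)^2 \;=\; \fok(\xki)+\efPB(\dpl)^2 \;\geq\; f(\xki),
\]
which is the desired conclusion $f(\xkf)-\rho>f(\xki)$.

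There is no real obstacle here; the only thing to be careful about is tracking the direction of each one-sided estimate bound (lower bound on $f(\xkf)$, upper bound on $f(\xki)$) so that the two $\efPB(\dpl)^2$ terms add up to the $2\efPB(\dpl)^2$ appearing in the hypothesis rather than cancelling or doubling incorrectly. This mirrors the argument used in the proof of Proposition~\ref{fdcrease}, where the same "insert and remove the estimate" telescoping trick is applied.
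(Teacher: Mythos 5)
Your proof is correct and uses essentially the same argument as the paper: the paper writes $f(\xki)-f(\xkf)$ as the three-term telescoping sum $\left[f(\xki)-\fok(\xki)\right]+\left[\fok(\xki)-\fok(\xkf)\right]+\left[\fok(\xkf)-f(\xkf)\right]$ and bounds it by $2\efPB(\dpl)^2-(\rho+2\efPB(\dpl)^2)=-\rho$, which is exactly your chain of one-sided accuracy bounds rearranged. No gap; the two presentations are equivalent.
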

\begin{proof}
	Assume that $\fok(\xkf)-\rho>\fok(\xki)+2\efPB(\dpl)^2$. Then, it follows from the $\efPB$-accuracy of $\fok(\xkf)$ and $\fok(\xki)$ that 
	\begin{eqnarray}
	f(\xki)-f(\xkf)&=& \left[f(\xki)-\fok(\xki)\right]+\left[\fok(\xki)-\fok(\xkf)\right]+\left[\fok(\xkf)-f(\xkf)\right]\nonumber \\
	&<& 2\efPB(\dpl)^2-(\rho+2\efPB(\dpl)^2)=-\rho.
	\end{eqnarray} 
\end{proof}
Thus according to Proposition~\ref{trigger}, $\xkf$ is always chosen as {\bl the} StoMADS-PB primary frame center unless the estimates $\fok(\xkf)$ and $\fok(\xki)$ satisfy a sufficient decrease condition leading to the inequality $f(\xkf)-\rho>f(\xki)$, which as in~\cite{AuDe09a} allows  the choice of the infeasible incumbent solution as primary frame center.

As in~\cite{AuDe09a}, StoMADS-PB as implemented for the computational study in Section~\ref{sec5} places less effort in polling around the secondary frame center than the primary one. Specifically, the default strategy is to use a {\bl maximal positive basis}~\cite{AuHa2017} for the primary frame center and only two directions with one being the negative of the first for the secondary frame center.

\begin{figure*}[ht!]
	\begin{algorithm}[H]
		\caption{StoMADS-PB}
		\label{algoPB} 
		\textbf{[0] Initialization}\\
		\hspace*{10mm}choose $\xzeroi\in\X$, $\delta_p^0>0$, $\tau\in(0,1)\cap\mathbb{Q}$, $\efPB>0$, $\gamma>2$ and  $\hat{z}\in\N^*$\\ 
		\hspace*{10mm}set the feasibility success ${flag}$ = {\small FALSE}, $\mathcal{V}^0 \gets\{\xzeroi \}$ and $x^0_{\textnormal{feas}}\gets \xzeroi$ \\
		\hspace*{10mm}set the iteration counter $k \gets 0$\\
		\textbf{[1] Parameter Update}\\
		\hspace*{10mm}set $\delta^k_m\gets\min \{\dpl,(\dpl)^2\}$\\
		\textbf{[2] Poll}\\
		\hspace*{10mm}generate a finite list $\mathcal{P}^k$ of candidates using the polling directions $\mathbb{D}^k_p(\xki)\cup\mathbb{D}^k_p(\xkf)$ \\  
		\hspace*{10mm}obtain estimates $\fok,\fsk,\hok$ and $\hsk$ of $f(x^k),f(x^k+s^k), h(x^k)$ and $h(x^k+s^k)$\\
		\hspace*{10mm}respectively, at $x^k\in \mathcal{V}^k\cup\{\xkf\}$, $\ x^k+s^k\in\mathcal{P}^k$, then compute bounds $u^k_s(x^k+s^k)$ \\
		\hspace*{10mm}and $\uok(\xki)$, using blackbox evaluations\\
		\hspace*{10mm}set the barrier threshold $\hkm\gets u_0^k(\xki)$\\
		\hspace*{10mm}\textbf{$f$-Dominating}\\
		\hspace*{10mm}if ${flag}$ = {\small FALSE} and $u^k_s(x^k+s^k)=0$ or ${flag}$ = {\small TRUE} and $x^k+s^k\domf\xkf$\\
		\hspace*{10mm}for some $x^k\in \mathcal{V}^k$ and $s^k\in \{\dm d^k: d^k\in \mathbb{D}^k_p(x^k)\}$\\
		\hspace*{10mm}set $\xkiun\gets\xki$, $\xkfun\gets x^k+s^k$ and $\delta_p^{k+1}\gets{\blc \min\{\tau^{-1}\dpl,\dmax\}}$  \\
		\hspace*{10mm}reset the feasibility success ${flag}$ = {\small TRUE}, set $\mathcal{V}^{k+1} \gets\{\xkiun, \xkfun\}$ and go to \textbf{[4]}  \\
		\hspace*{10mm}\textbf{$h$-Dominating}\\
		\hspace*{10mm}else if $\xki+s^k\domh\xki$ for some $s^k\in \{\dm d^k: d^k\in \mathbb{D}^k_p(\xki)\}$ \\
		\hspace*{10mm}set $\xkiun\gets \xki+s^k$, $\xkfun\gets\xkf$ and  $\delta_p^{k+1}\gets{\blc \min\{\tau^{-1}\dpl,\dmax\}}$\\
		\hspace*{10mm}\textbf{Improving}\\
		\hspace*{10mm}else if $\hsk(\xki+s^k)-h_0^k(\xki)\leq -\gamma m\efPB(\dpl)^2$ for some previously evaluated $\xki+s^k$\\
		\hspace*{10mm}set $\xkiun\in\arg\!\min_{\xki+s^k}\{\usk(\xki+s^k): \hsk(\xki+s^k)-h_0^k(\xki)\leq -\gamma m\efPB(\dpl)^2\}$ \\
		\hspace*{10mm}$\xkfun\gets\xkf$ and $\delta_p^{k+1}\gets{\blc \min\{\tau^{-1}\dpl,\dmax\}}$\\
		\hspace*{10mm}\textbf{Unsuccessful}\\ 
		\hspace*{10mm}otherwise, set $\xkiun\gets\xki$, $\xkfun\gets\xkf$ and $\delta_p^{k+1}\gets\tau\dpl$ \\
		\textbf{[3] Feasibility update}\\
		\hspace*{10mm}if ${flag}$ = {\small TRUE}\\
		\hspace*{10mm}set $\mathcal{V}^{k+1} \gets\{\xkiun, \xkfun\}$\\
		\hspace*{10mm}otherwise, $\mathcal{V}^{k+1} \gets\{\xkiun\}$ \\
		\textbf{[4] Termination}\\
		\hspace*{10mm}if no termination criterion is met \\
		\hspace*{10mm}set $k\gets k+1$ and go to \textbf{[1]}\\
		\hspace*{10mm}otherwise stop
	\end{algorithm}
	\caption{\small{StoMADS-PB algorithm for constrained stochastic optimization.}}
\end{figure*}

\section{Stochastic process generated by StoMADS-PB}\label{sec3}

The stochastic quantities in the present work are all defined on the same probability space $(\Omega,\mathcal{G},\pb)$. The nonempty set $\Omega$ is referred to as the sample space and its subsets are called events. The collection $\mathcal{G}$ of such events is called a $\sigma$-algebra or $\sigma$-field and $\pb$ is a finite measure satisfying $\pb(\Omega)=1$, referred to as probability measure and defined on the measurable space $(\Omega,\mathcal{G})$. Each element $\omega\in\Omega$ is referred to as a sample point or a possible outcome. Let $\B(\rn)$ be the Borel $\sigma$-algebra of $\rn$, i.e., the one generated by its open sets. A random variable $X$ is a measurable map defined on $(\Omega,\mathcal{G},\pb)$ into the measurable space $(\rn,\B(\rn))$, where measurability means that each event $\{X\in B \}:=X^{-1}(B)$ belongs to $\mathcal{G}$ for all $B\in\B(\rn)$~\cite{bhattacharya2007basic,dzahini2020expected}. 

The estimates $\fokx$, $\fskx$, $\cokx$ and $\cskx$, for $j=1,2,\dots,m$, $x^k\in\{\xki,\xkf\}$ and $x^k+s^k\in\Pk$, of function values are computed at every iteration of Algorithm~\ref{algoPB} using the noisy blackbox evaluations. Because of the randomness of the blackbox outputs, such estimates can respectively be considered as realizations of random estimates $\Fokx$, $\Fskx$, $\Cokx$ and $\Cskx$, for $j=1,2,\dots,m$. Since each iteration $k$ of Algorithm~\ref{algoPB} is influenced by the randomness stemming from such random estimates, Algorithm~\ref{algoPB} results in a stochastic process. For the remainder of the manuscript, uppercase letters will be used to denote random quantities while their realizations will be denoted by lowercase letters. Thus, $x^k=X^k(\omega)$, $\xki=\Xki(\omega)$, $\xkf=\Xkf(\omega)$, $s^k=S^k(\omega)$, $\dpl=\Dp(\omega)$ and $\dm=\Dm(\omega)$ denote respectively realizations of $X^k$, $\Xki$, $\Xkf$, $S^k$, $\Dp$ and $\Dm$. Similarly, $\fokx=\Fokx(\omega)$, $\fskx=\Fskx(\omega)$, $\cokx=\Cokx(\omega)$, $\cskx=\Cskx(\omega)$, $\hokx=\Hokx(\omega)$, $\hskx=\Hskx(\omega)$, $\lokx=\Lok(X^k)(\omega)$, $\lskx=\Lsk(X^k+S^k)(\omega)$, $\uokx=\Uok(X^k)(\omega)$ and $\uskx=\Usk(X^k+S^k)(\omega)$. 
When there is no ambiguity, $\Fok$ will be used instead of $\Fokx$, etc. In general, following the notations in~\cite{audet2019stomads,blanchet2019convergence,chen2018stochastic,dzahini2020expected,paquette2018stochastic}, $\Fok$, $\Fsk$, $\Hok$ and $\Hsk$ are respectively the estimates of $f(X^k)$, $f(X^k+S^k)$, $h(X^k)$ and $h(X^k+S^k)$. Moreover, as highlighted in~\cite{audet2019stomads}, the notation ``$f(X^k)$'' is used to denote the random variable with realizations $f(X^k(\omega))$. 

The present research aims to show that the stochastic process $\left\lbrace\Xki,\Xkf,\Dp,\Dm,\Fok,\Fsk,\Hok,\Hsk,\right.$ $\left.\Lok,\Uok,\Lsk,\Usk\right\rbrace$ resulting from Algorithm~\ref{algoPB} converges with probability one under some assumptions on the estimates $\Fok,\Fsk,\Cok,\Csk,\Hok,\Hsk$ and on the bounds $\Lok,\Uok,\Lsk,\Usk$. In particular, the estimates $\Fok,\Fsk,\Cok$ and $\Csk$ will be assumed to be accurate while the bounds will be assumed to be reliable, with sufficiently high but fixed probabilities, {\bl conditioned on the past}.

\subsection{Probabilistic bounds and probabilistic estimates}\label{subRandom}

The previously mentioned notion of conditioning on the past is formalized following~\cite{audet2019stomads,blanchet2019convergence,chen2018stochastic,dzahini2020expected,paquette2018stochastic}. Denote by $\fcf$ the $\sigma$-algebra generated by $F_0^{\ell}(X^{\ell})$, $F_s^{\ell}(X^{\ell}+S^{\ell})$, $C_{j,0}^{\ell}(X^{\ell})$ and $C_{j,s}^{\ell}(X^{\ell}+S^{\ell})$, for $j=1,2,\dots,m$, for $X^{\ell}\in\accolade{X^{\ell}_{\inf},X^{\ell}_{\textnormal{feas}}}$ and for $\ell=0,1,\dots,k-1$. For completeness, $\mathcal{F}^{C\cdot F}_{-1}$ is set to equal $\sigma(x^0)=\sigma(\xzeroi)$. Thus, $\{\mathcal{F}^{C\cdot F}_k\}_{k\geq -1}$ is a filtration, i.e., a subsequence of increasing $\sigma$-algebras of $\mathcal{G}$.

Sufficient accuracy of functions estimates is measured using the poll size parameter and is formalized, following~\cite{audet2019stomads,blanchet2019convergence,chen2018stochastic,dzahini2020expected,paquette2018stochastic} by means of the definitions bellow.

\begin{definition}\label{jkDef}
	A sequence of random estimates $\{F^k_0,F^k_s\}$ is said to be $\beta$-probabilistically $\efPB$-accurate with respect to the corresponding sequence $\{X^k, S^k, \Dp\}$ if the events 
	\begin{equation}\label{event1PB}
	J_k=\{F^k_0, F^k_s,\ \text{are}\ \efPB\text{-accurate estimates of}\ f(x^k)\ \text{and}\ f(x^k+s^k),\ \text{respectively for}\ \Dp\}\nonumber
	\end{equation}
	satisfy the following submartingale-like condition
	\begin{equation}\label{beta1PB}
	\pr{J_k\ |\ \fcf}= \E{\ijk\ |\ \fcf}\geq \beta,\nonumber
	\end{equation}
	where $\ijk$ denotes the indicator function of the event $J_k$, i.e.,  $\ijk=1$ if $\omega\in J_k$ and $\ijk=0$  otherwise. The estimates are called ``good'' if $\ijk=1$. Otherwise they are called ``bad''.
\end{definition}

\begin{definition}\label{probConstr}
	A sequence of random estimates $\{\Cok,\Csk\}$ is said to be $\alpha^{1/m}$-probabilistically $\efPB$-accurate for some $j=1,2,\dots,m$ with respect to the corresponding sequence $\{X^k, S^k, \Dp\}$ if the events 
	\begin{equation}\label{eventJi}
	I_k^j=\{\Cok,\Csk,\ \text{are}\ \efPB\text{-accurate estimates of}\ c_j(x^k)\ \text{and}\ c_j(x^k+s^k),\ \text{respectively for}\ \Dp\}\nonumber
	\end{equation}
	satisfy the following submartingale-like condition
	\begin{equation}\label{alpha2m}
	\pr{I_k^j\ |\ \fcf}= \E{\iikj\ |\ \fcf}\geq \alpha^{1/m}.\nonumber
	\end{equation}
\end{definition}

To formalize the sufficient reliability of random bounds in the present work, the following definition is introduced.
\begin{definition}\label{ikDef}
	A sequence of random bounds $\{\Lok, \Uok,\Lsk, \Usk \}$ is said to be $\alpha$-probabilistically $\efPB$-accurate with respect to the corresponding sequence $\{X^k, S^k, \Dp\}$ if the events 
	\begin{eqnarray}\label{event2}
	I_k &=& \left\lbrace ``\Lok\ \text{and}\ \Uok\ \text{are}\ \efPB\text{-reliable bounds for}\ h(x^k) \text{'', and}\ `` \Lsk\ \text{and}\ \Usk\ \text{are}\ \efPB\text{-reliable bounds}\right.\nonumber\\
	& & \left. \text{for}\ h(x^k+s^k)\text{'', respectively for}\ \Dp\right\rbrace
	\end{eqnarray}
	satisfy the following submartingale-like condition
	\begin{equation}\label{alpha1}
	\pr{I_k\ |\ \fcf}=\E{\iik\ |\ \fcf}\geq\pr{\overset{m}{\underset{j=1}{\bigcap}}I_k^j\ |\ \fcf} \geq \alpha,\nonumber
	\end{equation}
	The bounds are called ``good'' if $\ \iik=1$. Otherwise, $\ \iik=0$ and they are called ``bad''.
\end{definition}
The $p$-integrability of random variables~\cite{audet2019stomads,bhattacharya2007basic} is defined below and will be useful for the analysis of Algorithm~\ref{algoPB}.
\begin{definition}\label{pIntegrableDefPB}
	Let $(\Omega,{\blc \mathcal{G}},\mathbb{P})$ be a probability space and $p\in [1,+\infty)$ be an integer. Then the Space $\lp$ of so-called $p$-integrable random variables is the set of all real-valued random variables $X$ such that
	\begin{equation}\label{pIntegrableIneqPB}
	\normp{X}:=\left(\int_{\Omega} \abs{X(\omega)}^p\pr{d\omega} \right)^{\frac{1}{p}}=\left(\E{\abs{X}^p} \right)^{\frac{1}{p}}<+\infty.\nonumber
	\end{equation}	
\end{definition}
As in~\cite{audet2019stomads}, the following is assumed in order for the random variables $f(X^k)$, $h(X^k)$ and $c_j(X^k)$, $j\in J$, to be integrable so that the conditional expectations $\E{f(X^k)|\fcf}$, $\E{c_j(X^k)|\fcf}$, $j\in J$ and $\E{h(X^k)|\fcf}$ can be well defined~\cite{bhattacharya2007basic}.

\begin{assumption}\label{lipschitzAssumption}
	The objective function $f$ and the constraints violation function $h$ are locally Lipschitz with constants $\lambda^f>0$ and $\lambda^h>0$, respectively. The constraint functions $c_j$, $j\in J$, are continuous on~$\X$. The set $\mathcal{U}\subset\X$ containing all iterates realizations is compact.
\end{assumption}
Local Lipschitz in the above assumption means, Lipschitz with a finite constant in some nonempty neighborhood intersected with $\X$~\cite{AuDe09a}.

\begin{proposition}\label{kappafmax}
	Under Assumption~\ref{lipschitzAssumption}, there exists a finite constant $\kappa^f_{\max}$ satisfying $\abs{f(x^k)}\leq \kappa^f_{\max}\ $ for all $x^k\in\U$. Moreover, the random variables $f(X^k)$, $h(X^k)$, $c_j(X^k)$ and $\Dp$ belong to $\mathbb{L}^1(\Omega,\mathcal{G},\pb)$, for all $j\in J$ and for all $k\geq 0$.
\end{proposition}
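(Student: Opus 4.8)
The plan is to derive the uniform bound on $f$ from compactness of $\U$ together with continuity, and then to obtain $\mathbb{L}^1$-membership of each listed random variable from the fact that it is surely bounded. First I would note that, by Assumption~\ref{lipschitzAssumption}, $f$ is locally Lipschitz on $\X$, hence continuous on $\X$ and in particular on the compact set $\U\subset\X$; therefore $|f|$ attains its maximum over $\U$, and setting $\kappa^f_{\max}:=\max_{x\in\U}|f(x)|<+\infty$ gives $|f(x^k)|\le\kappa^f_{\max}$ for every iterate realization $x^k\in\U$. The identical argument, applied to $h$ (locally Lipschitz on $\X$ by Assumption~\ref{lipschitzAssumption}, and finite on $\U\subset\X$) and to each $c_j$ (continuous on $\X$), yields finite constants $\kappa^h_{\max}:=\max_{x\in\U}h(x)$ and $\kappa^{c_j}_{\max}:=\max_{x\in\U}|c_j(x)|$ for $j\in J$.

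Next I would handle integrability. As recalled in Section~\ref{sec3}, $f(X^k)$, $h(X^k)$ and $c_j(X^k)$ are genuine random variables, because $X^k$ is measurable into $(\rn,\B(\rn))$ and $f$, $h$, $c_j$ are continuous on $\X$. Since Assumption~\ref{lipschitzAssumption} stipulates that every realization of every iterate lies in $\U$, we have $|f(X^k(\omega))|\le\kappa^f_{\max}$, $0\le h(X^k(\omega))\le\kappa^h_{\max}$ and $|c_j(X^k(\omega))|\le\kappa^{c_j}_{\max}$ for all $\omega\in\Omega$. Each of these random variables is therefore bounded, so its absolute expectation is no larger than the corresponding finite constant; hence $f(X^k)$, $h(X^k)$ and $c_j(X^k)$ belong to $\mathbb{L}^1(\Omega,\mathcal{G},\pb)$ for every $k\ge0$ and every $j\in J$.

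Finally, for $\Dp$ I would invoke the parameter-update rules of Section~\ref{descriptionAlgo}: at each iteration $\delta_p^{k+1}$ equals either $\tau\dpl$ or $\min\{\tau^{-1}\dpl,\dmax\}$, so, starting from $\delta_p^0$ with $0<\delta_p^0\le\dmax$, an immediate induction gives $0<\dpl\le\dmax$ for all $k$, consistently with the standing assumption of Section~\ref{descriptionAlgo}. Consequently $0<\Dp(\omega)\le\dmax$ for every $\omega$, so $\Dp$ is a random variable bounded by the fixed constant $\dmax$, which yields $\E{\abs{\Dp}}\le\dmax<+\infty$, i.e. $\Dp\in\mathbb{L}^1(\Omega,\mathcal{G},\pb)$. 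I do not anticipate any real obstacle here: the only delicate points are treating $h$ on $\U\subset\X$, where it is finite so that its continuity on $\X$ can be used, and tracing the uniform bound $\dpl\le\dmax$ back to the update step together with the assumption on $\delta_p^0$.
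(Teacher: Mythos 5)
Your proof is correct and follows essentially the same route as the paper's: boundedness of $f$, $h$ and $c_j$ on the compact set $\U$ via local Lipschitz continuity (respectively continuity), boundedness implying $\mathbb{L}^1$-membership, and $\Dp\le\dmax$ by the update rules giving integrability of $\Dp$. Your added justifications (attainment of the maximum, measurability, the induction for $\dpl\le\dmax$) merely make explicit what the paper takes for granted.
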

\begin{proof}
	The proof is inspired from~\cite{audet2019stomads}. Since $f$ is locally Lipschitz on the compact set $\U$, the it is bounded on $\U$. Consequently, there exists a finite constant $\kappa^f_{\max}$ such that $\abs{f(x^k)}\leq \kappa^f_{\max}\ $ for all $x^k\in\U$. Similarly, there exist $\kappa^h_{\max}$ satisfying $\abs{h(x^k)}\leq \kappa^h_{\max} $ and $\kappa^c_{\max}$ such that $\abs{c_j(x^k)}\leq \kappa^c_{\max}\ $ for all $j\in J$ and all $x^k\in\U$, since $h$ is locally Lipschitz and $c_j$ is continuous on $\U$. Thus, $\E{\abs{f(X^k)}}:=\int_{\Omega}\abs{f(X^k(\omega))} \mathbb{P}(d\omega)\leq \kappa^f_{\max}<+\infty$. Similarly, $\E{\abs{h(X^k)}}\leq \kappa^h_{\max}\leq+\infty$ and for all $j\in J$, $\E{\abs{c_j(X^k)}}\leq \kappa^c_{\max}\leq+\infty$. Finally, the integrability of $\Dp$ follows from the fact that $\Dp(\omega)\leq \dmax$ for all $\omega\in\Omega$, which implies that $\E{\abs{\Dp}}:=\int_{\Omega}\abs{\Dp(\omega)} \mathbb{P}(d\omega)\leq \dmax<+\infty$.
\end{proof}

Next are stated some key assumptions on the nature of the stochastic information in Algorithm~\ref{algoPB}, some of which are made in~\cite{audet2019stomads} and which will be useful for the convergence analysis of Section~\ref{convAnalysisPB}.

\begin{assumption}\label{randomEstim}
	For fixed $\alpha$ and $\beta\in (0,1)$, the followings hold for the random quantities generated by Algorithm~\ref{algoPB}. 
	\begin{itemize}
		\item[(i)] The sequence of estimates $\{\Fok,\Fsk\}$ generated by Algorithm~\ref{algoPB} is $\beta$-probabilistically $\efPB$-accurate.
		\item[(ii)] The sequence of estimates $\{\Fok,\Fsk\}$ generated by Algorithm~\ref{algoPB} satisfies the following variance condition for all $k\geq 0$,
		\begin{equation}\label{varcond1PB}
		\begin{split}
		& \E{\abs{\Fsk-\fd(X^k+S^k)}^2 |\ \fcf}\leq \efPB^2(1-\sqrt{\beta})(\Dp)^4\\
		\text{and}\quad & \E{\abs{\Fok-\fd(X^k)}^2 |\ \fcf}\leq \efPB^2(1-\sqrt{\beta})(\Dp)^4.
		\end{split}
		\end{equation}
		\item[(iii)] For all $j=1,2,\dots,m$, the sequence of estimates $\{\Cok,\Csk\}$ is $\alpha^{1/m}$-probabilistically $\efPB$-accurate.
		\item[(iv)] For all $j=1,2,\dots,m$, the sequence of estimates $\{\Cok,\Csk\}$ satisfies the following variance condition for all $k\geq 0$,
		\begin{equation}\label{varcond2PB}
		\begin{split}
		& \E{\abs{\Csk-c_j(X^k+S^k)}^2 |\ \fcf}\leq \efPB^2\left(1-\alpha^{1/2m}\right)(\Dp)^4\\
		\text{and}\quad & \E{\abs{\Cok-c_j(X^k)}^2 |\ \fcf}\leq \efPB^2\left(1-\alpha^{1/2m}\right)(\Dp)^4.
		\end{split}
		\end{equation}
		\item[(v)]  The sequence of random bounds $\{\Lok, \Uok,\Lsk, \Usk \}$ is  $\alpha$-probabilistically $\efPB$-reliable. 
		\item[(vi)] The sequence of random estimated violations $\{\Hok,\Hsk\}$ satisfies 
		\begin{equation}\label{Hcond1}
		\begin{split}
		& \E{\abs{\Hsk-h(X^k+S^k)} |\ \fcf}\leq m\efPB(1-\alpha)^{1/2}(\Dp)^2\\
		\text{and}\quad & \E{\abs{\Hok-h(X^k)} |\ \fcf}\leq m\efPB(1-\alpha)^{1/2}(\Dp)^2.
		\end{split}
		\end{equation}
	\end{itemize}
\end{assumption}
An iteration $k$ for which $\iik\ijk=1$, i.e., for which the events $I_k$ and $J_k$ both occur, will be called ``{\it true}''. Otherwise, it will be called ``{\it false}''. Even though the present algorithmic framework does not allow to determine which iterations are true or false, Theorem~\ref{trueIterations} shows that true iterations occur infinitely often for convergence to hold, provided that estimates and bounds are sufficiently accurate. Theorem~\ref{trueIterations} will also be useful for the convergence analysis of Algorithm~\ref{algoPB}, more precisely in Subsection~\ref{forf}. 
%
\begin{theorem}\label{trueIterations}
	Assume that Assumption~\ref{randomEstim} holds for $\alpha\beta\in (1/2,1)$. Then true iterations of Algorithm~\ref{algoPB} occur infinitely often. 
\end{theorem}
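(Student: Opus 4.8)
The statement asserts that true iterations (those with $\iik\ijk=1$) occur infinitely often almost surely. The plan is to argue by contradiction: suppose that on a set of positive probability there are only finitely many true iterations, i.e., there exists a (random) index $K$ such that $\iik\ijk=0$ for all $k\geq K$. I would like to derive a contradiction from the fact that $\alpha\beta>1/2$ forces true iterations to dominate false ones in a suitable martingale-counting sense.

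\medskip

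\noindent\textbf{Step 1: Set up the counting process.} First I would introduce, for each $k$, the random variable $W_k := \iik\ijk - \alpha\beta$ (or a variant thereof), and consider the partial sums $\sum_{\ell=0}^{k-1}(\ir[\ell] - \mathbb{E}[\ir[\ell]\mid\fcf[\ell]])$ where $\ir[k]$ plays the role of $\iik\ijk$. By Assumption~\ref{randomEstim}(i) and (v) together with the independence of the random variables $\Theta_j$ (so that the event $J_k$ and the event $\bigcap_j I_k^j$ are conditionally independent given $\fcf$), one obtains
\begin{equation}\label{condprobtrue}
\mathbb{E}\!\left[\iik\ijk\ \middle|\ \fcf\right] = \mathbb{E}\!\left[\iik\ \middle|\ \fcf\right]\,\mathbb{E}\!\left[\ijk\ \middle|\ \fcf\right] \geq \alpha\beta > \tfrac12 . \nonumber
\end{equation}
Thus the conditional probability of a true iteration, given the past, is bounded below by $\alpha\beta$ uniformly.

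\medskip

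\noindent\textbf{Step 2: Apply a martingale argument.} The sequence $M_k := \sum_{\ell=0}^{k-1}\bigl(\iik[\ell]\ijk[\ell] - \mathbb{E}[\iik[\ell]\ijk[\ell]\mid\fcf[\ell]]\bigr)$ is a martingale with bounded increments (bounded by $1$ in absolute value) with respect to the filtration $\{\fcf\}$. I would invoke the strong law of large numbers for martingale differences (or, more precisely, the result that a martingale with increments bounded by a constant either converges to a finite limit or oscillates between $+\infty$ and $-\infty$, so that $M_k/k\to 0$ a.s.\ — e.g.\ via Azuma--Hoeffding along a subsequence, or the martingale SLLN). Hence, almost surely,
\begin{equation}\label{martSLLN}
\frac{1}{k}\sum_{\ell=0}^{k-1}\iik[\ell]\ijk[\ell] - \frac{1}{k}\sum_{\ell=0}^{k-1}\mathbb{E}\bigl[\iik[\ell]\ijk[\ell]\mid\fcf[\ell]\bigr] \longrightarrow 0 . \nonumber
\end{equation}
Combining with Step 1, the second term is $\geq \alpha\beta > 1/2$ for every $k$, so $\liminf_k \frac{1}{k}\sum_{\ell=0}^{k-1}\iik[\ell]\ijk[\ell]\geq \alpha\beta > 1/2$ almost surely. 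In particular $\sum_k \iik\ijk = +\infty$ a.s., which directly contradicts the assumption that only finitely many true iterations occur on a set of positive probability. This is the core of the argument.

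\medskip

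\noindent\textbf{Main obstacle.} The delicate point is justifying the conditional independence used in Step 1: one must check that, conditioned on $\fcf$, the event $J_k$ (accuracy of the $f$-estimates) and the event $I_k$ (reliability of the violation bounds, which by Definition~\ref{ikDef} is controlled through $\bigcap_{j=1}^m I_k^j$) are independent, so that the lower bounds $\beta$ and $\alpha$ multiply. This rests on the stated independence of $\Theta_0$ and the $\Theta_j$, $j\in J$, and on the fact that the estimates feeding $J_k$ use only $\Theta_0$-samples while those feeding the $I_k^j$ use only $\Theta_j$-samples; I would spell this out carefully, perhaps stating it as a small lemma. The other technical care-point is invoking the right martingale limit theorem for bounded increments (making sure integrability — guaranteed by Proposition~\ref{kappafmax} and boundedness of the indicators — is in place); this is standard but should be cited precisely. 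Everything else is bookkeeping with the filtration $\{\fcf\}_{k\geq -1}$.
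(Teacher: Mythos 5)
Your proposal is correct, and it rests on the same pivot as the paper's proof --- the conditional lower bound $\E{\iik\ijk\,|\,\fcf}\geq\alpha\beta$ applied to a martingale built from the indicators of true iterations --- but the martingale tool you invoke is different. The paper considers the random walk $W_k=\sum_{i=0}^{k}(2\,\iii\iji-1)$, which under $\alpha\beta>1/2$ is a submartingale with bounded increments and strictly positive drift, and cites the result of Chen et al.\ that $\limsup_k W_k=+\infty$ almost surely, from which $\iii\iji=1$ infinitely often follows. You instead compensate the counting process and apply the martingale strong law (or Azuma--Hoeffding plus Borel--Cantelli) to conclude $\liminf_k \frac{1}{k}\sum_{\ell=0}^{k-1}\mathds{1}_{I_\ell}\mathds{1}_{J_\ell}\geq\alpha\beta$ almost surely. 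Your route buys two things: a quantitatively stronger conclusion (a positive asymptotic \emph{density} of true iterations, not merely infinitude), and the observation that for this particular theorem the hypothesis $\alpha\beta>1/2$ is not needed --- any positive lower bound on $\pr{I_k\cap J_k\,|\,\fcf}$ suffices (indeed, even the union bound $\alpha+\beta-1>0$ would do), whereas the paper's random-walk formulation genuinely uses $\alpha\beta>1/2$ for the positive drift. Two small cautions: the ``converges or oscillates'' dichotomy you mention does not by itself yield $M_k/k\to 0$, so you should lean on the martingale SLLN or Azuma--Hoeffding as you indicate; and the product bound $\E{\iik\ijk\,|\,\fcf}\geq\alpha\beta$ does require the conditional independence of $J_k$ and $I_k$ that you flag --- the paper asserts this bound as a ``simple calculation'' without spelling it out, so your proposed lemma making the independence of the $\Theta_0$- and $\Theta_j$-samples explicit is a legitimate addition rather than a gap.
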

\begin{proof} 
	Consider the following random walk 
	\begin{equation}\label{Wk}
	W_k= \sum_{i=0}^{k}(2\cdot\iii\iji-1).
	\end{equation}
	Then, the result easily follows from the fact that $\accolade{\underset{k\to+\infty}{\limsup}\ W_k=+\infty}$ almost surely, the proof of which can be derived from that of Theorem~4.16 in~\cite{chen2018stochastic} (using $\fcf$ instead of $\fij$), where a similar random walk was studied.
	Indeed, the latter result means that 
	\[\pr{ \accolade{\omega\in\Omega:\exists K(\omega)\subset\N\ \text{such that}\ \lim_{k\in K(\omega)}W_k(\omega)=+\infty} }=1, \]
	which implies that $\iii\iji=1$ infinitely often.
\end{proof}



\subsection{ Computation of probabilistically accurate estimates and reliable bounds}\label{computationPB}
This section discusses approaches for computing accurate random estimates and reliable bounds satisfying Assumption~\ref{randomEstim} in a simple random noise framework, and hence how corresponding deterministic estimates can be obtained using evaluations of the stochastic blackbox. Such approaches strongly rely on the computation of $\alpha^{1/m}$-probabilistically $\efPB$-accurate estimates $\{\Cok,\Csk\}$, using techniques derived in~\cite{chen2018stochastic}.

Consider the following typical noise assumption often made in stochastic optimization literature:
\begin{eqnarray*}
	\Esp_{\Theta_0}\left[f_{\Theta_0}(x)\right]&=&f(x)\quad\text{and}\quad {\mathbb{V}}_{\Theta_0}\left[f_{\Theta_0}(x)\right]\leq V_0<+\infty\ \ \text{for all}\ x\in \X \\
	\Esp_{\Theta_j}\left[c_{\Theta_j}(x)\right]&=& c_j(x)\! \quad\text{and}\quad {\mathbb{V}}_{\Theta_j}\left[c_{\Theta_j}(x)\right]\leq V_j<+\infty\ \ \text{for all}\ x\in \X\ \text{and for all}\ j\in J,
\end{eqnarray*}
where $V_i>0$ is a constant for all $i=0,1,\dots,m$. Let $V=\max\{V_0,V_1,\dots,V_m\}$.  

For some fixed $j\in J$, let $\Theta_j^0$ and $\Theta_j^s$ be two independent random variables following the same distribution as $\Theta_j$. Let $\Theta^0_{j,\ell},\  \ell=1,2,\dots,p^k_j$ and $\Theta^s_{j,\ell},\  \ell=1,2,\dots,p^k_j$ be independent random samples of $\Theta_j^0$ and $\Theta_j^s$ respectively, where $p_j^k\geq 1$ is an integer denoting the sample size. In order to satisfy Assumption~\ref{randomEstim}-$(iii)$, define $\Cok$ and $\Csk$ respectively by  

\[\Cok=\frac{1}{p^k_j}\sum_{\ell=1}^{p^k_j}c_{\Theta^0_{j,\ell}}(x^k)\quad \text{and}\quad  \Csk=\frac{1}{p^k_j}\sum_{\ell=1}^{p^k_j}c_{\Theta^s_{j,\ell}}(x^k+s^k).\]
By noticing that $\E{\Cok}=c_j(x^k)$ and that $\var{\Cok}\leq\frac{V}{p^k_j}$ for all $j$, then it follows from the Chebyshev inequality that
\begin{equation}\label{chebichev}
\pr{\abs{\Cok-c_j(x^k)}>\efPB(\dpl)^2}=\pr{\abs{\Cok-\E{\Cok}}>\efPB(\dpl)^2}\leq\frac{V}{p^k_j \efPB^2(\dpl)^4}.
\end{equation}
Thus, choosing $p^k_j$ such that 
\begin{equation}\label{pkj}
p_j^k\geq \frac{V}{\efPB^2\left(1-\alpha^{1/2m} \right)(\dpl)^4}
\end{equation}
ensures that $\frac{V}{p^k_j \efPB^2(\dpl)^4}\leq 1-\alpha^{1/2m}$. Then, combining~\eqref{chebichev} and~\eqref{pkj} yields for all $j\in J$,
\begin{equation}\label{coalpha2m}
\pr{\abs{\Cok-c_j(x^k)}\leq \efPB(\dpl)^2}\geq \alpha^{1/2m}
\end{equation}
and similarly, $\pr{\abs{\Csk-c_j(x^k+s^k)}\leq \efPB(\dpl)^2}\geq \alpha^{1/2m}$.
It follows from the independence of the random variables $\Theta_j^0$ and $\Theta_j^s$ and both previous inequalities that 
\begin{equation}\label{alpham}
\pr{\accolade{\abs{\Cok-c_j(x^k)}\leq \efPB(\dpl)^2}\cap \accolade{\abs{\Csk-c_j(x^k+s^k)}\leq \efPB(\dpl)^2}}\geq \alpha^{1/m},
\end{equation}
which means that Assumption~\ref{randomEstim}-{\it (iii)} holds. Estimates $\cok=\Cok(\omega)$ and $\csk=\Csk(\omega)$, obtained by averaging $\, p^k_j$ realizations of $c_{\Theta_j}$, resulting from the evaluations of the stochastic blackbox, respectively at $x^k$ and $x^k+s^k$, are obviously $\efPB$-accurate.

In order to satisfy Assumption~\ref{randomEstim}-{\it (v)}, notice that the independence of the random variables $\Theta_j, j\in J$ combined with~\eqref{coalpha2m} implies
\begin{equation}\label{coalphademi}
\pr{\overset{m}{\underset{j=1}{\bigcap}}\left\lbrace \abs{\Cok-c_j(x^k)}\leq\efPB(\dpl)^2 \right\rbrace}=\prod_{j=1}^{m} \pr{ \abs{\Cok-c_j(x^k)}\leq\efPB(\dpl)^2}\geq \alpha^{1/2}
\end{equation}
\begin{equation}\label{csalphademi}
\text{and similarly}, \quad  \pr{\overset{m}{\underset{j=1}{\bigcap}}\left\lbrace \abs{\Csk-c_j(x^k+s^k)}\leq\efPB(\dpl)^2 \right\rbrace}\geq \alpha^{1/2}.
\end{equation}
Define the random bounds $\Lokx$, $\ \Lskx$, $\ \Uokx\ $ and $\ \Uskx,\ $ respectively by
\begin{eqnarray*}\label{LokxUokx}
	\Lokx&=&\sum_{j=1}^m\max\left\lbrace \Cok-\efPB(\dpl)^2,0 \right\rbrace,\quad\quad \quad \quad      \Uokx=\sum_{j=1}^m\max\left\lbrace \Cok+\efPB(\dpl)^2,0 \right\rbrace\\
	\Lskx&=&\sum_{j=1}^m\max\left\lbrace \Csk-\efPB(\dpl)^2,0 \right\rbrace\ \ \text{and}\ \     \Uskx=\sum_{j=1}^m\max\left\lbrace \Csk+\efPB(\dpl)^2,0 \right\rbrace.
\end{eqnarray*}
Define the events $E_0^k$ and $E_s^k$ respectively by
\begin{equation}\label{Eok}
E^k_0=\accolade{\Lokx\leq h(x^k)\leq \Uokx}\ \text{and}\ \ E^k_s=\accolade{\Lskx\leq h(x^k+s^k)\leq \Uskx}
\end{equation}
By noticing that 
\begin{equation}\label{inclusion1}
\overset{m}{\underset{j=1}{\bigcap}}\left\lbrace \abs{\Cok-c_j(x^k)}\leq\efPB(\dpl)^2 \right\rbrace=\overset{m}{\underset{j=1}{\bigcap}}\accolade{\Cok-\efPB(\dpl)^2\leq c_j(x^k)\leq \Cok+\efPB(\dpl)^2}\subseteq E_0^k
\end{equation}
\begin{equation}\label{inclusion2}
\overset{m}{\underset{j=1}{\bigcap}}\left\lbrace \abs{\Csk-c_j(x^k+s^k)}\leq\efPB(\dpl)^2 \right\rbrace \subseteq E_s^k,
\end{equation}
then combining respectively~\eqref{coalphademi} and~\eqref{inclusion1}, and~\eqref{csalphademi} and~\eqref{inclusion2}, yields 
\begin{equation}\label{Eosalphademi}
\pr{E_0^k}\geq\pr{\overset{m}{\underset{j=1}{\bigcap}}\left\lbrace \abs{\Cok-c_j(x^k)}\leq\efPB(\dpl)^2 \right\rbrace} \geq \alpha^{1/2}
\end{equation}
\begin{equation}\label{Eskalphademi}
\pr{E_s^k}\geq\pr{\overset{m}{\underset{j=1}{\bigcap}}\left\lbrace \abs{\Csk-c_j(x^k+s^k)}\leq\efPB(\dpl)^2 \right\rbrace} \geq  \alpha^{1/2}.
\end{equation}
It follows from the independence of the random variables $\Theta^0_{j,\ell}$ and $\Theta^s_{j,\ell}$, for all $j\in J$ and for all $\ell=1,2,\dots,p^k_j$, that the events $E_0^k$ and $E_s^k$ are also independent. Hence, both inequalities~\eqref{Eosalphademi} and~\eqref{Eskalphademi} imply that 
\begin{eqnarray*}
	\alpha&\leq& \pr{\overset{m}{\underset{j=1}{\bigcap}}\left\lbrace \abs{\Cok-c_j(x^k)}\leq\efPB(\dpl)^2 \right\rbrace}\times \pr{\overset{m}{\underset{j=1}{\bigcap}}\left\lbrace \abs{\Csk-c_j(x^k+s^k)}\leq\efPB(\dpl)^2 \right\rbrace}\\
	&=& \pr{\overset{m}{\underset{j=1}{\bigcap}}\left\lbrace \abs{\Cok-c_j(x^k)}\leq\efPB(\dpl)^2 \right\rbrace \cap \left\lbrace \abs{\Csk-c_j(x^k+s^k)}\leq\efPB(\dpl)^2 \right\rbrace}\\
	&\leq&\pr{E_0^k}\times\pr{E_s^k}=\pr{E_0^k\cap E_s^k},
\end{eqnarray*}
which shows that Assumption~\ref{randomEstim}-{\it (v)} holds. 

In order to show that Assumption~\ref{randomEstim}-{\it (iv)} holds, notice that $\E{\Cok-c_j(x^k)}=0$ for all $j\in J$, which implies that for all $j\in J$,
\begin{equation}\label{espCarre1}
\E{\abs{\Cok-c_j(x^k)}^2}=\var{\Cok-c_j(x^k)}=\var{\Cok}\leq\frac{V}{p_j^k}\leq \efPB^2\left(1-\alpha^{1/2m} \right)(\dpl)^4,
\end{equation}
where the last inequality in~\eqref{espCarre1} follows from~\eqref{pkj}. Similarly, since $\E{\Csk-c_j(x^k+s^k)}=0$ for all $j\in J$, then 
\begin{equation}\label{espCarre2}
\E{\abs{\Csk-c_j(x^k+s^k)}^2}\leq  \efPB^2\left(1-\alpha^{1/2m} \right)(\dpl)^4,
\end{equation}
which shows that Assumption~\ref{randomEstim}-{\it (iv)} holds. 

Before showing Assumption~\ref{randomEstim}-{\it (vi)}, let first notice that
\begin{eqnarray}
\abs{\Hok-h(x^k)}&=&\abs{\sum_{j=1}^m\max\{\Cok,0\} - \sum_{j=1}^m\max\{c_j(x^k),0\}}\nonumber \\
&\leq&\sum_{j=1}^m \abs{\max\{\Cok,0\} - \max\{c_j(x^k),0\}}\leq \sum_{j=1}^m \abs{\Cok-c_j(x^k)},\label{ineq}
\end{eqnarray}
where the last inequality in~\eqref{ineq} follows from the inequality $\abs{\max\{x,0\} - \max\{y,0\}}\leq \abs{x-y}$, for all $x,y\in\R$. Moreover, it follows from the Cauchy-Schwarz inequality~\cite{bhattacharya2007basic} that for all $j\in J$,
\begin{equation}\label{esp3}
\E{\abs{\Cok-c_j(x^k)}}\leq\left[\E{\abs{\Cok-c_j(x^k)}^2}\right]^{1/2}\leq\efPB\left(1-\alpha\right)^{1/2}(\dpl)^2,
\end{equation}
where the last inequality in~\eqref{esp3} follows from~\eqref{espCarre1}.  Thus, taking the expectation in~\eqref{ineq}, combined with~\eqref{esp3} yields
\begin{equation}
\E{\abs{\Hok-h(x^k)}}\leq \sum_{j=1}^m\E{\abs{\Cok-c_j(x^k)}}\leq m\efPB\left(1-\alpha\right)^{1/2}(\dpl)^2, \nonumber
\end{equation}
\begin{equation}
\text{and similarly}\quad \E{\abs{\Hsk-h(x^k+s^k)}}\leq m\efPB\left(1-\alpha\right)^{1/2}(\dpl)^2, \nonumber
\end{equation}
which shows that Assumption~\ref{randomEstim}-{\it (vi)} holds. 

Finally, let compute estimates $\Fok$ and $\Fsk$ that satisfy Assumption~\ref{randomEstim}-{\it (i)} and {\it (ii)}. For that purpose, let $\Theta_0^0$ and $\Theta_0^s$ be two independent random variables following the same distribution as $\Theta_0$. Let $\Theta^0_{0,\ell},\  \ell=1,2,\dots,p^k_0$ and $\Theta^s_{0,\ell},\  \ell=1,2,\dots,p^k_0$ be independent random samples of $\Theta_0^0$ and $\Theta_0^s$ respectively, where $p_0^k\geq 1$ denotes the sample size. Define $\Fok$ and $\Fsk$ respectively by  

\[\Fok=\frac{1}{p^k_0}\sum_{\ell=1}^{p^k_0}f_{\Theta^0_{0,\ell}}(x^k)\quad \text{and}\quad  \Fsk=\frac{1}{p^k_0}\sum_{\ell=1}^{p^k_0}f_{\Theta^s_{0,\ell}}(x^k+s^k).\]
Then $\E{\Fok}=f(x^k)$, which implies that $\var{\Fok}\leq\frac{V}{p^k_0}$. Thus, it is easy to notice that the proof of Assumption~\ref{randomEstim}-{\it (i)} follows that of Assumption~\ref{randomEstim}-{\it (iii)}. More precisely, the following inequality holds: 
\begin{equation}\label{fosbetaPB}
\pr{\accolade{\abs{\Fok-f(x^k)}\leq \efPB(\dpl)^2}\cap \accolade{\abs{\Fsk-f(x^k+s^k)}\leq \efPB(\dpl)^2}}\geq \beta,
\end{equation}
provided that 
\begin{equation}\label{pk0}
p_0^k\geq \frac{V}{\efPB^2\left(1-\sqrt{\beta} \right)(\dpl)^4}
\end{equation}
Estimates $\fok=\Fok(\omega)$ and $\fsk=\Fsk(\omega)$, obtained by averaging $\, p^k_0$ realizations of $f_{\Theta_0}$, resulting from the evaluations of the stochastic blackbox, respectively at $x^k$ and $x^k+s^k$, are obviously $\efPB$-accurate. It is also easy to notice that the proof of Assumption~\ref{randomEstim}-{\it (ii)} follows that of Assumption~\ref{randomEstim}-{\it (iv)}. Specifically, 
\begin{equation}\label{espfCarre2}
\E{\abs{\Fok-f(x^k)}^2}\leq  \efPB^2(1-\sqrt{\beta})(\dpl)^4\quad\text{and}\quad \E{\abs{\Fsk-f(x^k+s^k)}^2}\leq  \efPB^2(1-\sqrt{\beta})(\dpl)^4,\nonumber
\end{equation}
provided that $p^k_0$ is chosen according to~\eqref{pk0}.

\section{Convergence analysis}\label{convAnalysisPB} 
Using ideas inspired by~\cite{AuDe09a,audet2019stomads,chen2018stochastic,LaBi2016,paquette2018stochastic} this section presents convergence results of StoMADS-PB, most of which are stochastic variants of those in~\cite{AuDe09a}. It introduces the random time $T$ at which Algorithm~\ref{algoPB} generates a first $\efPB$-feasible solution. Then assuming that $T$ is either almost surely finite or almost surely infinite, a so-called {\bl zeroth-order} result~\cite{AuDeLe08,audet2019stomads} is derived showing that there exists a subsequence of Algorithm~\ref{algoPB}-generated
random iterates with mesh realizations becoming infinitely fine and which converges with probability one to a limit.
This is achieved by showing by means of Theorem~\ref{zerothorderTheorem} that the sequence of random poll size parameters converges to zero with probability one. Section~\ref{forh} analyzes the function $h$ and the random $\efPB$-infeasible iterates generated by Algorithm~\ref{algoPB}. In particular, it gives conditions under which an  almost sure limit of a subsequence of such iterates is shown in Theorem~\ref{resulth} to satisfy a first-order necessary optimality condition via the Clarke generalized derivative of $h$ with probability one. Then, a similar result for $f$ and the sequence of $\efPB$-feasible iterates is derived in Theorem~\ref{resultf} of Section~\ref{forf}. {\bl Note finally that the proofs of the main results of this section are presented in the Appendix.}


\subsection{Zeroth-order convergence}\label{subZeroOrder}

Recall Remark~\ref{remarkFCF} and denote by $\mathscr{S}^k_X=\{X^{\ell}_{\text{feas}}:X^{\ell}_{\text{feas}}\neq\xzeroi,\ \ell\leq k\}$ the set of all random $\efPB$-feasible  iterates generated by Algorithm~\ref{algoPB} until the beginning of iteration~$k$. Consider the following random time $T$ defined by 
\begin{equation}\label{randomTimeT}
T:=\inf\{k\geq 0:\mathscr{S}^k_X\neq\emptyset\}.
\end{equation}
Then it is easy to notice that $T\geq 1$ and that for all $k\geq 1$, the occurrence of the event $\{T\leq k\}$ is determined by observing the random quantities generated by Algorithm~\ref{algoPB} until the iteration $k-1$, which means that $T$ is a {\bl stopping time}~\cite{durrett2010probability} for the stochastic process generated by Algorithm~\ref{algoPB}. The following is assumed for the remainder of the analysis.

\begin{assumption}\label{stoppingAssumption}
	The stopping time $T$ associated to the stochastic process generated by Algorithm~\ref{algoPB} is either almost surely finite or almost surely infinite.
\end{assumption}

The next result implies that the sequence $\{\Dp\}_{k\in\N}$ of random poll size parameters converges to zero with probability one and will be useful for the Clarke stationarity results of Sections~\ref{forh} and~\ref{forf}. It holds under the assumption below.

\begin{assumption}\label{fisbounded}
	The objective function $f$ is bounded from below, i.e., there exists $\kappa^f_{\min}\in\R$ such that $-\infty<\kappa^f_{\min}\leq f(x)$, for all $x\in\rn$.
\end{assumption}

\begin{theorem}\label{zerothorderTheorem}
	Let Assumptions~\ref{lipschitzAssumption}, \ref{stoppingAssumption} and~\ref{fisbounded} be satisfied. Let $\gamma>2$ and $\tau\in(0,1)\cap\mathbb{Q}$. Let $\nu\in(0,1)$ be chosen such that
	\begin{equation}\label{nuChoice}
	\frac{\nu}{1-\nu}\geq \frac{2(\tau^{-2}-1)}{\gamma-2}
	\end{equation}
	and assume that Assumption~\ref{randomEstim} holds for $\alpha$ and $\beta$ chosen such that 
	\begin{equation}\label{betaChoice}
	\alpha\beta\geq \frac{4\nu}{(1-\nu)(1-\tau^2)}\left[(1-\alpha)^{1/2}+2(1-\beta)^{1/2} \right].
	\end{equation}
	Then, the sequence $\{\Dp\}_{k\in\N}$ of frame size parameters generated by Algorithm~\ref{algoPB} satisfies
	\begin{equation}\label{deltaSeries}
	\sum_{k=0}^{+\infty}(\Dp)^2<+\infty\quad\text{almost surely}.
	\end{equation}
\end{theorem}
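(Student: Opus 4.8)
The plan is to run the supermartingale argument that is by now standard for stochastic direct-search and trust-region methods, adapted to the progressive barrier. I would attach to the iterates a nonnegative, integrable merit function (measurable with respect to $\fcf$) such as $\Phi_k:=\nu\big(\tfrac{f(\Xkf)-\kappa^f_{\min}}{\efPB}+\tfrac{h(\Xki)}{m\efPB}\big)+(1-\nu)(\Dp)^2$, show that it decreases in conditional expectation by at least a fixed multiple of $(\Dp)^2$ at each step, and then convert the accumulated drift into~\eqref{deltaSeries} by telescoping together with Tonelli's theorem. This $\Phi_k$ is nonnegative by Assumption~\ref{fisbounded} (and $h\ge 0$), bounded and in $L^1$ by Proposition~\ref{kappafmax}, and its $f$-part is inert until the first $\efPB$-feasible iterate is produced. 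Assumption~\ref{stoppingAssumption} reduces the analysis to two regimes: if $T=+\infty$ almost surely the $f$-part never changes, every successful iteration being $h$-Dominating or Improving, on which $h(\Xki)$ strictly decreases on true iterations by Proposition~\ref{successh}; if $T<+\infty$ almost surely, then for $k\ge T$ both incumbents are present, $f(\Xkf)$ is decreased exactly on $f$-Dominating iterations (Proposition~\ref{fdcrease}) and $h(\Xki)$ exactly on $h$-Dominating and Improving ones, the other incumbent being frozen on those iterations, so $\Phi_k$ decreases at every successful iteration. The lone transition iteration $k=T-1$ at which the first $\efPB$-feasible point appears changes $\Phi$ by only a bounded amount and occurs at most once, hence contributes a finite constant to the bound.

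The heart of the matter is the one-step estimate $\E{\Phi_{k+1}-\Phi_k\mid\fcf}\le-\theta\,(\Dp)^2$ for some fixed $\theta>0$, obtained by a case analysis over whether iteration $k$ is true (the event $I_k\cap J_k$, of conditional probability at least $\alpha\beta$ under Assumption~\ref{randomEstim}) and whether it is successful. On a successful iteration $(\Dp)^2$ is multiplied by at most $\tau^{-2}$; if the iteration is moreover true then the active incumbent decreases by at least $(\gamma-2)\efPB(\Dp)^2$ (resp.\ $(\gamma-2)m\efPB(\Dp)^2$) by Propositions~\ref{fdcrease} and~\ref{successh}, so that condition~\eqref{nuChoice} --- calibrated so that the weighted decrease dominates the growth factor $\tau^{-2}-1$ --- forces $\Phi_{k+1}-\Phi_k\le-(1-\nu)(1-\tau^2)(\Dp)^2$; on an unsuccessful iteration the incumbents are frozen and $(\Dp)^2$ is multiplied by $\tau^2$, again giving $\Phi_{k+1}-\Phi_k\le-(1-\nu)(1-\tau^2)(\Dp)^2$. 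The only source of a positive contribution is a false successful iteration, where an accepted trial point may in truth worsen $f(\Xkf)$ or $h(\Xki)$; since acceptance of an $f$-Dominating step forces $\Fsk-\Fok\le-\gamma\efPB(\Dp)^2$ and of an $h$-Dominating or Improving step forces $\Hsk-\Hok\le-\gamma m\efPB(\Dp)^2$, such a worsening is at most $\abs{f(X^k+S^k)-\Fsk}+\abs{\Fok-f(X^k)}$ (resp.\ $\abs{h(X^k+S^k)-\Hsk}+\abs{\Hok-h(X^k)}$), and moreover $f(\Xkf)$ can rise only when $J_k$ fails while $h(\Xki)$ can rise only when $I_k$ fails. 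Taking conditional expectations and using $\E{\ijkc\mid\fcf}\le1-\beta$ with the variance bound of Assumption~\ref{randomEstim}(ii) via Cauchy--Schwarz, together with the bound of Assumption~\ref{randomEstim}(vi), the false-iteration contribution is at most a fixed multiple of $\big[(1-\alpha)^{1/2}+2(1-\beta)^{1/2}\big]\,(\Dp)^2$; condition~\eqref{betaChoice} is exactly the inequality needed for the negative drift $-(1-\nu)(1-\tau^2)\alpha\beta\,(\Dp)^2$ from true iterations to absorb this term, yielding $\E{\Phi_{k+1}-\Phi_k\mid\fcf}\le-\theta\,(\Dp)^2$.

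With this inequality in hand, taking total expectations, telescoping from $k=0$ to $k=N$ and using $\Phi_{N+1}\ge0$ gives $\theta\sum_{k=0}^{N}\E{(\Dp)^2}\le\E{\Phi_0}<+\infty$, the right-hand side being finite by Proposition~\ref{kappafmax} and because $\delta^0_p$ and $\xzeroi$ are deterministic; letting $N\to+\infty$ and invoking Tonelli's theorem produces $\E{\sum_{k=0}^{+\infty}(\Dp)^2}<+\infty$, whence $\sum_{k=0}^{+\infty}(\Dp)^2<+\infty$ almost surely. I expect the main obstacle to be not the martingale machinery but the bookkeeping forced by the progressive barrier: a single poll size $\Dp$ drives the polling around both incumbents, so the merit must register a genuine decrease at every successful iteration type, which requires checking carefully that each incumbent is frozen precisely on the iterations moving the other, carrying the constants $m$ and $\nu$ correctly through the false-iteration estimates so that~\eqref{betaChoice} emerges as stated, patching the $k<T$ analysis onto the $k\ge T$ one while isolating the transition iteration $T-1$, and keeping all the conditional-expectation manipulations measurable with respect to the filtration $\{\fcf\}$.
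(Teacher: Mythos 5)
Your proposal is correct and follows essentially the same route as the paper: the same Lyapunov function $\nu\bigl(\tfrac{f-\kappa^f_{\min}}{\efPB}+\tfrac{h}{m\efPB}\bigr)+(1-\nu)(\Dp)^2$, the same case analysis over iteration type crossed with the accuracy events $I_k$, $J_k$, the same roles for~\eqref{nuChoice} (true-iteration drift dominates the $\tau^{-2}-1$ growth) and~\eqref{betaChoice} (drift absorbs the false-iteration noise bounded via Cauchy--Schwarz and Assumption~\ref{randomEstim}), and the same telescoping plus monotone-convergence finish. The only cosmetic difference is that the paper indexes the $f$-part by $X^{k\vee T}_{\textnormal{feas}}$ so that it is exactly constant before $T$ and no transition jump ever occurs, whereas you use $f(\Xkf)$ and absorb the single bounded jump at the first $\efPB$-feasible acceptance into a finite constant, which is equally valid.
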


The following result is a simple consequence of  Theorem~\ref{zerothorderTheorem}. It shows that the sequences $\{\Dm\}_{k\in\N}$ and $\{\Dp\}_{k\in\N}$ converge to zero almost surely respectively.
\begin{corollary}\label{meshframeConv}
	The followings hold under all the assumptions made in Theorem~\ref{zerothorderTheorem}
	\begin{equation}
	\underset{k\to+\infty}{\lim}\Dm=0\ \text{almost surely}\quad\text{and}\quad \underset{k\to+\infty}{\lim}\Dp=0\ \text{almost surely}.\nonumber
	\end{equation}
\end{corollary}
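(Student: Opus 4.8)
The plan is to deduce both limits directly from the almost-sure summability established in Theorem~\ref{zerothorderTheorem}. First I would fix the event $\Omega_0\subseteq\Omega$ with $\pb(\Omega_0)=1$ on which $\sum_{k=0}^{+\infty}(\Dp)^2<+\infty$, and then argue pathwise for each $\omega\in\Omega_0$. Since the general term of a convergent series must tend to zero, one has $(\Dp(\omega))^2\to 0$ as $k\to+\infty$, and therefore $\Dp(\omega)\to 0$. As this holds on a set of probability one, it gives $\lim_{k\to+\infty}\Dp=0$ almost surely, which is the second assertion of the corollary.

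For the first assertion I would invoke the structural relation between the mesh and poll size parameters coming from the Parameter Update step of Algorithm~\ref{algoPB}, namely $\Dm=\min\{\Dp,(\Dp)^2\}$, which in particular yields the deterministic bound $0\le\Dm\le\Dp$ for every $k$ and every $\omega$. Combining this with the convergence $\Dp(\omega)\to 0$ just obtained on $\Omega_0$ and applying the squeeze theorem gives $\Dm(\omega)\to 0$ for all $\omega\in\Omega_0$, hence $\lim_{k\to+\infty}\Dm=0$ almost surely. Equivalently, on $\Omega_0$ one has $\Dp<1$ for all sufficiently large $k$, so $\Dm=(\Dp)^2$ eventually, and the convergence of $\{\Dm\}$ follows from that of $\{(\Dp)^2\}$.

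There is essentially no hard step here; the corollary is a bookkeeping consequence of Theorem~\ref{zerothorderTheorem}. The only mild point to handle carefully is the usual ``almost surely'' accounting: one intersects the full-measure event furnished by Theorem~\ref{zerothorderTheorem} with the (trivially full-measure) event on which the deterministic identity $\Dm=\min\{\Dp,(\Dp)^2\}$ holds, and then reasons deterministically along each sample path in the resulting full-measure set. Measurability causes no difficulty, since $\Dm$ and $\Dp$ are already random variables by the construction of the stochastic process in Section~\ref{sec3}.
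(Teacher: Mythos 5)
Your argument is correct and is precisely the ``simple consequence'' the paper intends: the general term of the almost surely convergent series $\sum_k(\Dp)^2$ tends to zero pathwise, giving $\Dp\to 0$ a.s., and the update rule $\Dm=\min\{\Dp,(\Dp)^2\}\le\Dp$ transfers this to $\Dm$. The paper does not spell out these steps, but your reasoning matches its intended route exactly.
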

The next result shows that with probability one, the difference between the estimates and their corresponding true function values converge to zero. This means that Algorithm~\ref{algoPB} behaves like an exact deterministic method asymptotically. This result will be also useful in Subsection~\ref{forf} for the proof of Theorem~\ref{xhatD}.
\begin{corollary}\label{corHFok}
	Let all assumptions that were made in Theorem~\ref{zerothorderTheorem} hold. Then, 
	\begin{equation}\label{hfzero}
	\underset{k\to+\infty}{\lim}\abs{\Hok-h(X^k)}=0\ \text{almost surely}\quad\text{and}\quad \underset{k\to+\infty}{\lim}\abs{\Fok-f(X^k)}=0\ \text{almost surely},
	\end{equation}
	and the same result holds for $\abs{\Hsk-h(X^k+S^k)}$ and $\abs{\Fsk-f(X^k+S^k)}$ respectively.
\end{corollary}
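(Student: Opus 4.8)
The plan is to derive both limits from the almost-sure summability of $(\Dp)^2$ established in Theorem~\ref{zerothorderTheorem}, together with the conditional-expectation bounds of Assumption~\ref{randomEstim}-$(ii)$ and $(vi)$. First I would handle $\abs{\Hok-h(X^k)}$. By Assumption~\ref{randomEstim}-$(vi)$ we have $\E{\abs{\Hok-h(X^k)}\mid\fcf}\leq m\efPB(1-\alpha)^{1/2}(\Dp)^2$. Since $\Dp$ is $\fcf$-measurable (its value is determined by the quantities generated up to iteration $k-1$), the tower property gives $\E{\abs{\Hok-h(X^k)}}\leq m\efPB(1-\alpha)^{1/2}\E{(\Dp)^2}$. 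Summing over $k$ and invoking the monotone convergence theorem, $\sum_{k=0}^{+\infty}\E{\abs{\Hok-h(X^k)}}\leq m\efPB(1-\alpha)^{1/2}\,\E{\sum_{k=0}^{+\infty}(\Dp)^2}$, and the right-hand side is finite because Theorem~\ref{zerothorderTheorem} asserts $\sum_k(\Dp)^2<+\infty$ almost surely while each $(\Dp)^2$ is bounded by the fixed constant $(\dmax)^2$, so dominated convergence makes the expectation of the sum finite. Hence $\sum_k\abs{\Hok-h(X^k)}<+\infty$ almost surely, which forces $\abs{\Hok-h(X^k)}\to 0$ almost surely.

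For $\abs{\Fok-f(X^k)}$ I would pass through the second moment. Assumption~\ref{randomEstim}-$(ii)$ gives $\E{\abs{\Fok-f(X^k)}^2\mid\fcf}\leq\efPB^2(1-\sqrt{\beta})(\Dp)^4\leq\efPB^2(1-\sqrt{\beta})(\dmax)^2(\Dp)^2$. Taking total expectations and summing, $\sum_k\E{\abs{\Fok-f(X^k)}^2}\leq\efPB^2(1-\sqrt{\beta})(\dmax)^2\,\E{\sum_k(\Dp)^2}<+\infty$ by the same reasoning as above. Therefore $\sum_k\abs{\Fok-f(X^k)}^2<+\infty$ almost surely, and in particular $\abs{\Fok-f(X^k)}\to 0$ almost surely. (Alternatively one could first apply Jensen/Cauchy--Schwarz to bound $\E{\abs{\Fok-f(X^k)}\mid\fcf}$ by $\efPB(1-\sqrt\beta)^{1/2}(\Dp)^2$ and argue exactly as for $h$; either route works.)

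The cases of $\abs{\Hsk-h(X^k+S^k)}$ and $\abs{\Fsk-f(X^k+S^k)}$ are identical, using the companion inequalities in Assumption~\ref{randomEstim}-$(vi)$ and $(ii)$ respectively, since $S^k$ (hence $X^k+S^k$) is also generated from data measurable with respect to $\fcf$ and $\Dp$ remains $\fcf$-measurable.

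The only slightly delicate point — the "main obstacle" — is justifying the interchange of summation and expectation, i.e.\ that $\E{\sum_k(\Dp)^2}<+\infty$ rather than merely $\sum_k(\Dp)^2<+\infty$ almost surely. This is where the uniform bound $\Dp\leq\dmax$ (imposed in Section~\ref{descriptionAlgo} precisely for integrability reasons) is essential: it is not immediate from Theorem~\ref{zerothorderTheorem} alone, which is an almost-sure statement, but combined with the deterministic cap it lets one invoke dominated convergence on the partial sums, or equivalently note that the nonnegative random series has finite expectation. Everything else is a routine application of the tower property, monotone/dominated convergence, and the Borel--Cantelli-type observation that a nonnegative series with finite sum (a.s.) has vanishing terms.
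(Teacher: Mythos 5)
Your overall strategy coincides with the paper's: bound $\E{\abs{\Hok-h(X^k)}\mid\fcf}$ (resp.\ $\E{\abs{\Fok-f(X^k)}\mid\fcf}$, via Cauchy--Schwarz or via the second moment) by a constant times $(\Dp)^2$, take total expectations, sum over $k$, interchange sum and expectation by monotone convergence, conclude that the series of errors is almost surely finite and hence that its terms vanish. The only substantive difference is that the paper controls $\abs{\Fok-f(X^k)}$ through a first-moment Cauchy--Schwarz bound and argues exactly as for $h$, whereas you pass through the almost sure square-summability of $\abs{\Fok-f(X^k)}$; both routes are fine.

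However, the step you yourself single out as the delicate one is where your argument breaks. You claim that $\E{\sum_{k}(\Dp)^2}<+\infty$ follows from the almost sure finiteness of $\sum_k(\Dp)^2$ together with the pointwise bound $(\Dp)^2\leq(\dmax)^2$, ``so dominated convergence makes the expectation of the sum finite.'' This is not a valid application of dominated convergence: the partial sums $S_N^{\Delta}=\sum_{k=0}^{N}(\Dp)^2$ are only dominated by $(N+1)(\dmax)^2$, which is not a uniform integrable bound, and a series of nonnegative uniformly bounded terms can be almost surely finite while having infinite expectation (take $\Dp=\dmax$ for $k\leq N$ and $0$ afterwards, with $N$ a nonnegative integer random variable having finite value a.s.\ but infinite mean). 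The correct justification, which is what the paper uses, is that the proof of Theorem~\ref{zerothorderTheorem} actually establishes the stronger statement $\Esp\bigl[\sum_{k=0}^{+\infty}(\Dp)^2\bigr]\leq\mu<+\infty$ (see~\eqref{deltaPart1} and~\eqref{deltaPartT}): the telescoping supermartingale-type bound on $\E{\Phi_{k+1}-\Phi_k\mid\fcf}$ (resp.\ $\Phi_k^T$) directly bounds the \emph{expected} sum by $\Phi_0/\eta$ (resp.\ $\mu$), not merely the almost sure sum. You need to invoke that expectation bound explicitly rather than try to recover it from the almost-sure conclusion of the theorem; with that substitution the rest of your argument goes through.
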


\begin{definition}
	A convergent subsequence $\{x^k\}_{k\in \mathcal{K}}$ of Algorithm~\ref{algoPB} iterates, for some subset of indices~$\mathcal{K}$, is called a refining subsequence if and only if the corresponding subsequence $\{\dm\}_{k\in \mathcal{K}}$ converges to zero. The limit $\hat{x}$ is called a refined point.
\end{definition}
Combining the results of Corollary~\ref{meshframeConv} and the compactness hypothesis of Assumption~\ref{lipschitzAssumption} was shown in~\cite{audet2019stomads} to be enough to ensure the existence of refining subsequences. Specifically the following holds.
\begin{theorem}
	Let the assumptions that were made in Corollary~\ref{meshframeConv} hold. Then there exists at least one refining subsequence $\{X^k\}_{k\in K}$ (where $K$ is a sequence of random variables) which converges almost surely to a refined point $\hat{X}$.
\end{theorem}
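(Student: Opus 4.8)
The plan is to argue pathwise and then recover the required measurability. Fix a sample point $\omega$ in the probability-one event on which $\Dm(\omega)\to 0$ and $\Dp(\omega)\to 0$, as guaranteed by Corollary~\ref{meshframeConv}. By Assumption~\ref{lipschitzAssumption}, every iterate realization $X^k(\omega)$ lies in the compact set $\mathcal{U}\subset\rn$, so the bounded sequence $\{X^k(\omega)\}_{k\in\mathbb{N}}$ admits, by Bolzano--Weierstrass, a convergent subsequence $\{X^k(\omega)\}_{k\in K(\omega)}$ with some limit $\hat{X}(\omega)\in\mathcal{U}$. Since $\Dm(\omega)\to 0$, every subsequence of $\{\Dm(\omega)\}_{k\in\mathbb{N}}$ also converges to zero; in particular $\{\Dm(\omega)\}_{k\in K(\omega)}\to 0$, so by definition $\{X^k(\omega)\}_{k\in K(\omega)}$ is a refining subsequence and $\hat{X}(\omega)$ a refined point. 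This settles existence for almost every $\omega$; the purely analytic content is thus immediate from Corollary~\ref{meshframeConv} and the compactness in Assumption~\ref{lipschitzAssumption}.

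The main work is to make this construction \emph{measurable}, i.e. to exhibit a single random index set $K$ (a sequence of $\mathcal{G}$-measurable indices) and a single random variable $\hat{X}$ realizing the above for a.e.\ $\omega$ simultaneously. I would do this by a canonical nested-covering (diagonal) argument. For each $j\in\mathbb{N}$ fix a finite cover of $\mathcal{U}$ by closed balls $B_{j,1},\dots,B_{j,N_j}$ of radius $1/j$. At level $j=1$ let $i_1(\omega)$ be the least index $i$ for which $B_{1,i}$ contains $X^k(\omega)$ for infinitely many $k$; this is well defined for every $\omega$ (by pigeonhole), and the event $\{i_1=i\}$ is $\mathcal{G}$-measurable since it is built from countably many events $\{X^k\in B_{1,i}\}$ via a $\limsup$. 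Recursively, given $C_{j-1}(\omega):=B_{1,i_1(\omega)}\cap\cdots\cap B_{j-1,i_{j-1}(\omega)}$, let $i_j(\omega)$ be the least $i$ with $C_{j-1}(\omega)\cap B_{j,i}$ containing infinitely many iterates, and set $C_j(\omega):=C_{j-1}(\omega)\cap B_{j,i_j(\omega)}$; again each $\{i_j=i\}$ is measurable. The $C_j(\omega)$ are nonempty, closed, nested, with $\operatorname{diam}C_j(\omega)\le 2/j\to 0$, so by Cantor's intersection theorem $\bigcap_j C_j(\omega)$ is a single point, taken to be $\hat{X}(\omega)$; measurability of $\hat{X}$ follows because it is the pointwise limit of the measurable maps $\omega\mapsto(\text{center of }C_j(\omega))$. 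Finally define $K(\omega)=\{k_1(\omega)<k_2(\omega)<\cdots\}$ by letting $k_j(\omega)$ be the least index exceeding $k_{j-1}(\omega)$ with $X^{k_j(\omega)}(\omega)\in C_j(\omega)$; each $k_j$ is measurable, $k_j(\omega)\to\infty$, and $\|X^{k_j(\omega)}(\omega)-\hat{X}(\omega)\|\le 2/j\to 0$, so $\{X^k\}_{k\in K}$ converges almost surely to $\hat{X}$ while $\Dm\to 0$ along $K$.

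I expect the genuinely delicate part to be exactly this measurable-selection step: checking that the index sets $\{i_j=i\}$, the point $\hat{X}$, and the random indices $k_j$ are all $\mathcal{G}$-measurable, and that the recursion is well posed. An alternative would be to simply invoke a measurable selection theorem for the multifunction $\omega\mapsto\{\text{limit points of }\{X^k(\omega)\}\}$ (a measurable, compact-valued multifunction into $\mathcal{U}$), as is effectively done in~\cite{audet2019stomads}; spelling out the nested-covering argument above keeps the proof self-contained, whereas the paper evidently prefers to cite the prior result.
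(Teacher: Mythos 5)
Your proposal is correct and follows essentially the same route as the paper, which does not spell out a proof but simply invokes the combination of Corollary~\ref{meshframeConv} (almost sure convergence of $\Dm$ to zero) with the compactness hypothesis of Assumption~\ref{lipschitzAssumption}, citing~\cite{audet2019stomads} for the details. Your first paragraph is exactly that argument, and the nested-covering construction of a measurable $K$ and $\hat{X}$ is a sound, self-contained elaboration of the measurable-selection step that the paper leaves to the cited reference.
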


\subsection{Nonsmooth optimality conditions: {\bl Results} for $h$}\label{forh}
This subsection aims to show with probability one that Algorithm~\ref{algoPB} generates a refining subsequence $\{\Xki\}_{k\in K}$ with refined point $\hat{X}_{\inf}$ which satisfies a first-order necessary optimality condition via the Clarke generalized derivative of $h$. As in~\cite{audet2019stomads}, this optimality result strongly relies on the requirement that the polling directions $d^k\in\mathbb{D}^k_p(\xki)$ of Algorithm~\ref{algoPB} are such that $\dpl\norminf{d^k}$ never approaches zero for all $k$. The way such an expectation can be met is discussed in~\cite{audet2019stomads}. Indeed, by choosing the columns of the matrix $\mathbf{D}$ used in the definition of the mesh $\mathcal{M}^k$ to be the $2n$ positive and negative coordinate directions, $\delta_p^0=1$ and $\tau=1/2$, the directions $\dpl d^k$ were shown in~\cite{audet2019stomads} to satisfy $\dpl\norminf{d^k}\geq 1$ whenever $d^k$ is constructed by means of the so-called Householder matrix~\cite{AuHa2017}. Thus, the following assumption is made for the remainder of the analysis.
\begin{assumption}\label{dmin}
	Let $d^k\in\mathbb{D}^k_p$ be any polling direction used by Algorithm~\ref{algoPB} at iteration $k$. Then there exists a constant $d_{\min}>0$ such that $\dpl\norminf{d^k}\geq d_{\min}\ $ for all $k\geq 0$.
\end{assumption}

%
%

The main result of this subsection relies on the properties of the random function $\Psi_k^h$ introduced next, a similar of which was used in~\cite{audet2019stomads}. 
\begin{lemma}\label{Psikh}
	Let the same assumptions that were made in Theorem~\ref{zerothorderTheorem} hold and assume in addition to~\eqref{betaChoice} that $\alpha\beta\in(1/2,1)$. Consider the random function $\Psi_k^h$ with realizations $\psi_k^h$ defined by
	\begin{equation}
	\psi_k^h:=\frac{h(\xki)-h(\xki+\dm d^k)}{\dpl}\quad\text{for all}\ k\geq 0,  \nonumber
	\end{equation}
	where $d^k\in\mathbb{D}^k_p(\xki)$ denotes any available polling direction around $\xki$ at iteration $k$. Then the following holds, 
	\begin{equation}\label{liminfPsikh}
	\underset{k\to+\infty}{\liminf}\ \Psi_k^h\leq 0 \ \text{almost surely.}
	\end{equation}
\end{lemma}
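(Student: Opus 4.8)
The plan is to argue by contradiction, converting a hypothetically strictly positive $\liminf$ into a forced excess of successful iterations that clashes with $\Dp\to 0$. Suppose $\pr{\liminf_{k\to\infty}\Psi_k^h>0}>0$. Since $\{\liminf_k\Psi_k^h>0\}=\bigcup_{n\geq 1}\{\liminf_k\Psi_k^h>1/n\}$, there is a constant $\epsilon>0$ with $\pr{E_\epsilon}>0$, where $E_\epsilon:=\{\liminf_k\Psi_k^h>\epsilon\}$. I would then work on the intersection $\mathcal{E}$ of $E_\epsilon$ with the two almost-sure events on which $\Dp\to 0$ (Corollary~\ref{meshframeConv}) and $\limsup_k W_k=+\infty$, where $W_k$ is the random walk~\eqref{Wk} from the proof of Theorem~\ref{trueIterations}; then $\pr{\mathcal{E}}=\pr{E_\epsilon}>0$. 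On $\mathcal{E}$ one fixes an index $K_0$ with $\psi_k^h>\epsilon$ for all $k\geq K_0$, and then $K_1\geq K_0$ so large that for every $k\geq K_1$ one has $\dpl<\epsilon/\big((\gamma+2)m\efPB\big)$ and the poll size is never capped at $\dmax$ afterwards (capping can occur only finitely often since $\Dp\to 0$).

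The crux is to show that on $\mathcal{E}$ every \emph{true} iteration $k\geq K_1$ is successful. On a true iteration the event $I_k$ holds, so, with $s^k=\dm d^k$, the $\efPB$-reliable bounds of Proposition~\ref{boundh} satisfy $h(\xki)\leq u^k_0(\xki)$ and $h(\xki+s^k)\geq \ell^k_s(\xki+s^k)$. Since each $\max\{t\pm\efPB(\dpl)^2,0\}$ differs from $\max\{t,0\}$ by at most $\efPB(\dpl)^2$, one has unconditionally $u^k_0(\xki)\leq h^k_0(\xki)+m\efPB(\dpl)^2$ and $\ell^k_s(\xki+s^k)\geq h^k_s(\xki+s^k)-m\efPB(\dpl)^2$. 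Combining these with $\psi_k^h>\epsilon$ yields $\epsilon\dpl<h(\xki)-h(\xki+s^k)\leq h^k_0(\xki)-h^k_s(\xki+s^k)+2m\efPB(\dpl)^2$, hence $h^k_s(\xki+s^k)-h^k_0(\xki)<2m\efPB(\dpl)^2-\epsilon\dpl<-\gamma m\efPB(\dpl)^2$ by the choice of $K_1$. As $\xki+\dm d^k\in\Pk$ is an already evaluated poll point around $\xki$, the Improving test of Algorithm~\ref{algoPB} is met, so iteration $k$ is $f$-Dominating, $h$-Dominating, or Improving; in each case $\delta_p^{k+1}=\min\{\tau^{-1}\dpl,\dmax\}$, i.e.\ iteration $k$ is successful.

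It then remains to count. For $k\geq K_1$, let $S_k,U_k,T_k,F_k$ be the numbers of successful, unsuccessful, true, and false iterations among $\{K_1,\dots,k-1\}$, so $S_k+U_k=T_k+F_k=k-K_1$. The previous step gives ``true $\Rightarrow$ successful'' on $\mathcal{E}$, hence $T_k\leq S_k$, and by complementation $U_k\leq F_k$; together with the equality above these force $S_k-T_k=F_k-U_k\geq 0$. Since the poll size is not capped for $k\geq K_1$, the update rules give $\dpl=\delta_p^{K_1}\tau^{\,U_k-S_k}$ on $\mathcal{E}$, so, since $\Dp\to 0$ and $\tau\in(0,1)$, necessarily $U_k-S_k\to+\infty$; therefore $F_k-T_k=(U_k-S_k)+2(S_k-T_k)\to+\infty$, which means $W_{k-1}=W_{K_1-1}-(F_k-T_k)\to-\infty$ on $\mathcal{E}$. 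This contradicts $\limsup_k W_k=+\infty$, which holds on $\mathcal{E}$. Hence $\pr{E_\epsilon}=0$ for every $\epsilon>0$, so $\pr{\liminf_k\Psi_k^h>0}=0$, which is exactly~\eqref{liminfPsikh}.

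The step I expect to be most delicate is this last one: making the implication ``$\Dp\to 0\Rightarrow U_k-S_k\to+\infty$'' rigorous despite the non-monotonicity of $\{\Dp\}$ and the finitely many capped steps, and ensuring that the key step uses only the bound-reliability event $I_k$ (one component of a true iteration) together with the specific form of the bounds in Proposition~\ref{boundh}. By contrast, the $m\efPB(\dpl)^2$-type estimates and the ``not Unsuccessful'' deduction are routine once these definitions are unpacked.
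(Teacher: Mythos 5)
Your proof is correct and follows essentially the same route as the paper's: argue by contradiction, use the reliability event $I_k$ together with Proposition~\ref{boundh} to show that once $\dpl<\epsilon/\big((\gamma+2)m\efPB\big)$ every true iteration triggers at least the Improving test and is therefore successful, and then contradict the almost sure fact $\limsup_k W_k=+\infty$ for the random walk~\eqref{Wk}. Your explicit success/failure counting (showing $W_k\to-\infty$ on a positive-probability event) simply replaces the paper's pathwise comparison $r_k-r_{k_0}\geq w_k-w_{k_0}$ via the auxiliary quantity $R_k=-\log_\tau(\Dp/\lambda)$, but the substance is identical.
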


The following definition of refining directions~\cite{AuDe2006,AuHa2017} will be useful in the analysis.
\begin{definition}\label{refiningDirPB}
	Let $\hat{x}$ be the refined point associated to a convergent refining subsequence $\{x^k\}_{k\in \mathcal{K}}$. A~direction $v$ is said to be a refining direction for $\hat{x}$ if and only if there exists an infinite subset $\mathcal{L}\subseteq \mathcal{K}$ with polling directions $d^k\in\mathbb{D}^k_p(x^k)$ such that $v=\underset{k\in \mathcal{L}}{\lim}\frac{d^k}{\norminf{d^k}}$.
\end{definition}
The analysis in this subsection also relies on the following definitions~\cite{AuDe09a}. The Clarke generalized derivative $h^{\circ}(\hat{x};v)$ of $h$ at $\hat{x}\in\X$ in the direction $v\in\rn$ is defined by
\begin{equation}\label{ClarkeDef}
h^{\circ}(\hat{x};v):=\underset{t\searrow 0,\ y+tv\in\X}{\underset{y\to \hat{x},\ y\in\X}{\limsup}} \frac{h(y+tv)-h(y)}{t}.
\end{equation}
As highlighted in~\cite{AuDe09a}, this definition from~\cite{Jahn94a} is a generalization of the original one by Clarke~\cite{Clar83a} to the case where the constraints violation function $h$ is not defined outside $\X$. 

The analysis involves a specific cone $T^H_{\X}(\hat{x}_{\inf})$ called the hypertangent cone~\cite{Rock80a} to $\X$ at $\hat{x}_{\inf}$. The hypertangent cone to a subset $\mathcal{O}\subseteq\X$ at~$\hat{x}$ is defined by
\begin{equation}
T^H_{\mathcal{O}}(\hat{x}):=\{v\in\rn:\exists\bar{\epsilon}>0\ \text{such that}\  y+tw\in\mathcal{O}\ \forall y\in\mathcal{O}\cap\mathcal{B}_{\bar{\epsilon}}(\hat{x}), w\in\mathcal{B}_{\bar{\epsilon}}(v)\ \text{and}\ 0<t<\bar{\epsilon} \}. \nonumber
\end{equation}
Next is stated a lemma~\cite{AuDe09a} from elementary analysis, that will be useful latter in the present analysis.
\begin{lemma}\label{akbk}
	If $\{a_k\}$ is a bounded real sequence and $\{b_k\}$ is a convergent real sequence, then 
	\begin{equation}
	\limsup_k(a_k+b_k)=\limsup_k a_k+\lim_k b_k. \nonumber
	\end{equation}
\end{lemma}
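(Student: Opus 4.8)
The plan is to prove the two inequalities $\limsup_k(a_k+b_k)\le\limsup_k a_k+\lim_k b_k$ and $\limsup_k(a_k+b_k)\ge\limsup_k a_k+\lim_k b_k$ separately, relying only on the general subadditivity property of $\limsup$ for bounded sequences, namely $\limsup_k(x_k+y_k)\le\limsup_k x_k+\limsup_k y_k$. First I would record that since $\{a_k\}$ is bounded and $\{b_k\}$ converges (hence is bounded), all the sequences appearing below are bounded, so every $\limsup$ involved is a finite real number and the arithmetic manipulations are legitimate. Write $L:=\lim_k b_k$, and note that convergence gives $\limsup_k b_k=\limsup_k(-b_k)\cdot(-1)=L$ and $\limsup_k(-b_k)=-L$.

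For the first inequality, I would apply subadditivity with $x_k=a_k$ and $y_k=b_k$ to get $\limsup_k(a_k+b_k)\le\limsup_k a_k+\limsup_k b_k=\limsup_k a_k+L$. For the reverse inequality, the key trick is to write $a_k=(a_k+b_k)+(-b_k)$ and apply subadditivity again: $\limsup_k a_k\le\limsup_k(a_k+b_k)+\limsup_k(-b_k)=\limsup_k(a_k+b_k)-L$, which rearranges to $\limsup_k a_k+L\le\limsup_k(a_k+b_k)$. Combining the two displays yields the claimed equality.

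This argument is entirely routine, so there is no real obstacle; the only point requiring a little care is the bookkeeping around $\limsup_k(-b_k)=-L$, which is exactly where the hypothesis that $\{b_k\}$ is \emph{convergent} (not merely bounded) is used, since for a general bounded sequence one would only have $\limsup_k(-b_k)=-\liminf_k b_k$ and the identity would fail. Alternatively, one could give a subsequential proof — extract from $\{a_k\}$ a subsequence realizing $\limsup_k a_k$, add the (convergent) corresponding terms of $\{b_k\}$ to get ``$\ge$'', and from a subsequence realizing $\limsup_k(a_k+b_k)$ extract a further subsequence along which $\{a_k\}$ converges to get ``$\le$'' — but the subadditivity route above is shorter and is the one I would write down.
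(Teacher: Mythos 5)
Your proof is correct: the subadditivity of $\limsup$ applied once to $a_k+b_k$ and once to the decomposition $a_k=(a_k+b_k)+(-b_k)$, together with $\limsup_k(-b_k)=-\lim_k b_k$ for a convergent sequence, gives both inequalities cleanly, and you correctly identify where convergence (rather than mere boundedness) of $\{b_k\}$ is essential. The paper itself offers no proof to compare against --- it states this as an elementary-analysis lemma imported from the MADS-with-progressive-barrier reference --- so your argument stands on its own as a standard and complete justification.
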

The next result is a stochastic variant of Theorem~3.5 in~\cite{AuDe09a}. Since the inequality $h(\xki+\dm d^k)-h(\xki)\geq 0$ on which relies the latter theorem does not hold in the present stochastic setting, then the proof of the result below is based on the random function $\Psi_k^h$ $\liminf$-type result of Lemma~\ref{Psikh}.
\begin{theorem}\label{resulth}
	Let Assumptions~\ref{assumptA1}, \ref{dmin} and all the assumptions made in Theorem~\ref{zerothorderTheorem} and Lemma~\ref{Psikh} hold. Then Algorithm~\ref{algoPB} generates a convergent $\efPB$-infeasible refining subsequence $\{\Xki\}_{k\in K}$, for some sequence $K\subseteq K'$ of random variables satisfying $\lim_{K'} \Psi_k^h\leq 0$ almost surely, such that if $\hat{x}_{\inf}\in\X$ is a refined point for a realization $\{\xki\}_{k\in \mathcal{K}}$ of $\{\Xki\}_{k\in K}$ for which the events $\Dp\to 0$ and $\lim_{K'} \Psi_k^h\leq 0$ both occur,
	and if $v\in T^H_{\X}(\hat{x}_{\inf})$ is a refining direction for $\hat{x}_{\inf}$, then $h^{\circ}(\hat{x}_{\inf};v)\geq 0$. In particular, this means that
	\begin{equation}\label{pvtrois}
	\begin{split}
	\pb\left( \left\lbrace \omega\in\Omega:\exists K(\omega)\subseteq\N\ \text{and}\ \exists\hat{X}_{\inf}(\omega)\right.\right.&= \lim_{k\in K(\omega)}\Xki(\omega), \hat{X}_{\inf}(\omega)\in\X, \ \text{such that} \\
	\forall V(\omega)\in& \left.\left. T^H_{\mathcal{X}}(\hat{X}_{\inf}(\omega)),\   h^{\circ}(\hat{X}_{\inf}(\omega);V(\omega))\geq 0\right\rbrace\right)=1. 
	\end{split} 
	\end{equation}
\end{theorem}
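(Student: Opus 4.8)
The plan is to derive $h^{\circ}(\hat{x}_{\inf};v)\geq 0$ straight from the definition~\eqref{ClarkeDef} of the Clarke generalized derivative, using Lemma~\ref{Psikh} as the stochastic replacement for the deterministic inequality ``$h(\xki+\dm d^k)-h(\xki)\geq 0$ at unsuccessful iterations'' that is unavailable here. First I would fix a sample point in the intersection of the two probability-one events ``$\Dp\to 0$'' (Corollary~\ref{meshframeConv}, which rests on Theorem~\ref{zerothorderTheorem}) and ``there exists a subsequence $K'$ along which $\Psi_k^h\to\ell$ for some $\ell\leq 0$'' (extracted from $\liminf_k\Psi_k^h\leq 0$ in Lemma~\ref{Psikh}). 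On this event the infeasible incumbents $\Xki$ all lie in the compact set $\mathcal U\subseteq\X$ of Assumption~\ref{lipschitzAssumption}, so a further subsequence $K\subseteq K'$ satisfies $\Xki\to\hat{X}_{\inf}\in\mathcal U\subseteq\X$; since $\Dm\to 0$ this $\{\Xki\}_{k\in K}$ is a refining subsequence, which settles the existence claim. Note that the $d^k$ defining $\Psi_k^h$ is the same direction used to build the refining direction, so the two constructions stay compatible.

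Next, fixing a realization $\{\xki\}_{k\in\mathcal K}$ with refined point $\hat{x}_{\inf}$ and a refining direction $v\in T^H_{\X}(\hat{x}_{\inf})$, I would select the further subsequence $\mathcal L\subseteq\mathcal K$ along which $w_k:=d^k/\norminf{d^k}\to v$ with $d^k\in\mathbb D^k_p(\xki)$, and set $t_k:=\dm\norminf{d^k}>0$. Then $t_k\leq\dpl b\to 0$ by the frame definition in Definition~\ref{meshpollset}, and because $\dpl\to 0$ one eventually has $\dpl\leq 1$, hence $\dm=(\dpl)^2$, so that $\dpl/(\dm\norminf{d^k})=1/(\dpl\norminf{d^k})\in[1/b,\,1/d_{\min}]$, using Assumption~\ref{dmin} together with $\dm\norminf{d^k}\leq\dpl b$. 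For $k\in\mathcal L$ large, the hypertangent property of $v$, combined with $\xki\to\hat{x}_{\inf}$, $w_k\to v$ and $t_k\to 0^+$, places $\xki$, $\xki+t_kw_k=\xki+\dm d^k$ and $\xki+t_kv$ all in $\X$, so $h$ is finite at each, and~\eqref{ClarkeDef} yields
\[
h^{\circ}(\hat{x}_{\inf};v)\ \geq\ \limsup_{k\in\mathcal L}\frac{h(\xki+t_kv)-h(\xki)}{t_k}.
\]
Splitting this quotient as $a_k+b_k$ with $a_k=\big(h(\xki+\dm d^k)-h(\xki)\big)/(\dm\norminf{d^k})$ and $b_k=\big(h(\xki+t_kv)-h(\xki+t_kw_k)\big)/t_k$, the local Lipschitzness of $h$ (Assumption~\ref{lipschitzAssumption}) makes $\{a_k\}$ bounded and forces $b_k\to 0$, so Lemma~\ref{akbk} gives $\limsup_{k\in\mathcal L}(a_k+b_k)=\limsup_{k\in\mathcal L}a_k$.

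To finish, I would write $a_k=-\psi_k^h\cdot\dpl/(\dm\norminf{d^k})$ and use $\mathcal L\subseteq K'$, which forces $\psi_k^h\to\ell\leq 0$: if $\ell=0$, the bounded scaling factor gives $a_k\to 0$, hence $\limsup_{k\in\mathcal L}a_k=0$; if $\ell<0$ (allowing $\ell=-\infty$), the scaling factor being at least $1/b>0$ gives $\liminf_{k\in\mathcal L}a_k\geq -\ell/b>0$. In both cases $\limsup_{k\in\mathcal L}a_k\geq 0$, whence $h^{\circ}(\hat{x}_{\inf};v)\geq 0$. Since this argument applies to every refining direction $v\in T^H_{\X}(\hat{x}_{\inf})$ and the two events above have probability one, the probability-one statement~\eqref{pvtrois} follows; the passage from refining directions to all of $T^H_{\X}(\hat{x}_{\inf})$ is the same density/continuity argument as in deterministic MADS (density of refining directions in the unit sphere from the Householder-type generation of $\mathbb D^k_p$ recalled before Assumption~\ref{dmin}, together with continuity of $v\mapsto h^{\circ}(\hat{x}_{\inf};v)$ on the hypertangent cone). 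The main obstacle is conceptual rather than computational: inaccurate estimates rule out the iteration-by-iteration reasoning of the deterministic proof, so everything must be funnelled through the single $\liminf$-guarantee of Lemma~\ref{Psikh}, and one must (i) keep $\mathcal L$ inside $K'$ so that $\psi_k^h\to\ell\leq 0$ survives the nested extractions and (ii) control $\dpl/(\dm\norminf{d^k})$ via Assumption~\ref{dmin} so that the sign of $\ell$ actually transfers to $\limsup_{k\in\mathcal L}a_k$.
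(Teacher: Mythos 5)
Your proposal is correct and follows essentially the same route as the paper: condition on the intersection of the almost-sure events $\{\Dp\to 0\}$ and $\{\lim_{K'}\Psi_k^h\leq 0\}$, extract the refining subsequence by compactness of $\mathcal{U}$, decompose the Clarke difference quotient into the poll-direction quotient $a_k$ plus a correction $b_k\to 0$ controlled by local Lipschitzness, apply Lemma~\ref{akbk}, and transfer the sign of $\lim\Psi_k^h$ to $\limsup a_k$ via Assumption~\ref{dmin}. Your only departures are cosmetic but welcome: you anchor the quotient at $\xki$ rather than at the auxiliary point $y^k=\xki+t_k(d^k/\norminf{d^k}-v)$ used in the paper, and your endgame (splitting $\ell=0$ from $\ell<0$ and bounding the scaling factor $1/(\dpl\norminf{d^k})$ in $[1/b,1/d_{\min}]$) is actually slightly more careful than the paper's intermediate inequality, which quotes $d_{\min}^{-1}$ where the tight constant is $b^{-1}$ yet still reaches the same conclusion $\geq 0$.
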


Next is stated a stochastic variant of a result in~\cite{AuDe09a}, showing that Clarke stationarity is ensured when the set of refining directions is dense in a nonempty hypertangent cone to $\X$.
\begin{corollary}
	Let all assumptions that were made in Theorem~\ref{resulth} hold.
	Let $\{\Xki\}_{k\in K}$ be the $\efPB$-infeasible refining subsequence of Theorem~\ref{resulth}, with realizations $\{\xki\}_{k\in \mathcal{K}}$ which converges to a refined point $\hat{x}_{\inf}\in\X$. 
	If the set of refining directions for $\hat{x}_{\inf}$ is dense in $T^H_{\X}(\hat{x}_{\inf})\neq\emptyset$, then $\hat{x}_{\inf}$ is a Clarke stationary point for the problem $\ \ds{\min_{x\in\X}h(x)}$.
\end{corollary}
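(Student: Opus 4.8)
The plan is to derive this corollary as a direct consequence of Theorem~\ref{resulth} together with the standard density argument used in deterministic \mads analyses. First I would invoke Theorem~\ref{resulth} to obtain, for almost every outcome $\omega$, an $\efPB$-infeasible refining subsequence $\{\xki\}_{k\in\mathcal{K}}$ converging to the refined point $\hat{x}_{\inf}\in\X$, with the property that for every refining direction $v\in T^H_{\X}(\hat{x}_{\inf})$ one has $h^{\circ}(\hat{x}_{\inf};v)\geq 0$. The key observation is that the Clarke generalized derivative $v\mapsto h^{\circ}(\hat{x}_{\inf};v)$ is Lipschitz continuous (indeed positively homogeneous and subadditive) on $\rn$ because $h$ is locally Lipschitz near $\hat{x}_{\inf}$ by Assumption~\ref{lipschitzAssumption}; this continuity is exactly what lets us upgrade ``nonnegative on a dense subset'' to ``nonnegative on the whole cone.''

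The main steps, in order, would be: (1) fix a realization $\omega$ in the probability-one event furnished by Theorem~\ref{resulth} and let $\hat{x}_{\inf}$ be the corresponding refined point; (2) take an arbitrary $w\in T^H_{\X}(\hat{x}_{\inf})$, and using the density hypothesis pick a sequence of refining directions $v_i\to w$; (3) apply Theorem~\ref{resulth} to each $v_i$ to get $h^{\circ}(\hat{x}_{\inf};v_i)\geq 0$; (4) pass to the limit using continuity of $h^{\circ}(\hat{x}_{\inf};\cdot)$ to conclude $h^{\circ}(\hat{x}_{\inf};w)\geq 0$; (5) since $w$ was arbitrary in the hypertangent cone, and since $h^{\circ}(\hat{x}_{\inf};\cdot)$ is continuous, the same inequality extends by continuity to the closure of $T^H_{\X}(\hat{x}_{\inf})$, which contains the (Clarke) tangent cone $T^{Cl}_{\X}(\hat{x}_{\inf})$; hence $h^{\circ}(\hat{x}_{\inf};w)\geq 0$ for all $w\in T^{Cl}_{\X}(\hat{x}_{\inf})$, which is precisely the definition of $\hat{x}_{\inf}$ being a Clarke stationary point of $\min_{x\in\X}h(x)$.

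I expect the only subtlety — hardly an obstacle — to be the careful handling of which cone the stationarity statement refers to: Theorem~\ref{resulth} gives nonnegativity of $h^{\circ}$ on refining directions lying in the hypertangent cone, the density hypothesis is stated in the hypertangent cone, and the conclusion ``Clarke stationary'' is naturally phrased on the tangent (Clarke) cone, which is the closure of the hypertangent cone when the latter is nonempty (a standard fact from~\cite{Rock80a}). One must therefore be slightly careful to note that $h^{\circ}(\hat{x}_{\inf};\cdot)$, being finite and Lipschitz on $\rn$, extends its nonnegativity from the dense set of refining directions first to all of $T^H_{\X}(\hat{x}_{\inf})$ and then, by a second continuity/closure argument, to $\mathrm{cl}\,T^H_{\X}(\hat{x}_{\inf}) = T^{Cl}_{\X}(\hat{x}_{\inf})$. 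Everything else is a verbatim adaptation of the corresponding deterministic corollary in~\cite{AuDe09a}, with the probability-one qualifier carried along from Theorem~\ref{resulth}; no new probabilistic argument is needed since all the martingale and poll-size machinery has already been expended in establishing Theorem~\ref{resulth}.
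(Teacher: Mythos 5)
Your proposal is correct and follows essentially the same route as the paper, which simply defers to Corollary~3.6 of~\cite{AuDe09a}: use Theorem~\ref{resulth} on the dense set of refining directions, then extend nonnegativity of $h^{\circ}(\hat{x}_{\inf};\cdot)$ by continuity first to $T^H_{\X}(\hat{x}_{\inf})$ and then to its closure, the Clarke tangent cone. The only point to state more carefully is that, because $h$ equals $+\infty$ outside $\X$ and the Jahn-type derivative~\eqref{ClarkeDef} only samples points of $\X$, the map $v\mapsto h^{\circ}(\hat{x}_{\inf};v)$ is not simply ``Lipschitz on $\rn$''; the continuity you need is the one established in~\cite{AuDe2006} on the hypertangent cone together with the limit formula relating values on $T^H_{\X}(\hat{x}_{\inf})$ to those on the Clarke tangent cone, which is exactly what justifies your steps (4)--(5).
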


\begin{proof}
	The proof of this result is almost identical to the proof of a similar result (Corollary 3.6) in~\cite{AuDe09a} and hence will not be presented here again. 
\end{proof}
\subsection{Nonsmooth optimality conditions: {\bl Results} for $f$}\label{forf}
The analysis presented in this subsection assumes that Algorithm~\ref{algoPB} generates infinitely many $\efPB$-feasible points. It aims to show with probability one that StoMADS-PB generates a refining subsequence $\{\Xkf\}_{k\in K}$ with refined point $\Xhatf$, which satisfies a first-order necessary optimality condition based on the Clarke derivative of $f$. The following lemma will be useful latter in the analysis.
\begin{lemma}\label{Psikf}
	Let the same assumptions that were made in Theorem~\ref{zerothorderTheorem} hold and assume in addition to~\eqref{betaChoice} that $\alpha\beta\in(1/2,1)$. Assume that the random time $T$ with realizations $t$ is finite almost surely. Consider the random function $\Psi_k^{f,T}$ with realizations $\psi_k^{f,t}$  defined by
	\begin{equation}\label{psikf}
	\psi_k^{f,t}:=\frac{f(\xkft)-f(\xkft+\dm d^k)}{\dpl}\quad\text{for all}\ k\geq 0,  \nonumber
	\end{equation}
	where $k\vee t:=\max\{k,t\}$ and $d^k$ denotes any available polling direction around $\xkft$ at iteration~$k$. Then the following holds, 
	\begin{equation}\label{liminfPsikf}
	\underset{k\to+\infty}{\liminf}\ \Psi_k^{f,T}\leq 0 \ \text{almost surely.} 
	\end{equation}
\end{lemma}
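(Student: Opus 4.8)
The plan is to imitate the proof of Lemma~\ref{Psikh}, transposing it from the $\efPB$-infeasible incumbent $\Xki$, the violation function $h$ and the $\alpha$-probabilistic reliability of the bounds $\{\Lok,\Uok,\Lsk,\Usk\}$ to the $\efPB$-feasible incumbent $\xkft$, the objective $f$ and the $\beta$-probabilistic $\efPB$-accuracy of $\{\Fok,\Fsk\}$. The one genuinely new feature is that, because $T$ is assumed almost surely finite with realization $t$, Remark~\ref{remarkFCF} ensures that for every $k\ge t$ the feasibility flag is \textsc{true}; hence for such $k$ one has $\xkft=\xkf$, the $\efPB$-feasible incumbent is updated exactly through the dominance relation $x^k+s^k\domf\xkf$, and all the estimates attached to the feasible incumbent are $\fcf$-measurable, so that Assumption~\ref{randomEstim} applies to them.

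I would argue by contradiction. Suppose $\pb\big(\{\liminf_k\Psi_k^{f,T}>0\}\big)>0$. Writing this event as $\bigcup_{\zeta\in\mathbb{Q},\,\zeta>0}\{\liminf_k\Psi_k^{f,T}>\zeta\}$, there is a rational $\zeta>0$ with $\pb(E_\zeta)>0$, where $E_\zeta:=\{\liminf_k\Psi_k^{f,T}>\zeta\}$. On $E_\zeta$ there is an index $k_0$ such that $f(\xkft)-f(\xkft+\dm d^k)>\zeta\dpl$ for all $k\ge k_0$; that is, polling around the feasible incumbent always produces, along $d^k$, a trial point with strictly smaller true objective value. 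By Corollary~\ref{meshframeConv}, $\Dp\to 0$ almost surely, so (enlarging $k_0$) I may also assume $\dpl\le\zeta/\big((\gamma+2)\efPB\big)$ for $k\ge k_0$ on $E_\zeta$. Then on a \emph{true} iteration $k\ge\max\{k_0,t\}$ (so $\iik\ijk=1$, hence $\fok,\fsk$ are $\efPB$-accurate for $f(\xkft)$ and $f(\xkft+\dm d^k)$),
\[
\fsk-\fok\ \le\ f(\xkft+\dm d^k)-f(\xkft)+2\efPB(\dpl)^2\ <\ -\zeta\dpl+2\efPB(\dpl)^2\ \le\ -\gamma\efPB(\dpl)^2 ,
\]
so the sufficient-decrease test on $f$ is met. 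Provided the trial point $\xkft+\dm d^k$ is moreover $\efPB$-feasible, iteration $k$ is then $f$-Dominating: the feasible incumbent is updated to $\xkft+\dm d^k$, $\delta_p^{k+1}=\min\{\tau^{-1}\dpl,\dmax\}$, and on $E_\zeta$ the objective decreases by more than $\zeta\dpl$ along the feasible incumbents.

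By Theorem~\ref{trueIterations} (applicable since $\alpha\beta\in(1/2,1)$ is assumed), true iterations occur infinitely often almost surely, so on $E_\zeta$ infinitely many iterations yield such a genuine decrease of $f$. Since $f$ is bounded below by $\kappa^f_{\min}$ (Assumption~\ref{fisbounded}), these accumulated decreases, together with the dynamics of $\{\Dp\}$ and the summability $\sum_k(\Dp)^2<+\infty$ of Theorem~\ref{zerothorderTheorem}, are incompatible — exactly as in the proof of Lemma~\ref{Psikh} and of Theorem~\ref{zerothorderTheorem}. This contradiction proves $\liminf_k\Psi_k^{f,T}\le 0$ almost surely, i.e.\ \eqref{liminfPsikf}.

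I expect the main obstacle to be the step ``$f$-test passed on a true iteration $\Rightarrow$ the iteration is successful''. This requires the trial point $\xkft+\dm d^k$ to be $\efPB$-feasible, i.e.\ $\usk(\xkft+\dm d^k)=0$, which may fail when an accumulation point of $\{\xkft\}$ lies on the boundary of the feasible region: there the (accurate) estimates $\csk$ need not be negative enough to force $\usk=0$, so the accounting must be restricted to polling directions along which $\xkft+\dm d^k$ stays (eventually) $\efPB$-feasible, with the tangency issues pushed into the refining-direction analysis of Section~\ref{forf}. A second delicate point, inherited from Lemma~\ref{Psikh}, is that on \emph{false} iterations an apparent decrease of $f$ along the incumbents may be spurious; the accounting of genuine decreases must therefore be carried out on $E_\zeta$ (where $f(\xkft)-f(\xkft+\dm d^k)>\zeta\dpl$ holds regardless of the quality of the estimates) and combined with a martingale/renewal argument controlling the poll size parameter, in the spirit of the proof of Theorem~\ref{zerothorderTheorem}.
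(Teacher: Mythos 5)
Your proposal follows essentially the same route as the paper's proof: argue by contradiction on the positive-probability event where $\Psi_k^{f,T}$ stays above some $\zeta>0$, use $\Dp\to 0$ to make $\dpl<\zeta/\bigl(\efPB(\gamma+2)\bigr)$ for $k$ large, observe that on every true iteration $k\ge\max\{\bar k_0,t\}$ the three-term decomposition gives $\fsk-\fok\le -(\gamma+2)\efPB(\dpl)^2+2\efPB(\dpl)^2=-\gamma\efPB(\dpl)^2$ so the iteration cannot be Unsuccessful, and then close with the submartingale/random-walk comparison of Lemma~\ref{Psikh} (the paper's contradiction comes entirely from that comparison forcing $R_k$, hence $\Dp$, to be large infinitely often; the boundedness of $f$ from below and the summability of $(\Dp)^2$ that you also invoke play no role in this step). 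The one obstacle you flag --- that $x^k+s^k\domf\xkf$ also requires $\usk(x^k+s^k)=0$, so passing the sufficient-decrease test on $f$ does not by itself make the iteration $f$-Dominating --- is precisely the point at which the paper simply asserts that the iteration ``can not be unsuccessful'' without further justification, so your hesitation identifies a subtlety in the paper's own argument rather than a defect specific to your version.
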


Now let prove that the almost sure limit $\Xhatf$ of any convergent refining subsequence of $\efPB$-feasible iterates which drives the random estimated violations $\Hok(\Xkf)$ to zero almost surely, satisfies $\pr{\Xhatf\in\D}=1$. First, notice that the existence of such a refining subsequence can be assumed. Indeed, it is known from Theorem~\ref{trueIterations} that true iterations occur infinitely often provided that estimates and bounds are sufficiently accurate. In addition, every $\efPB$-feasible point $\xkf$ newly accepted by Algorithm~\ref{algoPB} satisfies $\uok(\xkf)=0$, which implies that $\hok(\xkf)=0$, thus leading to the overall conclusion that $\underset{k\to+\infty}{\liminf}\ \Hok(\Xkf)=0$ almost surely, which is implicitly assumed next.
\begin{theorem}\label{xhatD}
	Let all the assumptions of Lemma~\ref{Psikf} hold. Let $\Xhatf$ be the almost sure limit of a convergent $\efPB$-feasible refining subsequence $\{\Xkft\}_{k\in K}$ for which $\underset{k\in K}{\lim} \Hok(\Xkft)=0$ almost surely. Then
	\begin{equation}\label{XinD} 
	\pr{\Xhatf\in\D}=1.
	\end{equation} 
\end{theorem}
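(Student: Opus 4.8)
The plan is to show that the refined point $\Xhatf$ lies in $\D = \{x \in \X : c(x) \le 0\}$ by establishing that $h(\Xhatf) = 0$ almost surely, since by the definition of $h$ this is equivalent to feasibility with respect to the relaxable constraints. First I would invoke continuity: under Assumption~\ref{lipschitzAssumption}, $h$ is locally Lipschitz (hence continuous) on the compact set $\U$ containing all iterate realizations, so along the refining subsequence $\{\Xkft\}_{k\in K}$ converging almost surely to $\Xhatf$, we have $h(\Xkft) \to h(\Xhatf)$ almost surely. It therefore suffices to show that $h(\Xkft) \to 0$ almost surely along this subsequence.

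The key step is to control $h(\Xkft)$ by the estimated violation $\Hok(\Xkft)$ plus an error term that vanishes. I would write, for each $k \in K$,
\begin{equation}
h(\Xkft) \le \abs{h(\Xkft) - \Hok(\Xkft)} + \Hok(\Xkft),\nonumber
\end{equation}
using the triangle inequality together with the fact that $h \ge 0$ wherever it is finite (and finiteness is guaranteed since the iterates lie in $\X$). By hypothesis, $\lim_{k\in K}\Hok(\Xkft) = 0$ almost surely. For the first term, I would apply Corollary~\ref{corHFok}, which states precisely that $\abs{\Hok - h(X^k)} \to 0$ almost surely; here one must be a little careful that $\Xkft = X^{k\vee t}_{\textnormal{feas}}$ is indeed (for $k$ large, since $t$ is finite almost surely by the hypotheses inherited from Lemma~\ref{Psikf}) one of the incumbent iterates $\Xkf$ to which the corollary applies, so the estimate $\abs{\Hok(\Xkft) - h(\Xkft)} \to 0$ almost surely follows. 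Combining these two limits gives $\limsup_{k\in K} h(\Xkft) \le 0$, hence $h(\Xkft) \to 0$ almost surely, and by continuity $h(\Xhatf) = 0$ almost surely, i.e. $\pr{\Xhatf \in \D} = 1$.

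The main obstacle I anticipate is the bookkeeping around the index shift $k \vee t$ and the event on which everything holds: one needs the almost sure finiteness of $T$ (so that eventually $k \vee t = k$ and $\Xkft$ is a genuine Algorithm~\ref{algoPB} iterate to which Corollary~\ref{corHFok} applies), the almost sure convergence $\Dp \to 0$ (needed for Corollary~\ref{corHFok}), the almost sure event $\{\lim_{k\in K}\Hok(\Xkft) = 0\}$, and the almost sure convergence $\Xkft \to \Xhatf$; intersecting these four almost sure events yields a set of full measure on which the deterministic continuity-and-triangle-inequality argument runs, and care is needed to phrase the conclusion as a probability-one statement about $\Xhatf \in \D$ rather than about a particular realization. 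The analytic content beyond that is essentially routine.
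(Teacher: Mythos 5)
Your proposal is correct and follows essentially the same route as the paper: both combine Corollary~\ref{corHFok} (so that $\abs{\Hok(\Xkft)-h(\Xkft)}\to 0$ almost surely) with the hypothesis $\lim_{k\in K}\Hok(\Xkft)=0$ to conclude $h(\Xkft)\to 0$, then invoke continuity of $h$ on $\X$ to get $h(\Xhatf)=0$, working on the intersection of the relevant almost sure events including $\{T<+\infty\}$. Your explicit triangle-inequality step and the bookkeeping remarks about $k\vee t$ are, if anything, slightly more careful than the paper's terse chain of equalities.
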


The following result is a stochastic variant of Theorem~3.3 in~\cite{AuDe09a}.
\begin{theorem}\label{resultf}
	Let Assumptions~\ref{assumptA1}, \ref{dmin} and all assumptions that were made in Theorem~\ref{zerothorderTheorem} and Lemma~\ref{Psikf} hold. 
	Let $\{\Xkft\}_{k\in K}$ be an almost surely convergent $\efPB$-feasible refining subsequence, for some sequence $K$ of random variables satisfying $\lim_{K} \Psi_k^{f,T}\leq 0$ and $\lim_K \Hok(\Xkft)=0$ almost surely. Then, if $\xhatf\in\D$ is a refined point for a realization $\{\xkft\}_{k\in \mathcal{K}}$ of $\{\Xkft\}_{k\in K}$ for which the events $\Dp\to 0$, $\lim_{K} \Psi_k^{f,T}\leq 0$ and $\lim_K \Hok(\Xkft)=0$ occur, and if $v\in T^H_{\D}(\xhatf)$ is a refining direction for $\xhatf$, then $f^{\circ}(\xhatf;v)\geq 0$. In particular, this means that
	\begin{equation}\label{pvquatre}
	\begin{split}
	\pb\left( \left\lbrace \omega\in\Omega:\exists K(\omega)\subseteq\N\ \text{and}\ \exists\Xhatf(\omega)\right.\right.&= \lim_{k\in K(\omega)}\Xkft(\omega),  \Xhatf(\omega)\in\D,\ \text{such that} \\
	\forall V(\omega)\in& \left.\left. T^H_{\D}(\Xhatf(\omega)),\   f^{\circ}(\Xhatf(\omega);V(\omega))\geq 0\right\rbrace\right)=1. 
	\end{split} 
	\end{equation}
\end{theorem}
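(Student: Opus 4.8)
The plan is to mirror the proof of Theorem~\ref{resulth} for $h$, but now working with the $\efPB$-feasible refining subsequence and the random function $\Psi_k^{f,T}$ instead of $\Psi_k^h$, while additionally invoking Theorem~\ref{xhatD} to guarantee that the refined point lies in $\D$. First I would fix a sample point $\omega$ in the probability-one event on which all of the following occur simultaneously: $T(\omega)<+\infty$ (Assumption~\ref{stoppingAssumption}), $\Dp\to 0$ (Corollary~\ref{meshframeConv}), $\liminf_k \Psi_k^{f,T}\le 0$ with a subsequence $K$ realizing this $\liminf$ as a genuine limit (Lemma~\ref{Psikf}), $\lim_K \Hok(\Xkft)=0$ (the hypothesis, justified by the preceding paragraph), $\lim_K\abs{\Fok-f(X^k)}=0$ and the analogous statement at the trial points (Corollary~\ref{corHFok}), and the existence of a convergent refining subsequence with refined point $\xhatf$ (guaranteed by Corollary~\ref{meshframeConv} together with the compactness in Assumption~\ref{lipschitzAssumption}). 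By Theorem~\ref{xhatD}, on this event $\xhatf\in\D$, so the Clarke derivative $f^{\circ}(\xhatf;v)$ is well defined for $v$ in the hypertangent cone $T^H_{\D}(\xhatf)$.

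Next I would extract, along the subsequence $\mathcal{K}$, a refining direction: pass to a further infinite subset $\mathcal{L}\subseteq\mathcal{K}$ so that $d^k/\norminf{d^k}\to v$, which exists by compactness of the unit sphere, and fix any $v\in T^H_{\D}(\xhatf)$ that arises this way. The core estimate is then a decomposition of the difference quotient
\begin{equation*}
\frac{f(\xkft+\dm d^k)-f(\xkft)}{\dm\norminf{d^k}},
\end{equation*}
which I want to show has nonnegative $\limsup$ along $\mathcal{L}$. Writing $f(\xkft+\dm d^k)-f(\xkft)$ as the sum of $[f(\xkft+\dm d^k)-\Fsk]$, $[\Fsk-\Fok]$, and $[\Fok-f(\xkft)]$, the first and last terms go to zero after dividing by $\dm\norminf{d^k}\ge \dm\cdot(d_{\min}/\dpl)$ (Assumption~\ref{dmin}) because of Corollary~\ref{corHFok} and the relation $\dm=\min\{\dpl,(\dpl)^2\}$, which controls the ratio $(\dpl)^2/\dm$. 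For the middle term, on iterations in $\mathcal{L}$ that are \emph{not} $f$-Dominating with respect to $\xkft$, the algorithm's acceptance rule forces $\Fsk-\Fok > -\gamma\efPB(\dpl)^2$ (otherwise $x^k+s^k\domf\xkf$ and the iteration would have been $f$-Dominating, producing a new feasible incumbent and contradicting that $\xkft$ is the incumbent along the refining subsequence); dividing by $\dm\norminf{d^k}$ and using $(\dpl)^2/\dm\to 0$ shows this term has nonnegative liminf. Here I also need the observation, paralleling Theorem~\ref{resulth}, that along the $\Psi_k^{f,T}$-realizing subsequence the relevant iterations behave like unsuccessful (or at least non-$f$-dominating) ones with the incumbent fixed at $\xkft$; the $\liminf\Psi_k^{f,T}\le 0$ conclusion of Lemma~\ref{Psikf} is exactly what lets me select $\mathcal{L}$ so that $f(\xkft)-f(\xkft+\dm d^k)\le o(\dpl)$, i.e. the sign works out.

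Combining these pieces gives $\limsup_{k\in\mathcal{L}}\big(f(\xkft+\dm d^k)-f(\xkft)\big)/(\dm\norminf{d^k})\ge 0$. Since $\xkft\to\xhatf$, $\dm\norminf{d^k}\searrow 0$, $d^k/\norminf{d^k}\to v$, and all the perturbed poll points $\xkft+\dm d^k$ stay in $\X$ (hence, by the hypertangent property applied with $v\in T^H_{\D}(\xhatf)$, eventually in $\D$ — though in fact one only needs membership in $\X$ for the $\X$-relative Clarke derivative and then Theorem~\ref{xhatD} to place $\xhatf$ in $\D$), the definition~\eqref{ClarkeDef} of the Clarke generalized derivative immediately yields $f^{\circ}(\xhatf;v)\ge 0$. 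Finally I would translate this pointwise-in-$\omega$ statement into the probability-one statement~\eqref{pvquatre}, exactly as the passage from the pointwise argument to~\eqref{pvtrois} is done in Theorem~\ref{resulth}: the event described in~\eqref{pvquatre} contains the intersection of the finitely many probability-one events listed in the first step, hence has probability one.

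I expect the main obstacle to be the careful bookkeeping around the random time $T$ and the incumbent notation $\xkft=x^{k\vee t}_{\textnormal{feas}}$: one must be sure that for $k$ large along the refining subsequence the feasible incumbent is genuinely $\xkft$ and that the poll directions $d^k$ in play are drawn from $\mathbb{D}^k_p(\xkf)$, so that the non-$f$-dominating inequality $\Fsk-\Fok>-\gamma\efPB(\dpl)^2$ is actually the one the algorithm enforces. The measurability/conditioning subtleties (that $\mathcal{L}$ and $v$ are chosen in an $\omega$-dependent way) are handled, as in~\cite{AuDe09a,audet2019stomads}, by phrasing everything inside the fixed probability-one event and only at the end writing the set-builder form~\eqref{pvquatre}; the telescoping/variance machinery that would be the hard analytic core has already been absorbed into Theorem~\ref{zerothorderTheorem}, Lemma~\ref{Psikf}, and Corollary~\ref{corHFok}, so what remains here is the deterministic Clarke-calculus argument applied pathwise.
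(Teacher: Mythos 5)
Your overall architecture is the paper's: fix an outcome in an intersection of probability-one events, invoke Theorem~\ref{xhatD} to place $\xhatf$ in $\D$, and then rerun the deterministic Clarke-calculus argument of Theorem~\ref{resulth} with $h$ replaced by $f$ and $\Psi_k^h$ by $\Psi_k^{f,T}$. However, the step you single out as the ``core estimate'' has a genuine gap. You decompose $f(\xkft+\dm d^k)-f(\xkft)$ through the estimates as $[f(\xkft+\dm d^k)-\Fsk]+[\Fsk-\Fok]+[\Fok-f(\xkft)]$ and claim the two error terms vanish after division by $\dm\norminf{d^k}$ because of Corollary~\ref{corHFok}. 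That corollary only gives $\abs{\Fok-f(X^k)}\to 0$ almost surely, with no rate relative to the poll size; since for small $\dpl$ one has $\dm=(\dpl)^2$ and $d_{\min}\leq\dpl\norminf{d^k}\leq b$, the denominator satisfies $\dm\norminf{d^k}\in[\dpl d_{\min},\dpl b]$ and itself tends to zero, so the quotient $\abs{\Fok-f(\xkft)}/(\dm\norminf{d^k})$ need not vanish. (Your parenthetical ``$(\dpl)^2/\dm\to 0$'' is also false: that ratio equals $1$ once $\dpl<1$; what is true is $(\dpl)^2/(\dm\norminf{d^k})\leq\dpl/d_{\min}\to 0$.) A pointwise rate $\abs{\Fok-f}=O((\dpl)^2)$ is available only on iterations where the event $J_k$ occurs, and nothing forces your subsequence $\mathcal{L}$ to consist of such iterations; this is precisely why the accuracy bookkeeping is confined to the proof of Lemma~\ref{Psikf}, where one works by contradiction on $I_k\cap J_k$.

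The gap is avoidable, and your own write-up already half-contains the correct route: Lemma~\ref{Psikf} is a statement about the \emph{true} function $f$, so one selects $K$ with $\lim_{K}\Psi_k^{f,T}\leq 0$ and writes, exactly as in~\eqref{psikpositive},
\begin{equation*}
\lim_{k\in\mathcal{L}}\frac{f(\xkft+\dm d^k)-f(\xkft)}{\dm\norminf{d^k}}\;\geq\;-d_{\min}^{-1}\lim_{k\in\mathcal{L}}\Psi_k^{f,T}\;\geq\;0,
\end{equation*}
with no reference to $\Fok$ or $\Fsk$ at this stage. The remaining passage to $f^{\circ}(\xhatf;v)\geq 0$ is not ``immediate'' from~\eqref{ClarkeDef} either: because the poll directions only converge to $v$ rather than equal it, one needs the auxiliary points $y^k=\xkft+t_k(d^k/\norminf{d^k}-v)$, the sequences $a_k,b_k$, the local Lipschitz continuity of $f$ from Assumption~\ref{lipschitzAssumption}, and Lemma~\ref{akbk}, as in the proof of Theorem~\ref{resulth}. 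Dropping the three-term decomposition and keeping only the $\Psi_k^{f,T}$ argument turns your proposal into the paper's proof.
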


\begin{corollary}
	Let all assumptions that were made in Theorem~\ref{resultf} hold.
	Let $\{\Xkft\}_{k\in K}$ be the $\efPB$-feasible refining subsequence of Theorem~\ref{resultf}, with realizations $\{\xkft\}_{k\in \mathcal{K}}$ which converges to a refined point $\xhatf\in\D$. 
	If the set of refining directions for $\xhatf$ is dense in $T^H_{\D}(\xhatf)\neq\emptyset$, then $\xhatf$ is a Clarke stationary point for~\eqref{problemPB}.
\end{corollary}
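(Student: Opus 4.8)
The plan is to lift the directional inequality of Theorem~\ref{resultf}, which only concerns refining directions, to the whole Clarke tangent cone to $\D$ at $\xhatf$, exactly as is done in the corresponding deterministic argument of~\cite{AuDe09a}. Two classical ingredients from nonsmooth analysis are needed: first, the map $v\mapsto f^{\circ}(\xhatf;v)$ is (globally Lipschitz, hence) continuous on $\rn$, which follows from $f$ being locally Lipschitz near $\xhatf\in\D$ (Assumption~\ref{lipschitzAssumption}); second, whenever the hypertangent cone $T^H_{\D}(\xhatf)$ is nonempty, its closure equals the Clarke tangent cone $T^{Cl}_{\D}(\xhatf)$ (see~\cite{Clar83a,Rock80a,Jahn94a}).

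First I would fix a realization lying in the probability-one event of~\eqref{pvquatre}, so that the subsequence $\{\xkft\}_{k\in\mathcal{K}}$ converges to the refined point $\xhatf\in\D$ and the conclusion of Theorem~\ref{resultf} is in force: $f^{\circ}(\xhatf;v)\geq 0$ for every refining direction $v\in T^H_{\D}(\xhatf)$ of $\xhatf$. Next, pick an arbitrary $w\in T^H_{\D}(\xhatf)$. By the density hypothesis there is a sequence $\{v_i\}$ of refining directions for $\xhatf$ with $v_i\to w$; since $f^{\circ}(\xhatf;v_i)\geq 0$ for each $i$ and $v\mapsto f^{\circ}(\xhatf;v)$ is continuous, letting $i\to\infty$ gives $f^{\circ}(\xhatf;w)\geq 0$. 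Hence $f^{\circ}(\xhatf;\cdot)\geq 0$ on all of $T^H_{\D}(\xhatf)$.

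Finally I would pass to the closure. Given any $u\in T^{Cl}_{\D}(\xhatf)=\overline{T^H_{\D}(\xhatf)}$, choose $u_i\in T^H_{\D}(\xhatf)$ with $u_i\to u$; continuity of $f^{\circ}(\xhatf;\cdot)$ then yields $f^{\circ}(\xhatf;u)=\lim_i f^{\circ}(\xhatf;u_i)\geq 0$. Therefore $f^{\circ}(\xhatf;v)\geq 0$ for every $v$ in the Clarke tangent cone $T^{Cl}_{\D}(\xhatf)$, which is precisely the definition of $\xhatf$ being a Clarke stationary point of $\min_{x\in\D}f(x)$, i.e., of Problem~\eqref{problemPB}.

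There is no real obstacle here: the only points that deserve care are the continuity of the directional Clarke derivative and the density identity $\overline{T^H_{\D}(\xhatf)}=T^{Cl}_{\D}(\xhatf)$, both classical and used in exactly the same manner in~\cite{AuDe09a}. Indeed, once the stochastic content has been absorbed into Theorem~\ref{resultf}, this corollary is proved by an argument identical to the corresponding deterministic corollary in~\cite{AuDe09a}, so one could equally well simply cite it.
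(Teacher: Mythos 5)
Your proposal is correct and is essentially the same argument the paper relies on: the paper simply cites Corollary~3.4 of~\cite{AuDe09a}, and your write-up spells out exactly that deterministic density/continuity argument (nonnegativity of $f^{\circ}(\xhatf;\cdot)$ on the refining directions, extended by density to $T^H_{\D}(\xhatf)$ and then by continuity to its closure, the Clarke tangent cone) after fixing a realization in the probability-one event of Theorem~\ref{resultf}. No gaps.
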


\begin{proof}
	The proof of this result is almost identical to the proof of a similar result (Corollary 3.4) in~\cite{AuDe09a} and hence will not be presented here again. 
\end{proof}


\section{Computational study}\label{sec5}

This section {\bl illustrates} the performance and the efficiency of StoMADS-PB using noisy variants of $42$ continuous analytical computational constrained problems from the optimization literature. The sources and characteristics of these problems are summarized in Table~\ref{tabprob}. The number of variables ranges from $n=2$ to $n=20$, where every problem has at least one constraint ($m>0$) other than bound constraints. In order to show the capability of StoMADS-PB to cope with noisy constrained problems {\bl compared to} MADS with PB~\cite{AuDe09a} referred to as MADS-PB, the latter algorithm is compared to several variants of StoMADS-PB. 
For all numerical investigations of both algorithms, only the POLL step is used, i.e., no SEARCH step is involved. The OrthoMADS-$2n$ directions~\cite{AbAuDeLe09} are used for the POLL which is ordered by means of an opportunistic strategy~\cite{AuHa2017}. MADS-PB and all the proposed variants of StoMADS-PB are implemented in {\sf MATLAB}.

The stochastic variants of the $42$ abovementioned deterministic constrained optimization problems are solved using three different infeasible initial points for a total of $126$ problem instances. Inspired from~\cite{audet2019stomads}, such stochastic variants are constructed by additively perturbing the objective $f$ by a random variable $\Theta_0$ and each constraint $c_j, j=1,2,\dots,m$ by a random variable $\Theta_j$ as follows
\begin{equation}
f_{\Theta_0}(x)=f(x)+\Theta_0\quad\text{and}\quad c_{\Theta_j}(x)=c_j(x)+\Theta_j, \ \text{for all}\ j\in J,
\end{equation}
where $\Theta_0$ is uniformly generated in the interval $I(\sigma,x^0,f)=\left[-\sigma\abs{f(x^0)-f^*}, \sigma\abs{f(x^0)-f^*}\right]$ and $\Theta_j$ is uniformly generated in $I(\sigma,x^0,c_j)=\left[-\sigma\abs{c_j(x^0)}, \sigma\abs{c_j(x^0)}\right]$. The scalar $\sigma>0$ is used to define different noise levels, $x^0$ denotes an initial point and $f^*$ is the best known feasible minimum value of~$f$. The random variables $\Theta_0, \Theta_1,\dots,\Theta_m$ are independent. For the remainder of the study, the process which returns the vector $\left[f_{\Theta_0}(x), c_{\Theta_1}(x), c_{\Theta_2}(x),\dots,c_{\Theta_m}(x) \right]$ when provided the input $x$ will be referred to as noisy blackbox.

The MADS-PB algorithm~\cite{AuDe09a} of which StoMADS-PB is a stochastic variant and to which the latter is compared is an iterative direct-search method originally developed for deterministic constrained blackbox optimization. In MADS-PB, feasibility is sought by progressively decreasing in an adaptive manner a threshold imposed on a constraint violation function into which all the constraint violations are aggregated. Any trial point with a constraint violation value greater than that threshold  is rejected out of hand. 
Full description of MADS-PB iterations and useful information for better understanding of the algorithm behavior can also be found in~\cite{AuHa2017}. 

The relative performance and efficiency of algorithms are assessed by performance profiles~\cite{DoMo02,MoWi2009} and data profiles~\cite{MoWi2009}, which require to define for a given computational problem a convergence test. For each of the $126$ problems, denote by $x^N$ the best feasible iterate found after $N$ evaluations of the noisy blackbox and let $x^*$ be the best feasible point obtained by all tested algorithms on all run instances. Then, the convergence test from~\cite{AuLeDTr2018} used for the experiments is defined as follows:
\begin{equation}\label{convtest}
f(x^N)\leq f(x^*)+\tau(\bar{f}_{\textnormal feas}-f(x^*)), 
\end{equation}
where, $\tau\in [0,1]$ is the convergence tolerance and $\bar{f}_{\textnormal feas}$ is a reference value obtained by taking the average of the first feasible $f$ function values over all run instances of a given computational problem for all algorithms. If no feasible point is found, then the convergence test fails. Otherwise, a problem is said to be successfully solved within the tolerance $\tau$ if~\eqref{convtest} holds. As highlighted in~\cite{AuLeDTr2018}, $\bar{f}_{\textnormal feas}=f(x^0)$ for unconstrained computational problems, where $x^0$ denotes the initial point. 

The horizontal axis of the performance profiles shows the ratio of the number of noisy objective function evaluations while the fraction of computational problems solved within the convergence tolerance $\tau$ is shown on the vertical axis. On the horizontal axis of the data profiles is shown the number of function calls to the noisy blackbox divided by $(n+1)$\footnote{$n+1$ is the number of evaluations required to construct a linear interpolant or a {\bl simplex gradient}~\cite{AuHa2017} in $\rn$~\cite{AuLeDTr2018,MoWi2009}.} while the vertical axis shows the proportion of computational problems solved by all run instances of a given algorithm within a tolerance~$\tau$. As emphasized in~\cite{AuHa2017}, performance profiles capture information on speed of convergence (i.e., the quality of a given algorithm's output in terms of the objective function evaluations) and robustness (i.e., the fraction of computational problems solved) in a compact graphical format, while data profiles also examine the robustness and efficiency from a different perspective. 

Now recall that in StoMADS-PB, according to Section~\ref{computationPB}, the noisy blackbox needs to be evaluated many times at a given point in order to compute function estimates unlike the MADS-PB method where it is evaluated only once at each point. But since a limited budget of $1000(n+1)$ noisy blackbox evaluations is set in all the experiments, that is, since MADS-PB and all variants of StoMADS-PB stop as soon as the number of noisy blackbox evaluations reaches $1000(n+1)$, only few calls to the blackbox need to be used when computing StoMADS-PB function estimates. However, given that such estimates are required to be sufficiently accurate in order for the solutions to be satisfactory, a procedure inspired from~\cite{audet2019stomads} aiming at improving the estimates accuracy by making use of available samples at a given current point is proposed. Note in passing that the proposed computation procedure is very efficient in practice as highlighted in~\cite{audet2019stomads} even though it is {\bl inherently biased}. The following computation scheme is described only for $\fokx$ but is the same for $\fskx$, $\cokx$ and $\cskx$, for all $j\in J$. First, let mention that during the optimization, all trial points $x^k$ used by StoMADS-PB and all corresponding values $f_{\Theta_0}(x^k)$ are stored in a cache. When constructing an estimate of $f(x^k)$ at the iteration $k\geq 1$, denote by $a^k(x^k)$\footnote{It is implicitly assumed without any loss of generality that $a^k(x^k)\geq 1$.} the number of sample values of $f_{\Theta_0}(x^k)$ available in the cache from previous blackbox evaluations until iteration $k-1$. Since all the values of the noisy objective function $f_{\Theta_0}$ are always computed independently of each other, the aforementioned sample values can be considered as independent realizations $f_{\theta_{0,1}}(x^k), f_{\theta_{0,2}}(x^k),\dots,f_{\theta_{0,a^k(x^k)}}(x^k)$ of $f_{\Theta_0}(x^k)$, where for all $\ell=1,2,\dots,a^k(x^k)$, $\theta_{0,\ell}$ is a realization of the random variable $\Theta_{0,\ell}$ following the same distribution as $\Theta_0$. Now let $n^k\geq 1$ be the number of blackbox evaluations at $x^k$ and consider the following independent realizations $\theta_{0,a^k(x^k)+1}, \theta_{0,a^k(x^k)+2},\dots, \theta_{0,a^k(x^k)+n^k}$ of $\Theta_0$. Then, an estimate $\fokx$ of $f(x^k)$ is computed according to,
\begin{equation}\label{scheme}
\fokx=\frac{1}{p^k}\sum_{\ell=1}^{p^k}f_{\theta_{0,\ell}}(x^k),
\end{equation}
where $p^k=n^k+a^k(x^k)$ is the sample size.

Same values are used to initialize most of the common parameters to StoMADS-PB and MADS-PB. Specifically, the mesh refining parameter $\tau=1/2$, the frame center trigger $\rho=0.1$ and $\delta_m^0=\delta_p^0=1$. Nevertheless in MADS-PB, the initial barrier threshold is set equal its default value, i.e., $h^0_{\max}=+\infty$~\cite{AuDe09a} while in StoMADS-PB it equals $u^0_0(\xzeroi)$, with $\uokx$ defined in~\eqref{epspr} for all $k\in\N$. The default values of Algorithm~\ref{algoPB} parameters $\gamma>2$ and $\efPB>0$\footnote{The use of $\varepsilon_f$ instead of $\efPB$ is favored in~\cite{audet2019stomads}.} are borrowed from~\cite{audet2019stomads} in which StoMADS, an unconstrained stochastic variant of MADS~\cite{AuDe2006} is introduced. Specifically, $\gamma=17$ and $\efPB=0.01$.


\begin{table}[ht!]
	
	\caption{Description of the set of $42$ analytical problems.}
	\label{tabprob}
	\centering
	\begin{tabular}{llllllllllll}
		\hline
		No   & Name         & Source                              & $n$    & $m$   & Bnds & No   & Name         & Source                              & $n$    & $m$    & Bnds \\ \hline
		$1$  & ANGUN       & \cite{wang2018constrained} & $2$  & $1$ & Yes  & $22$ & MAD1         & \cite{LuVl00}    & $2$  & $1$  & No\\
		$2$  & BARNES       & \cite{RodRenWat98} & $2$  & $3$ & Yes  & $23$ & MAD2         & \cite{LuVl00}    & $2$  & $1$  & No\\
		$3$  & BERTSIMAS       & \cite{bertsimas2010nonconvex} & $2$  & $2$ & No  & $24$ & MAD6         & \cite{LuVl00}    & $7$  & $7$  & Yes\\
		$4$  & CHENWANG\_F2       & \cite{ChWa2010} & $8$  & $6$ & Yes  & $25$ & MEZMONTES         & \cite{MezCoe05}    & $2$  & $2$  & Yes\\
		$5$  & CHENWANG\_F3       & \cite{ChWa2010} & $10$  & $8$ & Yes  & $26$ & NEW-BRANIN         & \cite{wang2018constrained}    & $2$  & $1$  & Yes\\
		$6$  & CONSTR-BRANIN       & \cite{wang2018constrained} & $2$  & $1$ & Yes  & $27$ & OPTENG-BENCH4         & \cite{KiArYa2011}    & $2$  & $1$  & Yes	\\
		$7$  & CRESCENT       & \cite{AuDe09a} & $10$  & $2$ & No  & $28$ & OPTENG-BENCH5         & \cite{KiArYa2011}    & $2$  & $3$  & Yes\\
		$8$  & DEMBO5       & \cite{LuVl00} & $8$  & $3$ & Yes  & $29$ & OPTENG-RBF         & \cite{KiArYa2011}    & $3$  & $4$  & Yes\\
		$9$  & DISK       & \cite{AuDe09a} & $10$  & $1$ & No  & $30$ & PENTAGON         & \cite{LuVl00}   & $6$  & $15$  & No\\
		$10$  & G23       & \cite{AuDeLe07}    & $3$  & $2$ & Yes  & $31$ & PRESSURE-VESSEL         & \cite{MezCoe05}   & $4$  & $4$  & Yes\\
		$11$  & G210       & \cite{AuDeLe07}    & $10$  & $2$ & Yes  & $32$ & SASENA         & \cite{wang2018constrained}   & $2$  & $1$  & Yes\\
		$12$  & G220       & \cite{AuDeLe07}    & $20$  & $2$ & Yes  & $33$ & SNAKE        & \cite{AuDe09a}     & $2$  & $2$  & No\\
		$13$  & GOMEZ       & \cite{wang2018constrained}    & $2$  & $1$ & Yes  & $34$ & SPEED-REDUCER        & \cite{MezCoe05}     & $7$  & $11$  & Yes\\
		$14$  & HS15         & \cite{HoSc1981}    & $2$  & $2$ & Yes  & $35$ & SPRING       & \cite{RodRenWat98} & $3$  & $4$  & Yes	\\
		$15$  & HS19         & \cite{HoSc1981}    & $2$  & $2$ & Yes  & $36$ & TAOWANG\_F1  & \cite{TaoWan08} & $2$  & $2$  & Yes\\
		$16$  & HS22         & \cite{HoSc1981}    & $2$  & $2$ & No  & $37$ & TAOWANG\_F2  & \cite{TaoWan08} & $7$  & $4$  & Yes\\
		$17$  & HS23         & \cite{HoSc1981}    & $2$  & $5$ & Yes  & $38$ & WELDED-BEAM  & \cite{MezCoe05} & $4$  & $7$  & Yes	\\
		$18$  & HS29         & \cite{HoSc1981}    & $3$  & $1$ & No  & $39$ & WONG2        & \cite{LuVl00}      & $10$ & $3$  & No\\
		$19$  & HS43         & \cite{HoSc1981}    & $4$  & $3$ & No  & $40$ & ZHAOWANG\_F5 & \cite{ChWa2010b}   & $13$ & $9$  & Yes\\
		$20$  & HS108         & \cite{HoSc1981}    & $9$  & $13$ & Yes  & $41$ & ZILONG\_G4 & \cite{wang2018constrained}   & $5$ & $1$  & Yes\\
		$21$  & HS114         & \cite{HoSc1981}    & $10$  & $5$ & Yes  & $42$ & ZILONG\_G24 & \cite{wang2018constrained}   & $2$ & $1$  & Yes
		\\ \hline
	\end{tabular}
\end{table}

\begin{table}[ht!]
		\centering
	\caption{Percentage of problems solved for each noise level $\sigma$ within a convergence tolerance $\tau$.}
	\label{pbtab}
	\begin{tabular}{@{}lccclccc@{}}
		\toprule
		& \multicolumn{3}{c}{$\tau=10^{-1}$}                        &  & \multicolumn{3}{c}{$\tau=10^{-3}$}                                            \\ \midrule
		Algorithm          & $\ \sigma=0.01 $ & $ \sigma=0.03 $ & $ \sigma=0.05\ \ \ $ &  & $\ \sigma=0.01 $ & $ \sigma=0.03 $ & \multicolumn{1}{l}{$ \sigma=0.05\ $} \\ \midrule
		StoMADS-PB $n^k=1$ & $74.6\%$          & $78.57\%$         & $73.02\%$         &  & $44.44\%$         & $45.24\%$         & $45.24\%$                             \\
		StoMADS-PB $n^k=2$ & $74.6\%$          & $76.98\%$         & $76.19\%$         &  & $47.62\%$         & $47.62\%$         & $50.79\%$                             \\
		StoMADS-PB $n^k=3$ & $\ 76.19\%$         & 65.08\%           & $66.67\%$         &  & $48.41\%$         & $41.27\%$         & $38.10\%$                             \\
		MADS-PB            & $69.5\%$          & $64.29\%$         & $54.76\%$         &  & $41.27\%$         & $36.51\%$         & $29.37\%$                             \\ \bottomrule
	\end{tabular}
\end{table}

Three variants of StoMADS-PB corresponding to $n^k=1, n^k=2$ and $n^k=3$ are compared to MADS-PB. The data and performance profiles used for the comparisons are depicted on Figures~\ref{dataprofPB1pc},~\ref{dataprofPB3pc} and~\ref{dataprofPB5pc} and Figures~\ref{perfprofPB1pc},~\ref{perfprofPB3pc} and~\ref{perfprofPB5pc}. Three levels of noise are used during the experiments, which correspond to $\sigma=0.01$, $\sigma=0.03$ and $\sigma=0.05$. For a given algorithm, the estimated percentages of problems solved after $1000(n+1)$ noisy blackbox evaluations for each noise level within a convergence tolerance $\tau$ are reported in Table~\ref{pbtab}. They are obtained based on the profiles graphs using {\sf MATLAB} tools. 

The data and performance profiles show that when given the time, StoMADS-PB {\bl eventually} outperforms MADS-PB in general. 
Moreover as in~\cite{audet2019stomads}, varying the value of the convergence tolerance~$\tau$ in the data profiles does not significantly alter the conclusions drawn from the performance profiles. Indeed as expected, it can be easily observed from Table~\ref{pbtab} that the higher the tolerance parameter $\tau$, the larger the percentage of problems solved by all algorithms for a fixed noise level $\sigma$. Now notice that while for a given $\tau$, the fraction of problems solved by MADS-PB decreases when the noise level increases from $\sigma=0.01$ to $\sigma=0.05$, this seems not to be the case for StoMADS-PB variants. Before giving an insight as to why, recall that in the present constrained framework, the success or failure of the convergence test~\eqref{convtest} does not depend only on the values of the objective function $f$ but also on whether a feasible point is found or not, unlike the framework of~\cite{audet2019stomads} where no constraints are involved. In fact, as highlighted in~\cite{audet2019stomads} from which is inspired the computation scheme~\eqref{scheme}, even though the robustness and efficiency of each StoMADS-PB variants depends on the number $n^k$ of noisy blackbox evaluations which is constant for all $k$, the quality of the solutions is influenced by the sample size $p^k=n^k+a^k(x^k)$ which is not constant. On one hand, this is the reason why for $n^k=1$, StoMADS-PB does not have the same behavior as MADS-PB. On the other hand, such computation scheme naturally favors StoMADS-PB by improving the accuracy of the estimates of its constraints function values, thus allowing it to find more feasible solutions than MADS-PB and consequently possibly solve larger fraction of problems when the noise level increases for a fixed tolerance parameter $\tau$.

Finally, based on Table~\ref{pbtab}, it can be noticed that for a given convergence tolerance $\tau$, varying $\sigma$ seems not to have significant influences on the fractions of problems solved by StoMADS-PB variants corresponding to $n^k=1$ and $n^k=2$. Moreover, even though for the lowest noise level studied $\sigma=0.01$, StoMADS-PB with $n^k=3$ solved the most problems, the corresponding percentage is not significantly larger than that of  StoMADS-PB with $n^k=2$. For all these reasons, the latter variant seems preferable for constrained stochastic blackbox optimization problems.

\clearpage
\twocolumn 
\begin{figure}
	\centering
	\includegraphics[scale=0.6]{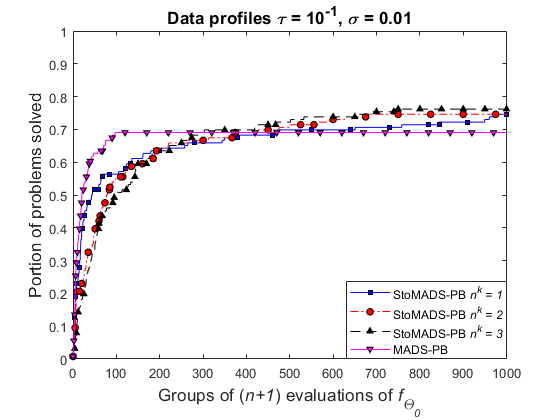}
	\includegraphics[scale=0.6]{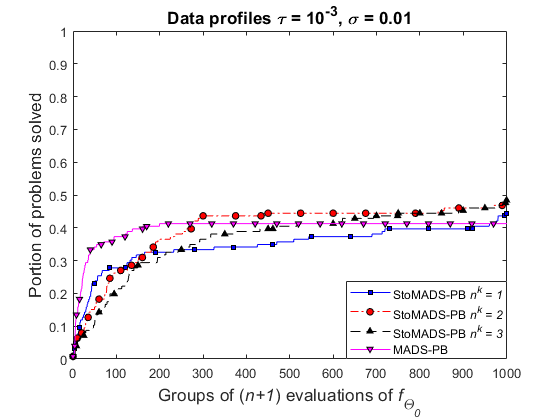}
	\caption{\small{Data profiles for convergence tolerances $\tau=10^{-1}$ and $\tau=10^{-3}$, and noise level $\sigma=0.01$ on $126$ analytical constrained test problems additively perturbed in the intervals $I(\sigma,x^0,f)$ and $I(\sigma,x^0,c_j)$.}}
	\label{dataprofPB1pc}
\end{figure}
\begin{figure}
	\centering
	\includegraphics[scale=0.6]{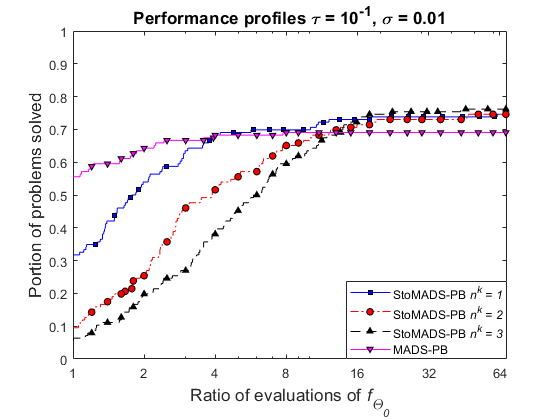} 
	\includegraphics[scale=0.6]{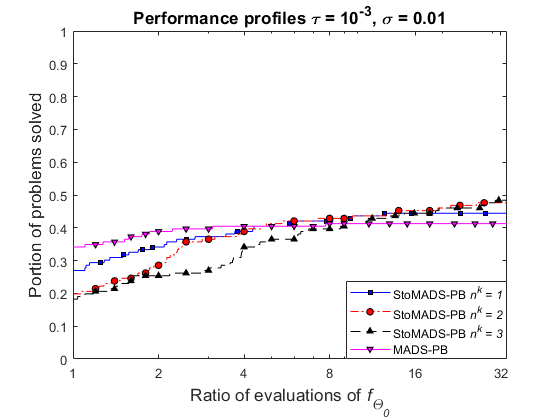}
	\caption{\small{Performance profiles for convergence tolerances $\tau=10^{-1}$ and $\tau=10^{-3}$, and noise level $\sigma=0.01$ on $126$ analytical constrained test problems additively perturbed in the intervals $I(\sigma,x^0,f)$ and $I(\sigma,x^0,c_j)$.}}
	\label{perfprofPB1pc}
\end{figure}

\clearpage

\begin{figure}
	\centering
	\includegraphics[scale=0.6]{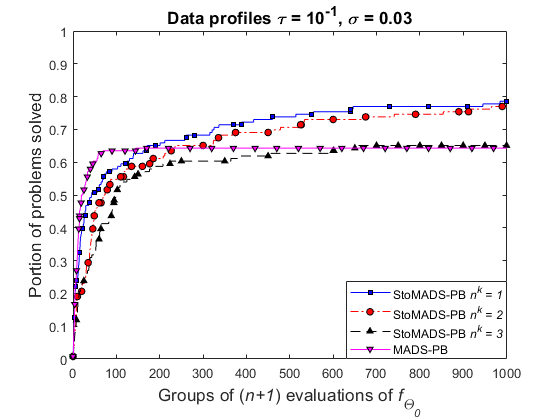}
	\includegraphics[scale=0.6]{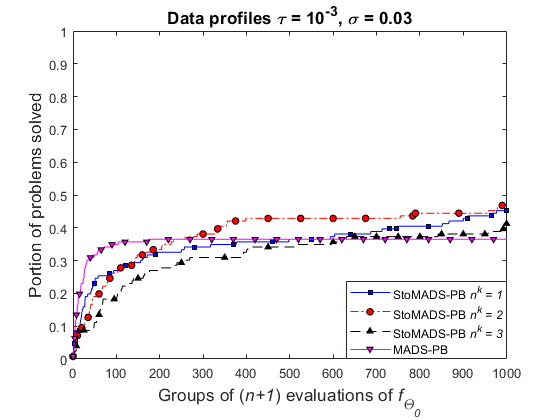}
	\caption[]{\small{Data profiles for convergence tolerances $\tau=10^{-1}$ and $\tau=10^{-3}$, and noise level $\sigma=0.03$ on $126$ analytical constrained test problems additively perturbed in the intervals $I(\sigma,x^0,f)$ and $I(\sigma,x^0,c_j)$.}}
	\label{dataprofPB3pc}
\end{figure}

\begin{figure}
	\centering
	\includegraphics[scale=0.6]{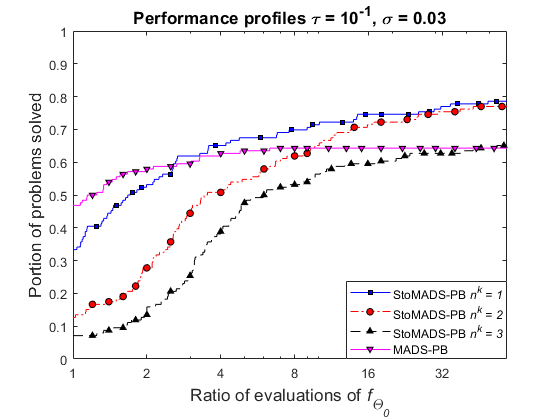}
	\includegraphics[scale=0.6]{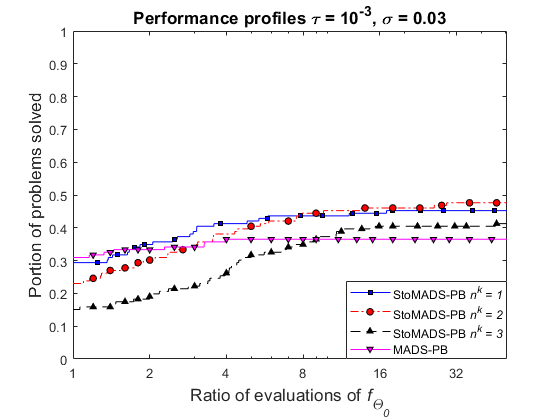}
	\caption[]{\small{Performance profiles for convergence tolerances $\tau=10^{-1}$ and $\tau=10^{-3}$, and noise level $\sigma=0.03$ on $126$ analytical constrained test problems additively perturbed in the intervals $I(\sigma,x^0,f)$ and $I(\sigma,x^0,c_j)$.}}
	\label{perfprofPB3pc} 
\end{figure}

\clearpage

\begin{figure}
	\centering
	\includegraphics[scale=0.6]{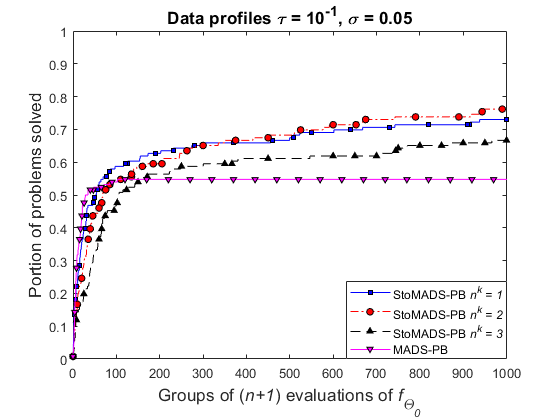}
	\includegraphics[scale=0.6]{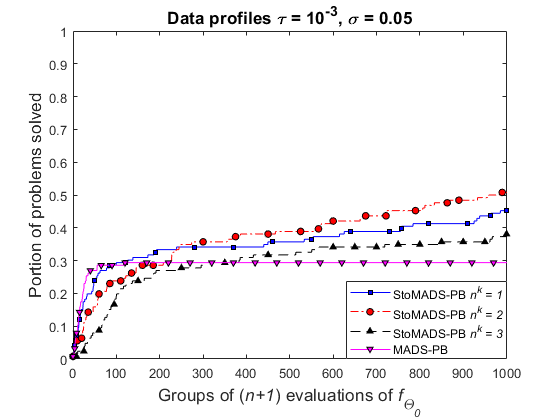}
	\caption[]{\small{Data profiles for convergence tolerances $\tau=10^{-1}$ and $\tau=10^{-3}$, and noise level $\sigma=0.05$ on $126$ analytical constrained test problems additively perturbed in the intervals $I(\sigma,x^0,f)$ and $I(\sigma,x^0,c_j)$.}}
	\label{dataprofPB5pc} 
\end{figure}

\begin{figure}
	\centering
	\includegraphics[scale=0.6]{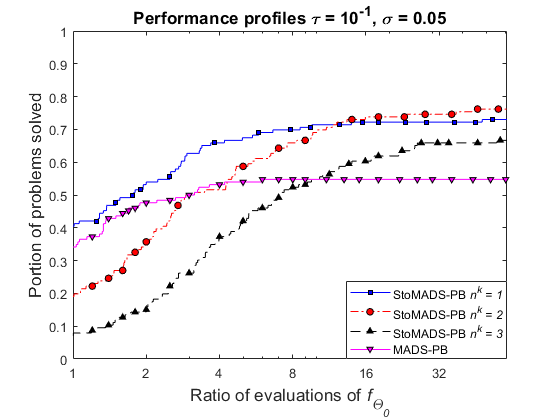}
	\includegraphics[scale=0.6]{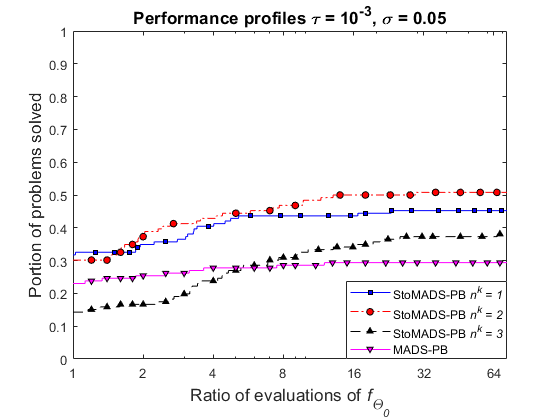}
	\caption[]{\small{Performance profiles for convergence tolerances $\tau=10^{-1}$ and $\tau=10^{-3}$, and noise level $\sigma=0.05$ on $126$ analytical constrained test problems additively perturbed in the intervals $I(\sigma,x^0,f)$ and $I(\sigma,x^0,c_j)$.}}
	\label{perfprofPB5pc} 
\end{figure}

\clearpage
\onecolumn

\section*{Concluding remarks}
This research proposes the StoMADS-PB algorithm for constrained stochastic blackbox optimization. The proposed method which uses an algorithmic framework similar to that of MADS considers the optimization of objective and constraints functions whose values can only be accessed through a stochastically noisy blackbox. It treats constraints using a progressive barrier approach, by aggregating their violations into a single function. It does not use any model or gradient information to find descent directions or improve feasibility unlike prior works, but instead, uses function estimates and introduces probabilistic bounds on which sufficient decrease conditions are imposed. By requiring the accuracy of such estimates and bounds to hold with sufficiently high but fixed probabilities, convergence results of StoMADS-PB are derived, most of which are stochastic variants of those of MADS. 

Computational experiments conducted on several variants of StoMADS-PB on a collection of constrained stochastically noisy problems showed the proposed method to eventually outperform MADS, and also showed some of its variants to be almost robust to random noise despite the use of very inaccurate estimates.

This research is to the best of our knowledge the first to propose a stochastic directional direct-search algorithm for BBO, developed to cope with a noisy objective and constraints that are also stochastically noisy. 

future research could focus on improving the proposed method to handle large-scale machine learning problems, making use for example of parallel space decomposition.
\section*{Acknowledgments}
The authors are grateful to Charles Audet from Polytechnique Montr\'eal for valuable discussions and constructive suggestions. This work is supported by the NSERC CRD RDCPJ 490744-15 grant and by an Innov\'E\'E grant, both in collaboration with {\bl Hydro-Qu\'ebec and Rio Tinto, and by a FRQNT fellowship.}

\section*{Appendix}
Now we prove a sequence of convergence results of Section~\ref{convAnalysisPB}.
\subsubsection*{Proof of Theorem~\ref{zerothorderTheorem}}
\begin{proof}
	This theorem is proved using ideas from~\cite{audet2019stomads,blanchet2019convergence,chen2018stochastic,dzahini2020expected,LaBi2016,paquette2018stochastic}. 
	According to Assumptions~\ref{stoppingAssumption}, the proof considers two different parts: Part~1 assumes that $T=+\infty$ almost surely, i.e., no $\efPB$-feasible iterate is found by Algorithm~\ref{algoPB}, while Part~2 considers that $T<+\infty$ almost surely.
	Part~1 considers two separate cases: ``good bounds'' and ``bad bounds'', each of which is broken into whether an iteration is $h$-Dominating, Improving or Unsuccessful. Part~2 considers three separates cases: ``good estimates and good bounds'', ``bad estimates and good bounds'' and ``bad bounds'', each of which is broken into whether an iteration is $f$-Dominating, $h$-Dominating, Improving or Unsuccessful.
	
	In order to show~\eqref{deltaSeries}, the goal of Part~1 is to show that there exists a constant $\eta>0$ such that conditioned on the almost sure event $\{T=+\infty\}$, the following holds for all $k\in\N$
	\begin{equation}\label{phiPart1}
	\E{\Phi_{k+1}-\Phi_k|\fcf}\leq -\eta(\Dp)^2, 
	\end{equation}
	where $\Phi_k$ is the random function defined by
	\begin{equation}\label{phik}
	\Phi_k:=\frac{\nu}{m\efPB}h(\Xki)+(1-\nu)(\Dp)^2, \quad\text{for all}\ k\in\N.
	\end{equation}
	Indeed, assume that~\eqref{phiPart1} holds. Since $\Phi_k>0$ for all $k\in\N$, then summing~\eqref{phiPart1} over $k\in\N$ and taking expectations on both sides lead to
	\begin{equation}\label{deltaPart1}
	\Esp\left[\sum_{k=0}^{+\infty}(\Dp)^2 \right]\leq\frac{\E{\Phi_0}}{\eta} =\frac{\Phi_0}{\eta},  
	\end{equation}
	That is, \eqref{deltaSeries} holds. Then, making use of the following random function 
	\begin{equation}\label{phikT}
	\Phi_k^T:=\frac{\nu}{\efPB}(f(\Xkft)-\kappa^f_{\min})+\frac{\nu}{m\efPB}h(\Xki)+(1-\nu)(\Dp)^2, \quad\text{for all}\ k\in\N,
	\end{equation}
	where $k\vee T:=\max\{k,T\}$, Part~2 aims to show that for the same previous constant $\eta>0$, then conditioned on the almost sure event $\{T<+\infty\}$, the following holds for all $k\in\N$
	\begin{equation}\label{phiPart2}
	\E{\Phi_{k+1}^T-\Phi_k^T|\fcf}\leq -\eta(\Dp)^2.
	\end{equation}
	Indeed, assume that~\eqref{phiPart2} holds. Since $\Phi_k^T>0$ for all $k\geq 0$, then summing~\eqref{phiPart2} over $k\in\N$ and taking expectations on both sides, yield 
	\begin{equation}\label{deltaPartT}
	\begin{split}
	\Esp\left[\sum_{k=0}^{+\infty}(\Dp)^2 \right]&\leq\frac{\E{\Phi_0^T}}{\eta}=\frac{1}{\eta}\left[\frac{\nu}{\efPB}\left(\Esp\left[f(X^T_{\textnormal{feas}})\right]-\kappa^f_{\min}\right)+\frac{\nu}{m\efPB}h(\xzeroi)+(1-\nu)(\delta^0_p)^2 \right] \\
	&\leq \frac{1}{\eta}\left[\frac{\nu}{\efPB}\left(\kappa^f_{\max}-\kappa^f_{\min}\right)+\frac{\nu}{m\efPB}h(\xzeroi)+(1-\nu)(\delta^0_p)^2 \right]=:\mu,
	\end{split}
	\end{equation}
	where the last inequality in~\eqref{deltaPartT} follows from the inequality $f(\Xkf)\leq \kappa^f_{\max}$ for all $k\geq 0$, due to Proposition~\ref{kappafmax}, and the fact that $T$ is finite almost surely.

	The remainder of the proof is devoted to showing that~\eqref{phiPart1} and \eqref{phiPart2} hold. The following events are introduced for the sake of clarity in the analysis. \\
	$\mathcal{D}_f:=\ $\{The iteration is $f$-Dominating\},\ \ \quad $\mathcal{D}_h:=\ $\{The iteration is $h$-Dominating\},\\
	$\mathcal{I}\ \ \, :=\{\text{The iteration is Improving}\}$,\quad\quad\quad $\ \, \mathcal{U}:=\{\text{The iteration is Unsuccessful}\}$.\\
	$ $\\
	\textbf{Part~1 ($\boldsymbol{T=+\infty}$ almost surely)}. 
	The random function $\Phi_k$ defined in~\eqref{phik} will be shown to satisfy~\eqref{phiPart1} with $\eta=\frac{1}{2}\alpha\beta(1-\nu)(1-\tau^2)$, no matter the change led in the objective function $f$ by the $\efPB$-infeasible iterates encountered by Algorithm~\ref{algoPB}. Moreover, since $T$ is infinite almost surely, then no iteration of Algorithm~\ref{algoPB} can be  $f$-Dominating. Two separate cases are distinguished and all that follows is conditioned on the almost sure event $\{T=+\infty\}$.\\
	\textbf{Case~1 (Good bounds, $\boldsymbol{\iik=1}$)}. No matter the type of iteration which occurs, the random function $\Phi_k$ is shown to decrease and the smallest decrease is shown to happen on unsuccessful iterations, thus yielding the following conclusion
	\begin{equation}\label{onze}
	\Esp\left[\iik(\Phi_{k+1}-\Phi_k)|\fcf\right] \leq -\alpha(1-\nu)(1-\tau^2)(\Dp)^2.
	\end{equation}
	\begin{itemize}
		\item[(i)] { The iteration is} $h$-Dominating ($\idh=1$). The iteration is $h$-Dominating and the bounds are good, so a decrease occurs in $h$ according to~\eqref{five} as follows 
		\begin{equation}\label{un}
		\iik\idh\frac{\nu}{m\efPB} (h(\Xkiun)-h(\Xki))\leq -\iik\idh\nu(\gamma-2)(\Dp)^2
		\end{equation}
		The frame size parameter is updated according to $\Delta^{k+1}_p=\min\{\tau^{-1}\Dp,\delta_{\max}\}$, which implies that
		\begin{equation}\label{unprime}
		\iik\idh(1-\nu)[(\Delta^{k+1}_p)^2-(\Dp)^2]\leq \iik\idh(1-\nu)(\tau^{-2}-1)(\Dp)^2.
		\end{equation}
		Then, by choosing $\nu$ according to~\eqref{nuChoice}, the right-hand side term of~\eqref{un} dominates that of~\eqref{unprime}. Specifically, the following holds
		\begin{equation}\label{deux}
		-\nu(\gamma-2)(\Dp)^2 + (1-\nu)(\tau^{-2}-1)(\Dp)^2 \leq -\frac{1}{2}\nu(\gamma-2)(\Dp)^2.
		\end{equation}
		Then combining~\eqref{un}, \eqref{unprime} and~\eqref{deux} leads to 
		\begin{equation}\label{quatre}
		\iik\idh(\Phi_{k+1}-\Phi_k)\leq-\iik\idh\frac{1}{2}\nu(\gamma-2)(\Dp)^2.
		\end{equation}
		\item[(ii)] { The iteration is} Improving ($\iimp=1$). The iteration is Improving and the bounds are good, so again, a decrease occurs in $h$ according to~\eqref{five}. Moreover, $\Dp$ is updated as at $h$-Dominating iterations. Thus, the change in $\Phi_k$ follows from~\eqref{quatre} by replacing $\idh$ by $\iimp$. Specifically, 
		\begin{equation}\label{cinq}
		\iik\iimp(\Phi_{k+1}-\Phi_k)\leq-\iik\iimp\frac{1}{2}\nu(\gamma-2)(\Dp)^2.
		\end{equation}
		\item[(iii)] { The iteration is} Unsuccessful ($\iuns=1$). There is a change of zero in $h$ function values while the frame size parameter is decreased. Consequently,
		\begin{equation}\label{six}
		\iik\iuns(\Phi_{k+1}-\Phi_k)=-\iik\iuns(1-\nu)(1-\tau^{2})(\Dp)^2
		\end{equation}
		Then, the choice of $\nu$ according to~\eqref{nuChoice} and the fact that $1-\tau^2<\tau^{-2}-1$ ensures that unsuccessful iterations, more precisely~\eqref{six}, provide the worst case decrease when compared to~\eqref{quatre} and~\eqref{cinq}. Specifically, the following holds
		\begin{equation}\label{sept}
		-\frac{1}{2}\nu(\gamma-2)(\Dp)^2\leq-(1-\nu)(1-\tau^{2})(\Dp)^2.
		\end{equation}
		Thus, it follows from~\eqref{quatre},~\eqref{cinq},~\eqref{six} and~\eqref{sept} that the change in $\Phi_k$ is bounded as follows
		\begin{equation}\label{dix}
		\iik(\Phi_{k+1}-\Phi_k)=\iik(\idh+\iimp+\iuns)(\Phi_{k+1}-\Phi_k)\leq -\iik(1-\nu)(1-\tau^{2})(\Dp)^2.
		\end{equation}
	\end{itemize}
	Since Assumption~\ref{randomEstim} holds, then taking conditional expectations with respect to $\fcf$ on both sides of the inequality in~\eqref{dix} leads to~\eqref{onze}. \\
	\textbf{Case~2 (Bad bounds, $\boldsymbol{\iikc=1}$)}. Since the bounds are bad, Algorithm~\ref{algoPB} can accept an iterate which leads to an increase in $h$ and $\Dp$, and hence in $\Phi_k$. Such an increase in $\Phi_k$ is controlled making use of~\eqref{Hcond1}. Then, the probability of outcome (Part~1, Case~2) is adjusted to be sufficiently small so that $\Phi_k$ can be reduced sufficiently in expectation. More precisely, the following will be proved
	\begin{equation}\label{seize}
	\Esp\left[\iikc(\Phi_{k+1}-\Phi_k)|\fcf\right] \leq 2\nu(1-\alpha)^{1/2}(\Dp)^2.
	\end{equation} 
	\begin{itemize}
		\item[(i)] { The iteration is} $h$-Dominating ($\idh=1$). The change in $h$ is bounded as follows
		\begin{eqnarray}
		\iikc\idh\frac{\nu}{m\efPB} (& &\hspace*{-1cm}h(\Xkiun)-h(\Xki)) \nonumber \\
		&\leq&\iikc\idh\frac{\nu}{m\efPB} \left[(\Hsk-\Hok)+\abs{h(\Xkiun)-\Hsk}+\abs{h(\Xki)-\Hok}\right]  \nonumber\\
		&\leq&\iikc\idh\nu\left[-\gamma(\Dp)^2+\frac{1}{m\efPB}\left(\abs{h(\Xkiun)-\Hsk}+\abs{h(\Xki)-\Hok}\right)\right],\label{douze}
		\end{eqnarray}
		where~\eqref{douze} follows from $\Hsk-\Hok\leq-\gamma m\efPB(\Dp)^2 $ which is satisfied for every $h$-Dominating iteration. Moreover, the change in $\Dp$ can be obtained simply by replacing in~\eqref{unprime} $\iik$ by $\iikc$ as follows
		\begin{equation}\label{unsecond}
		\iikc\idh(1-\nu)[(\Delta^{k+1}_p)^2-(\Dp)^2]\leq \iikc\idh(1-\nu)(\tau^{-2}-1)(\Dp)^2.
		\end{equation}
		Since choosing $\nu$ according to~\eqref{nuChoice} ensures that $-\nu\gamma(\Dp)^2+(1-\nu)(\tau^{-2}-1)(\Dp)^2\leq 0$,
		then combining~\eqref{douze} and \eqref{unsecond}, yields
		\begin{equation}\label{treize}
		\iikc\idh(\Phi_{k+1}-\Phi_k)\leq\iikc\idh\frac{\nu}{m\efPB}\left(\abs{h(\Xkiun)-\Hsk}+\abs{h(\Xki)-\Hok}\right).
		\end{equation}
		\item[(ii)] { The iteration is} Improving ($\iimp=1$). $\Dp$ is updated as at $h$-Dominating iterations and because of bad bounds, the increase in $h$ is bounded following~\eqref{douze}. Thus, the bound on the change in $\Phi_k$ can be obtained by replacing $\idh$ by $\iimp$ in~\eqref{treize} as follows
		\begin{equation}\label{quatorze}
		\iikc\iimp(\Phi_{k+1}-\Phi_k)\leq\iikc\iimp\frac{\nu}{m\efPB}\left(\abs{h(\Xkiun)-\Hsk}+\abs{h(\Xki)-\Hok}\right).
		\end{equation}
		\item[(iii)] { The iteration is} Unsuccessful ($\iuns=1$). The change in $h$ is zero and $\Dp$ is decreased. Thus, the change in $\Phi_k$ follows from~\eqref{six} by replacing $\iik$ by $\iikc$ and is trivially bounded as follows
		\begin{equation}\label{sixprime}
		\iikc\iuns(\Phi_{k+1}-\Phi_k)
		\leq\iikc\iuns\frac{\nu}{m\efPB}\left(\abs{h(\Xkiun)-\Hsk}+\abs{h(\Xki)-\Hok}\right).
		\end{equation}
		Finally, it follows from~\eqref{treize}, \eqref{quatorze}, \eqref{sixprime} and the inequality $\iikc\leq 1$,  that
		\begin{equation}
		\iikc(\Phi_{k+1}-\Phi_k)
		\leq \frac{\nu}{m\efPB}\left(\abs{h(\Xkiun)-\Hsk}+\abs{h(\Xki)-\Hok}\right),\label{quinze}
		\end{equation}
		Then, taking conditional expectations with respect to $\fcf$ on both sides of~\eqref{quinze} and using the inequalities~\eqref{Hcond1} of Assumption~\ref{randomEstim}, lead to~\eqref{seize}.
	\end{itemize}
	Now, combining~\eqref{onze} and~\eqref{seize} yields,
	\begin{eqnarray}
	\E{\Phi_{k+1}-\Phi_k|\fcf}&=&\Esp\left[(\iik+\iikc)(\Phi_{k+1}-\Phi_k)|\fcf\right]\nonumber \\ 
	&\leq& \left[-\alpha(1-\nu)(1-\tau^2)+ 2\nu(1-\alpha)^{1/2}\right](\Dp)^2.\label{seizeprime}
	\end{eqnarray}
	Then, choosing $\alpha$ according to~\eqref{betaChoice} implies that $\ds{\alpha\geq\frac{4\nu (1-\alpha)^{1/2}}{(1-\nu)(1-\tau^2)}}$, which ensures 
	\begin{equation}\label{seizesecond}
	-\alpha(1-\nu)(1-\tau^2)+ 2\nu(1-\alpha)^{1/2}\leq-\frac{1}{2}\alpha(1-\nu)(1-\tau^2)\leq -\frac{1}{2}\alpha\beta(1-\nu)(1-\tau^2).
	\end{equation}
	Thus, \eqref{phiPart1} follows from~\eqref{seizeprime} and~\eqref{seizesecond} with $\eta=\frac{1}{2}\alpha\beta(1-\nu)(1-\tau^2)$.\\
	$ $\\
	\textbf{Part~2 ($\boldsymbol{T<+\infty}$ almost surely)}. In order to show that the random function $\Phi_k^T$ defined by
	\begin{equation}
	\Phi_k^T=\frac{\nu}{\efPB}(f(\Xkft)-\kappa^f_{\min})+\frac{\nu}{m\efPB}h(\Xki)+(1-\nu)(\Dp)^2 \nonumber
	\end{equation} 
	satisfies~\eqref{phiPart2} with the same constant $\eta$ derived in Part~1, notice that whenever the event $\{T>k\}$ occurs, then $f(\Xkfunt)-f(\Xkft)=0$ since $\max\{k,T\}:=k\vee T=(k+1)\vee T=T$. Thus, on the event $\{T>k\}$, the random function $\Phi_k$ used in Part~1 has the same increments as~$\Phi_k^T$. Specifically, 
	\begin{equation}\label{equalIncrements}
	\itFini\itksup(\Phi_{k+1}^T-\Phi_k^T)=\itFini\itksup(\Phi_{k+1}-\Phi_k).\nonumber
	\end{equation}
	Moreover, it follows from the definition of the stopping time $T$ that no iteration can be $f$-Dominating as in Part~1 when the event $\{T>k\}$ occurs. Consequently, it easily follows from the analysis in Part~1 and the fact that the random variable $\itksup$ is $\fcf$-measurable that,
	\begin{equation}\label{phiPart1Tsupk}
	\itksup\E{\Phi_{k+1}^T-\Phi_k^T|\fcf}\leq -\eta(\Dp)^2\itksup.
	\end{equation}
	The remainder of the proof is devoted to showing that the following holds
	\begin{equation}\label{phiPart1Tlessk}
	\itkless\E{\Phi_{k+1}^T-\Phi_k^T|\fcf}\leq -\eta(\Dp)^2\itkless,
	\end{equation}
	since combining~\eqref{phiPart1Tsupk} and~\eqref{phiPart1Tlessk} leads to~\eqref{phiPart2}, which is the remaining overall goal. In all that follows, it is assumed that the event $\{T\leq k\}$ occurs.\\
	\textbf{Case~1 (Good estimates and good bounds, $\boldsymbol{\iik\ijk=1}$)}. Regardless of the iteration type, the smallest decrease in $\Phi_k^T$ is shown to happen on unsuccessful iterations, thus implying that
	\begin{equation}\label{vingtsix}
	\itkless\Esp\left[\iik\ijk(\Phi_{k+1}^T-\Phi_k^T)|\fcf\right] \leq -\alpha\beta(1-\nu)(1-\tau^2)(\Dp)^2\itkless.
	\end{equation}
	\begin{itemize}
		\item[(i)] { The iteration is} $f$-Dominating ($\idf=1$). The iteration is $f$-Dominating and the estimates are good, so a decrease occurs in $f$ according to~\eqref{successf} as follows 
		\begin{eqnarray}
		\itkless\iik\ijk\idf\frac{\nu}{\efPB}(f(& &\hspace*{-1cm}\Xkfunt)-f(\Xkft))\nonumber\\
		&\leq&-\itkless\iik\ijk\idf\nu(\gamma-2)(\Dp)^2.\label{vingtc}
		\end{eqnarray}
		Since the $\efPB$-infeasible iterate is not updated, then there is a change of zero in $h$. The frame size parameter is updated according to $\Delta^{k+1}_p=\min\{\tau^{-1}\Dp,\delta_{\max}\}$, thus implying that
		\begin{equation}\label{vingtb}
		\itkless\iik\ijk\idf(1-\nu)[(\Delta^{k+1}_p)^2-(\Dp)^2]\leq \itkless\iik\ijk\idf(1-\nu)(\tau^{-2}-1)(\Dp)^2.
		\end{equation}
		Then, choosing $\nu$ according to~\eqref{nuChoice} ensures that~\eqref{deux} holds, which implies that the right-hand side term of~\eqref{vingtc} dominates that of~\eqref{vingtb}, thus leading to the inequality below
		\begin{equation}\label{vingtun}
		\itkless\iik\ijk\idf(\Phi_{k+1}^T-\Phi_k^T)\leq-\itkless\iik\ijk\idf\frac{1}{2} \nu(\gamma-2)(\Dp)^2.
		\end{equation}
		\item[(ii)] { The iteration is} $h$-Dominating ($\idh=1$). There is a change of zero in $f$ since $\Xkf$ is not updated. Thus, the bound on the change in $\Phi_k^T$ follows from multiplying both sides of~\eqref{quatre} by $\itkless\ijk$, and replacing $\Phi_k$ by $\Phi_k^T$ as follows
		\begin{equation}\label{vingtdeux}
		\itkless\iik\ijk\idh(\Phi_{k+1}^T-\Phi_k^T)\leq-\itkless\iik\ijk\idh\frac{1}{2}\nu(\gamma-2)(\Dp)^2.
		\end{equation}
		\item[(iii)] { The iteration is} Improving ($\iimp=1$). Again, there is a change of zero in $f$. Thus, the bound on the change in $\Phi_k^T$ easily follows from multiplying both sides of~\eqref{cinq} by $\itkless\ijk$, and replacing $\Phi_k$ by $\Phi_k^T$ as follows
		\begin{equation}\label{vingttrois}
		\itkless\iik\ijk\iimp(\Phi_{k+1}^T-\Phi_k^T)\leq-\itkless\iik\ijk\iimp\frac{1}{2}\nu(\gamma-2)(\Dp)^2. 
		\end{equation}
		\item[(iv)] { The iteration is} Unsuccessful ($\iuns=1$). There is a change of zero in $f$ and in $h$ since no iterate is updated, while $\Dp$ is decreased. Consequently, the bound on the change in $\Phi_k^T$ follows from multiplying both sides of~\eqref{six} by $\itkless\ijk$, and replacing $\Phi_k$ by $\Phi_k^T$ as follows
		\begin{equation}\label{vingtquatre}
		\itkless\iik\ijk\iuns(\Phi_{k+1}^T-\Phi_k^T)=-\itkless\iik\ijk\iuns(1-\nu)(1-\tau^{2})(\Dp)^2.
		\end{equation} 
		Then combining~\eqref{vingtun}, \eqref{vingtdeux}, \eqref{vingttrois}, \eqref{vingtquatre} and using~\eqref{sept}, yields
		\begin{equation}\label{vingcinq}
		\itkless\iik\ijk(\Phi_{k+1}^T-\Phi_k^T)
		\leq-\itkless\iik\ijk(1-\nu)(1-\tau^2)(\Dp)^2.
		\end{equation}
	\end{itemize}
	Now, notice that under Assumption~\ref{randomEstim}, simple calculations lead to $\E{\iik\ijk|\fcf}\geq\alpha\beta$. Then, taking expectations with respect to $\fcf$ on both sides of~\eqref{vingcinq} and using the $\fcf$-measurability of the random variables $\itkless$ and $\Dp$, lead to~\eqref{vingtsix}.\\
	\textbf{Case~2 (Bad estimates and good bounds, $\boldsymbol{\iik\ijkc=1}$)}.
	An increase in the difference of~$\Phi_k^T$ may occurs since good bounds might not provide enough decrease to cancel the increase which occurs in $f$ whenever Algorithm~\ref{algoPB} wrongly accepts an iterate because of bad estimates. Specifically, the $f$-Dominating case dominates the worst-case increase in the change of $\Phi_k^T$, thus leading to
	\begin{equation}\label{trenteun}
	\itkless\Esp\left[\iik\ijkc(\Phi_{k+1}^T-\Phi_k^T)|\fcf\right] \leq 2\nu(1-\beta)^{1/2}(\Dp)^2\itkless.
	\end{equation}
	\begin{itemize}
		\item[(i)] { The iteration is} $f$-Dominating ($\idf=1$). Whenever bad estimates occur and the iteration is $f$-Dominating, the change in $f$ is bounded as follows
		\begin{equation}\label{vingtsept}
		\begin{split}
		\mathds{1}_{\{T\leq k\}}& \iik\ijkc\idf\frac{\nu}{\efPB}(f(\Xkfunt)-f(\Xkft))\\
		&\leq\itkless\iik\ijkc\idf\frac{\nu}{\efPB} \left[(\Fsk-\Fok)+\abs{f(\Xkfun)-\Fsk}+\abs{f(\Xkf)-\Fok}\right]  \\
		&\leq\itkless\iik\ijkc\idf\nu\left[-\gamma(\Dp)^2+\frac{1}{\efPB}\left(\abs{f(\Xkfun)-\Fsk}+\abs{f(\Xkf)-\Fok}\right)\right]
		\end{split}
		\end{equation}
		where the last inequality in~\eqref{vingtsept} follows from  $\Fsk-\Fok\leq-\gamma\efPB(\Dp)^2 $ which is satisfied for every $f$-Dominating iteration. While the change in $h$ is zero since $\Xki$ is not updated, that in $\Dp$ follows~\eqref{vingtb} by replacing~$\ijk$ by $\ijkc$ as follows
		\begin{equation}\label{vingtseptPrime}
		\itkless\iik\ijkc\idf(1-\nu)[(\Delta^{k+1}_p)^2-(\Dp)^2]\leq \itkless\iik\ijkc\idf(1-\nu)(\tau^{-2}-1)(\Dp)^2.
		\end{equation}
		Then, \eqref{vingtsept}, \eqref{vingtseptPrime} and the inequality $-\nu\gamma(\Dp)^2+(1-\nu)(\tau^{-2}-1)(\Dp)^2\leq 0$ due to~\eqref{nuChoice} yield
		\begin{equation}\label{vingthuit}
		\begin{split}
		\itkless\iik\ijkc\idf(& \Phi_{k+1}^T-\Phi_k^T)\\
		&\leq\itkless\iik\ijkc\idf\frac{\nu}{\efPB}\left(\abs{f(\Xkfun)-\Fsk}+\abs{f(\Xkf)-\Fok}\right).
		\end{split}
		\end{equation}
		\item[(ii)] { The iteration is} $h$-Dominating ($\idh=1$). The bound on the change in $\Phi_k^T$ which can be obtained by replacing~$\ijk$ by $\ijkc$ in~\eqref{vingtdeux} is trivially bounded as follows
		\begin{eqnarray}\label{vingtneuf}
		\itkless\iik\ijkc\idh& &\hspace*{-1cm}(\Phi_{k+1}^T-\Phi_k^T)\nonumber\\
		&\leq&\itkless\iik\ijkc\idh\frac{\nu}{\efPB}\left(\abs{f(\Xkfun)-\Fsk}+\abs{f(\Xkf)-\Fok}\right).
		\end{eqnarray}
		\item[(iii)] { The iteration is} Improving ($\iimp=1$). Again, the change in $\Phi_k^T$ which can be obtained by replacing~$\ijk$ by $\ijkc$ in~\eqref{vingttrois} is trivially bounded as follows
		\begin{eqnarray}\label{trente}
		\itkless\iik\ijkc\iimp& &\hspace*{-1cm}(\Phi_{k+1}^T-\Phi_k^T)\nonumber\\
		&\leq&\itkless\iik\ijkc\iimp\frac{\nu}{\efPB}\left(\abs{f(\Xkfun)-\Fsk}+\abs{f(\Xkf)-\Fok}\right).
		\end{eqnarray}
		\item[(iv)] { The iteration is} Unsuccessful ($\iuns=1$). Because of the decrease of the frame size parameter and hence that in $\Phi_k^T$, the bound on the change in $\Phi_k^T$ is obviously as follows
		\begin{equation}\label{trenteprime}
		\begin{split}
		\itkless\iik\ijkc\iuns(& \Phi_{k+1}^T-\Phi_k^T)\\
		&\leq\itkless\iik\ijkc\iuns\frac{\nu}{\efPB}\left(\abs{f(\Xkfun)-\Fsk}+\abs{f(\Xkf)-\Fok}\right).
		\end{split}
		\end{equation}
		Then, combining~\eqref{vingthuit}, \eqref{vingtneuf}, \eqref{trente} and $\iik\ijkc\leq 1$, yields
		\begin{eqnarray}
		\itkless\iik\ijkc(& &\hspace*{-1cm}\Phi_{k+1}^T-\Phi_k^T)\nonumber \\
		&\leq&\itkless\frac{\nu}{\efPB}\left(\abs{f(\Xkfun)-\Fsk}+\abs{f(\Xkf)-\Fok}\right).\label{trentesecond}
		\end{eqnarray}
		Since Assumption~\ref{randomEstim} holds, it follows from the conditional Cauchy-Schwarz inequality~\cite{bhattacharya2007basic} that
		\begin{eqnarray}
		\E{\abs{f(\Xkf)-\Fok}|\fcf}&\leq&\E{1|\fcf}^{1/2} \left[\E{\abs{f(\Xkf)-\Fok}^2|\fcf}\right]^{1/2}\nonumber\\
		&\leq&\efPB(1-\beta)^{1/2}(\Dp)^2,\label{condFok}
		\end{eqnarray}
		where~\eqref{condFok} follows from~\eqref{varcond1PB} and the fact that $\E{1|\fcf}=1$. Similarly, the following holds
		\begin{equation}
		\E{\abs{f(\Xkfun)-\Fsk}|\fcf}\leq\efPB(1-\beta)^{1/2}(\Dp)^2.\label{condFsk}
		\end{equation}
	\end{itemize}
	Thus, taking expectations with respect to $\fcf$ on both sides of~\eqref{trentesecond} and then using~\eqref{condFok},~\eqref{condFsk} and the $\fcf$-measurability of the random variables $\itkless$ and $\Dp$, lead to~\eqref{trenteun}.\\
	\textbf{Case~3 (Bad bounds, $\boldsymbol{\iikc=1}$)}. The difference in~$\Phi_k^T$ may increase since even though good estimates of $f$ values occur, they might not provide enough decrease to cancel the increase in $h$ whenever Algorithm~\ref{algoPB} wrongly accepts an iterate because of bad bounds. The following will be shown
	\begin{equation}\label{trentesix}
	\itkless\Esp\left[\iikc(\Phi_{k+1}^T-\Phi_k^T)|\fcf\right] \leq 2\nu\left[(1-\alpha)^{1/2}+(1-\beta)^{1/2}\right](\Dp)^2\itkless.
	\end{equation}
	\begin{itemize}
		\item[(i)] { The iteration is} $f$-Dominating ($\idf=1$). The change in $\Phi_k^T$ is bounded, taking into account the possible aforementioned increase in $f$. Since the change in $h$ is zero, then it is easy to notice that the bound on the change in $\Phi_k^T$ can be derived from~\eqref{vingthuit} by replacing $\iik\ijkc$ by $\iikc$ as follows
		\begin{equation}\label{trentedeux}
		\begin{split}
		\itkless\iikc\idf(& \Phi_{k+1}^T-\Phi_k^T)\\
		&\leq\itkless\iikc\idf\frac{\nu}{\efPB}\left(\abs{f(\Xkfun)-\Fsk}+\abs{f(\Xkf)-\Fok}\right).
		\end{split}
		\end{equation}
		\item[(ii)] { The iteration is} $h$-Dominating ($\idh=1$). Since the change in $f$ is zero, the bound on the change in $\Phi_k^T$ is obtained by multiplying both sides of~\eqref{treize} by $\itkless$ and replacing $\Phi_k$ by $\Phi_k^T$ 
		\begin{equation}\label{trentetrois}
		\itkless\iikc\idh(\Phi_{k+1}-\Phi_k)\leq\itkless\iikc\idh\frac{\nu}{m\efPB}\left(\abs{h(\Xkiun)-\Hsk}+\abs{h(\Xki)-\Hok}\right). 
		\end{equation}
		\item[(iii)] { The iteration is} Improving ($\iimp=1$). The frame size parameter is updated as at $h$-Dominating iterations and the change in $f$ is zero. Thus, the bound on the change in $\Phi_k^T$ follows from~\eqref{trentetrois} by replacing~$\idh$ by~$\iimp$ as follows
		\begin{equation}\label{trentequatre}
		\itkless\iikc\iimp(\Phi_{k+1}-\Phi_k)\leq\itkless\iikc\iimp\frac{\nu}{m\efPB}\left(\abs{h(\Xkiun)-\Hsk}+\abs{h(\Xki)-\Hok}\right). 
		\end{equation}
		\item[(iv)] { The iteration is} Unsuccessful ($\iuns=1$). Because of the decrease of the frame size parameter and hence that in $\Phi_k^T$, the bound on the change in $\Phi_k^T$ is obviously as follows
		\begin{equation}\label{trentecinq}
		\begin{split}
		\itkless\iikc\iuns(& \Phi_{k+1}^T-\Phi_k^T)\\
		&\leq\itkless\iikc\iuns\nu\left[\frac{1}{\efPB}\left(\abs{f(\Xkfun)-\Fsk}+\abs{f(\Xkf)-\Fok}\right)\right.\\
		&\left.+\frac{1}{m\efPB}\left(\abs{h(\Xkiun)-\Hsk}+\abs{h(\Xki)-\Hok}\right)\right]
		\end{split}
		\end{equation}
	\end{itemize}
	Since~\eqref{trentecinq} dominates~\eqref{trentedeux}, \eqref{trentetrois} and~\eqref{trentequatre}, then combining all four cases lead to
	\begin{equation}\label{trentecinqprime}
	\begin{split}
	\itkless\iikc(& \Phi_{k+1}^T-\Phi_k^T)\\
	&\leq\itkless\iikc\nu\left[\frac{1}{\efPB}\left(\abs{f(\Xkfun)-\Fsk}+\abs{f(\Xkf)-\Fok}\right)\right.\\
	&\left.+\frac{1}{m\efPB}\left(\abs{h(\Xkiun)-\Hsk}+\abs{h(\Xki)-\Hok}\right)\right]
	\end{split}
	\end{equation}
	Now, taking expectations with respect to $\fcf$ on both sides of~\eqref{trentecinqprime} and using~\eqref{Hcond1}, \eqref{condFok} and~\eqref{condFsk} lead to~\eqref{trentesix}. Then, by combining the main results of Case~1, Case~2 and Case~3 of Part~2, specifically~\eqref{vingtsix},~\eqref{trenteun} and~\eqref{trentesix}, the following holds
	\begin{equation}\label{trentesept}
	\begin{split}
	\itkless\Esp\left[\Phi_{k+1}^T-\Phi_k^T|\fcf\right] &\leq\left[ -\alpha\beta(1-\nu)(1-\tau^2)+2\nu(1-\alpha)^{1/2} \right.\\
	& \left.+4\nu(1-\beta)^{1/2}\right](\Dp)^2\itkless.
	\end{split}
	\end{equation}
	Finally, choosing~$\alpha$ and~$\beta$ according to~\eqref{betaChoice} ensures that 
	\begin{equation}\label{trenteseptprime}
	-\alpha\beta(1-\nu)(1-\tau^2)+2\nu(1-\alpha)^{1/2}+4\nu(1-\beta)^{1/2}\leq -\frac{1}{2}\alpha\beta(1-\nu)(1-\tau^2),
	\end{equation}
	and~\eqref{phiPart1Tlessk} obviously follows from~\eqref{trentesept} and~\eqref{trenteseptprime} with the same constant $\eta=\frac{1}{2}\alpha\beta(1-\nu)(1-\tau^2)$ as Part~1, which achieves the proof.
\end{proof}

\subsubsection*{Proof of Corollary~\ref{corHFok}}
\begin{proof}
	Only~\eqref{hfzero} is proved but the proof also applies for $\abs{\Hsk-h(X^k+S^k)}$ and $\abs{\Fsk-f(X^k+S^k)}$. According to Assumption~\ref{randomEstim}{\it (vi)}, $\E{\abs{\Hok-h(X^k)} |\ \fcf}\leq m\efPB(1-\alpha)^{1/2}(\Dp)^2$, which implies that
	
	\begin{equation}\label{assup6}
	\E{\abs{\Hok-h(X^k)}}\leq m\efPB(1-\alpha)^{1/2}\Esp\left[(\Dp)^2\right].
	\end{equation}
	By summing each side of~\eqref{assup6} over $k$ from $0$ to $N$, and observing that 
	\begin{equation}
	0\leq S_{N}^h:=\sum_{k=0}^N\abs{\Hok-h(X^k)}\nearrow \sum_{k=0}^{+\infty}\abs{\Hok-h(X^k)},\ \ \text{and}\ \ 0\leq S_{N}^\Delta:=\sum_{k=0}^N(\Dp)^2 \nearrow\sum_{k=0}^{+\infty}(\Dp)^2,\nonumber
	\end{equation}
	then, it follows from the monotone convergence theorem~\cite{durrett2010probability} that
	\begin{eqnarray}
	\E{\sum_{k=0}^{+\infty}\abs{\Hok-h(X^k)}} &=&\E{\lim_{N\to+\infty} S_{N}^h}=\lim_{N\to+\infty}\E{S_{N}^h}=\sum_{k=0}^{+\infty}\E{\abs{\Hok-h(X^k)}}\nonumber \\
	&\leq& m\efPB(1-\alpha)^{1/2}\sum_{k=0}^{+\infty}\Esp\left[(\Dp)^2\right]= m\efPB(1-\alpha)^{1/2}\lim_{N\to+\infty}\E{S_{N}^\Delta} \nonumber\\
	&=&m\efPB(1-\alpha)^{1/2}\E{\lim_{N\to+\infty} S_{N}^\Delta}=m\efPB(1-\alpha)^{1/2}\Esp\left[\sum_{k=0}^{+\infty}(\Dp)^2\right]\nonumber\\
	&\leq& \mu\times m\efPB(1-\alpha)^{1/2}<+\infty,\nonumber
	\end{eqnarray}
	where $\mu$ is the constant of~\eqref{deltaPartT}. This means that $\ds{\sum_{k=0}^{+\infty}\abs{\Hok-h(X^k)}<+\infty}$ almost surely, 
	which implies the first result of~\eqref{hfzero}. The proof for $\abs{\Fok-f(X^k)}$ is similar by observing that (see~\eqref{condFok})
	\begin{equation}
	\E{\abs{\Fok-f(X^k)}|\fcf}\leq\efPB(1-\beta)^{1/2}(\Dp)^2.\nonumber
	\end{equation}
\end{proof}

\subsubsection*{Proof of Lemma~\ref{Psikh}}

\begin{proof}
	The proof uses ideas derived in~\cite{audet2019stomads,chen2018stochastic}. The result is proved by contradiction conditioned on the almost sure event $E_1=\{\Dp\to 0\}$. All that follows is conditioned on the event $E_1$. Assume that with nonzero probability, there exists a random variable $\mathcal{E}'>0$ such that  
	\begin{equation}\label{Psim}
	\Psi_k^h\geq \mathcal{E}', \quad\text{for all}\ k\in\N.
	\end{equation}
	Let $\{\xki\}_{k\in\N}$, $\{s^k\}_{k\in\N}$, $\{\dpl\}_{k\in\N}$ and $\epsilon'>0$  be realizations of $\{\Xki\}_{k\in\N}$, $\{S^k\}_{k\in\N}$,  $\{\Dp\}_{k\in\N}$ and $\mathcal{E}'$, respectively for which~\eqref{Psim} holds. Let $\hat{z}$ be the same parameter of Algorithm~\ref{algoPB} satisfying $\dpl\leq\dmax$ for all $k\geq 0$. Since $\dpl\to 0$ because of the conditioning on $E_1$, there exists $k_0\in\N$ such that
	\begin{equation}\label{deltalambda}
	\dpl<\lambda:=\min\left\lbrace \frac{\epsilon'}{m\efPB(\gamma+2)},\tau^{1-\hat{z}} \right\rbrace, \quad\text{for all}\ k\geq k_0.
	\end{equation}
	Consequently and since $\tau<1$, the random variable $R_k$ with realizations $r_k:=-\log_{\tau}\left(\frac{\dpl}{\lambda}\right)$ satisfies $r_k<0$ for all $k\geq k_0$. The main idea of the proof is to show that such realizations occur only with probability zero, thus leading to a contradiction. Let first show that $\{R_k\}_{k\in\N}$ is a submartingale. Let $k\geq k_0$ be an iteration for which the events $I_k$ and $J_k$ both occur, which happens with probability of at least $\alpha\beta>1/2$. Then, it follows from the definition of the event $I_k$ (see Definition~\ref{ikDef}) that
	\begin{eqnarray}
	h(\xki)&\leq&\uok(\xki)
	\leq  \sum_{j=1}^{m}\max\left\lbrace \cok(\xki),0 \right\rbrace +m\efPB(\dpl)^2=\hok(\xki)+ m\efPB(\dpl)^2,\quad\quad\quad \label{diff1}\\
	\text{and}\quad h(\xki+s^k)&\geq& \lsk(\xki+s^k)
	\geq\hsk(\xki+s^k)- m\efPB(\dpl)^2.\label{diff2}
	\end{eqnarray}
	
	\begin{equation}\label{zkid}
	\begin{split}
	\! \! \! \! \! \! \text{Hence,}\quad \hsk(\xki+s^k)&-\hok(\xki)=[h(\xki+s^k)-h(\xki)]+[h(\xki)-\hok(\xki)]\quad \\& +[\hsk(\xki+s^k)-h(\xki+s^k)]  \\
	&\leq 2m\efPB(\dpl)^2-\epsilon'\dpl\leq 2m\efPB(\dpl)^2-m\efPB(\gamma+2)(\dpl)^2=-\gamma m\efPB(\dpl)^2\quad\ \ \ 
	\end{split}
	\end{equation}
	where the first inequality in~\eqref{zkid} follows from~\eqref{Psim}, \eqref{diff1} and~\eqref{diff2} while the last one follows from~\eqref{deltalambda}. Consequently, the iteration $k$ of Algorithm~\ref{algoPB} can not be unsuccessful. Thus, the frame size parameter is updated according to $\delta_p^{k+1}=\tau^{-1}\dpl$ since $\dpl<\tau^{1-\hat{z}}$. Hence, $r_{k+1}=r_k+1$.
	
	Let $\fij=\sigma(I_0,I_1,\dots,I_{k-1})\cap\sigma(J_0,J_1,\dots,J_{k-1})$. For all other outcomes of $I_k$ and $J_k$, which will occur with a total probability of at most $1-\alpha\beta$, the inequality $\delta_p^{k+1}\geq\tau\dpl$ always holds, thus implying that $r_{k+1}\geq r_k-1$. Hence, 
	\begin{eqnarray}
	\E{\mathds{1}_{I_k\cap J_k}(R_{k+1}-R_k)|\fij}&=&\pr{I_k\cap J_k|\fij}\geq\alpha\beta \nonumber\\
	\text{and}\quad \E{\mathds{1}_{\overline{I_k\cap J_k}}(R_{k+1}-R_k)|\fij}&\geq&-\pr{\overline{I_k\cap J_k}|\fij}\geq\alpha\beta-1.\nonumber
	\end{eqnarray}
	Thus, $\E{R_{k+1}-R_k|\fij}\geq 2\alpha\beta-1>0$, implying that $\{R_k\}$ is a submartingale. The remainder of the proof is almost identical to that of the proof of the $\liminf$-type first-order result in~\cite{chen2018stochastic}.
	
	Now, let construct a random walk $W_k$ with realizations $w_k$ on the same probability space as $R_k$, which will serve as a lower bound on $R_k$. Define $W_k$ as in~\eqref{Wk} by
	\begin{equation}
	W_k= \sum_{i=0}^{k}(2\cdot\iii\iji-1),
	\end{equation}
	where the indicator random variables $\iii$ and $\iji$ are such that $\iii=1$ if $I_i$ occurs, $\iii=0$ otherwise, and similarly, $\iji=1$ if $J_i$ occurs while $\iji=0$ otherwise. Then following the proof of Theorem~\ref{trueIterations}, it is easy to notice that $\{W_k\}$ is a $\fij$-submartingale (see also~\cite{chen2018stochastic} for the same result), thus leading to the conclusion that
	%
	$\accolade{\underset{k\to+\infty}{\limsup}\ W_k=+\infty}$ almost surely. Since by construction
	\begin{equation}
	r_k-r_{k_0} = -{\log}_\tau \left(\frac{\dpl}{\delta^{k_0}_p}\right) = k-k_0 \geq w_k-w_{k_0},\nonumber 
	\end{equation}
	then with probability one, $R_k$ has to be positive infinitely often. Thus, the sequence of realizations $r_k$ such that $r_k<0$ for all $k\geq k_0$ occurs with probability zero. Consequently, the assumption that $\Psi_k^h\geq \mathcal{E}'$ holds for all $k\in\N$ with a positive probability is false, which implies that~\eqref{liminfPsikh} holds.
\end{proof}

\subsubsection*{Proof of Theorem~\ref{resulth}}

\begin{proof}
	The theorem is proved using ideas derived in~\cite{AuDe09a,audet2019stomads}. Define the events $E_1$ and $E_2$ by 
	\begin{equation}
	E_1=\accolade{\omega\in\Omega:\Dp(\omega)\to 0 }\quad\text{and}\quad  E_2=\accolade{\omega\in\Omega:\exists K'(\omega)\subset\N\ \text{such that}\ {\lim}_{K'(\omega)}\Psi_k^h(\omega)\leq 0}.\nonumber
	\end{equation}
	Then $E_1$ and $E_2$ are almost sure due to Corollary~\ref{meshframeConv} and~\eqref{liminfPsikh} respectively. Let $\omega\in E_1\cap E_2$ be an arbitrary outcome and note that the event $E_1\cap E_2$ is also almost sure as countable intersection of almost sure events. Then $\lim_{K'(\omega)}\Dp(\omega)=0$. It follows from the compactness hypothesis of Assumption~\ref{lipschitzAssumption} that there exists $K(\omega)\subseteq K'(\omega)$ for which the subsequence $\{\Xki(\omega)\}_{k\in K(\omega)}$ converges to a limit $\hat{X}_{\inf}(\omega)$. Specifically, $\hat{X}_{\inf}(\omega)$ is a refined point for the refining subsequence $\{\Xki(\omega)\}_{k\in K(\omega)}$. Let $v\in T^H_{\mathcal{X}}(\hat{X}_{\inf}(\omega))$ be a refining direction for $\hat{X}_{\inf}(\omega)$. Denote by $V$ the random vector with realizations $v$, i.e., $v=V(\omega)$, and let $\hat{x}_{\inf}=\hat{X}_{\inf}(\omega)$, $\xki=\Xki(\omega)$, $\dpl=\Dp(\omega)$, $\dm=\Dm(\omega)$, $\psi_k^h=\Psi_k^h(\omega)$ and $\mathcal{K}=K(\omega)$. Since $v$ is a refining direction, then there exists $\mathcal{L}\subseteq \mathcal{K}$ and polling directions $d^k\in\mathbb{D}^k_p(\xki)$ such that $v=\underset{k\in \mathcal{L}}{\lim}\frac{d^k}{\norminf{d^k}}$. For each $k\in\mathcal{L}$,  define 
	\begin{equation}
	\begin{split}
	t_k&=\dm\norminf{d^k}\to 0,\quad\quad\quad\quad\quad\ \ \ y^k=\xki+t_k\left(\frac{d^k}{\norminf{d^k}}-v\right)\to \hat{x}_{\inf},\nonumber\\
	a_k&=\frac{h(y^k+t_k v)-h(\xki)}{t_k} \quad\text{and}\quad b_k= \frac{h(\xki)-h(y^k)}{t_k}, \nonumber
	\end{split}
	\end{equation}
	where the fact that $t_k\to 0$ follows from Definition~\ref{meshpollset}, specifically the inequality $\dm\norminf{d^k}\leq \dpl b$. Since $h$ is $\lambda^h$--locally Lipschitz, then  
	\begin{equation}
	\abs{a_k}\leq\frac{\lambda^h}{t_k}\norminf{(y^k+t_k v)-\xki}=\lambda^h \quad\text{and}\quad \abs{b_k}\leq \frac{\lambda^h}{t_k}\norminf{\xki-y^k} =\lambda^h\norminf{\frac{d^k}{\norminf{d^k}}-v}\to 0, \nonumber
	\end{equation}
	which shows that Lemma~\ref{akbk} applies for both subsequences $\{a_k\}_{k\in\mathcal{L}}$ and $\{b_k\}_{k\in\mathcal{L}}$. Moreover, combining the inequality $\lim_{\mathcal{L}}\psi_k^h\leq 0$ and  Assumption~\ref{dmin} (the fact that $\dpl\norminf{d^k}\geq d_{\min}>0$), yields
	\begin{equation}\label{psikpositive}
	\lim_{k\in\mathcal{L}}\left(\frac{-\psi_k^h}{\dpl\norminf{d^k}}\right)=\lim_{k\in\mathcal{L}} \frac{h(\xki+\dm d^k)-h(\xki)}{t_k}\geq -d_{\min}^{-1}\lim_{k\in\mathcal{L}}\psi_k^h \geq 0. 
	\end{equation}
	Thus, by adding and subtracting $h(\xki)$ to the numerator of the definition of the Clarke derivative, and using the fact that $\xki+\dm d^k\in\X$ for sufficiently large $k\in\mathcal{L}$ since $v$ is a hypertangent direction,
	\begin{eqnarray}
	h^{\circ}(\hat{x}_{\inf};v)&\geq& \limsup_{k\in\mathcal{L}}\frac{h(y^k+t_k v)-h(\xki)+h(\xki)-h(y^k)}{t_k}=\limsup_{k\in\mathcal{L}}(a_k+b_k) \nonumber\\
	&=& \limsup_{k\in\mathcal{L}}a_k+ \lim_{k\in\mathcal{L}}b_k= \limsup_{k\in\mathcal{L}} \frac{h(\xki+\dm d^k)-h(\xki)}{t_k}\geq 0, \nonumber
	\end{eqnarray} 
	where the last inequality follows from~\eqref{psikpositive}. Now, notice that it has been showed that every outcome~$\omega$ arbitrarily chosen in $E_1\cap E_2$, belongs to the event
	\begin{equation}
	\begin{split}
	E_3:= \left\lbrace \omega\in\Omega:\exists K(\omega)\subseteq\N\ \text{and}\ \exists\hat{X}_{\inf}(\omega)\right.&= \lim_{k\in K(\omega)}\Xki(\omega), \hat{X}_{\inf}(\omega)\in\X,\ \text{such that} \\ \nonumber
	\forall V(\omega)\in&\left. T^H_{\mathcal{X}}(\hat{X}_{\inf}(\omega)),\   h^{\circ}(\hat{X}_{\inf}(\omega);V(\omega))\geq 0\right\rbrace,  
	\end{split} 
	\end{equation}
	thus implying that $E_1\cap E_2\subseteq E_3$. Then the proof is complete by noticing that $\pr{E_1\cap E_2}=1$.
\end{proof}

\subsubsection*{Proof of Lemma~\ref{Psikf}}

\begin{proof}
	The proof is almost identical to those of Lemma~\ref{Psikh} and a similar result in~\cite{audet2019stomads}. Hence, full details are not provided here again. Unless otherwise stated, all the sequences, events and constants considered are defined as in the proof of Lemma~\ref{Psikh}. The result is proved by contradiction and all that follows is conditioned on the almost sure event $E_1\cap\{T<+\infty\}$. 
	Assume that with nonzero probability there exists a random variable $\mathcal{E}''>0$ such that
	\begin{equation}\label{Psiz}
	\Psi_k^{f,T}\geq \mathcal{E}'', \quad\text{for all}\ k\geq 0.
	\end{equation}
	Let $\{\xkft\}_{k\in\N}$, $\{s^k\}_{k\in\N}$, $\{\dpl\}_{k\in\N}$ and $\epsilon''>0$ be realizations of $\{\Xkft\}_{k\in\N}$, $\{S^k\}_{k\in\N}$, $\{\Dp\}_{k\in\N}$ and~$\mathcal{E}''$, respectively for which~\eqref{Psiz} holds. Let $\bar{k}_0\in\N^*$ be such that 
	\begin{equation}\label{deltalambda2}
	\dpl<\lambda:=\min\left\lbrace \frac{\epsilon''}{\efPB(\gamma+2)},\tau^{1-\hat{z}} \right\rbrace \quad\text{for all}\ k\geq \bar{k}_0.
	\end{equation}
	The key element of the proof is to show that an iteration $k\geq k_0:=\max\{\bar{k}_0,t\}$ for which the events $I_k$ and $J_k$ both occur can not be unsuccessful, thus leading to the fact that $\{R_k\}$ is a submartingale.
	
	It follows from~\eqref{Psiz} and~\eqref{deltalambda2} that 
	\begin{equation}
	f(\xkf+s^k)-f(\xkf)\leq -\epsilon''\dpl\leq-(\gamma+2)\efPB(\dpl)^2, \quad \text{for all}\ k\geq k_0. \nonumber
	\end{equation}
	
	\begin{equation}
	\begin{split}
	\!\!\!\!\!\!\!\!\!\! \text{Since $J_k$ occurs,}\quad \fsk(\xkf+s^k)-\fok(\xkf)&=[f(\xkf+s^k)-f(\xkf)]+[f(\xkf)-\fok(\xkf)]	\nonumber\\
	&  +[\fsk(\xkf+s^k)-f(\xkf+s^k)]\nonumber \\
	&\leq-(\gamma+2)\efPB(\dpl)^2+2\efPB(\dpl)^2=-\gamma\efPB(\dpl)^2,\nonumber
	\end{split}
	\end{equation}
	which implies that the iteration $k\geq k_0$ of Algorithm~\ref{algoPB} can not be unsuccessful.
\end{proof}

\subsubsection*{Proof of Theorem~\ref{xhatD}}

\begin{proof}
	The proof results from Corollary~\ref{corHFok} by observing that for all outcome $\omega$ in the almost sure event 
	\begin{equation}
	E_4:=\left\lbrace \omega\in\Omega:\forall K(\omega)\subseteq\N,\lim_{k\in K(\omega)}\abs{\Hok(\Xkft)(\omega)-h(\Xkft(\omega))}=0 \right\rbrace\cap\{T<+\infty\} ,\nonumber
	\end{equation}
	
	\begin{equation}
	\lim_{k\in K(\omega)}\abs{\Hok(\Xkft)(\omega)-h(\Xkft(\omega))}=\lim_{k\in K(\omega)}h(\Xkft(\omega))=h(\Xhatf(\omega))=0,\nonumber
	\end{equation}
	where the penultimate equality follows from the continuity of $h$ in~$\X$. This means that \[\pr{h(\Xhatf)=0}=\pr{\Xhatf\in\D}=1.\]
\end{proof}

\subsubsection*{Proof of Theorem~\ref{resultf}}

\begin{proof}
	First, notice that the fact that $\pr{\Xhatf\in\D}=1$ follows from Theorem~\ref{xhatD}. Then the proof easily follows from that of Theorem~\ref{resulth}, by replacing $h$ by $f$,  $\hat{x}_{\inf}=\hat{X}_{\inf}(\omega)$ by $\xhatf=\Xhatf(\omega)$, $\xki=\Xki(\omega)$ by $\xkft=\Xkft(\omega)$, $\psi_k^h=\Psi_k^h(\omega)$ by $\psi_k^{f,t}=\Psi_k^{f,T}(\omega)$ with $t=T(\omega)$ and  $T^H_{\X}(\cdot)$ by $T^H_{\D}(\cdot)$, for $\omega$ fixed and arbitrarily chosen in the almost sure event $E_1\cap E_5\cap\{T<+\infty \}$, where 
	\begin{equation}
	\begin{split}
	E_5= \left\lbrace \omega\in\Omega:\exists K(\omega)\subseteq\N\ \text{such that}\ \Xhatf(\omega)\right.&= \lim_{k\in K(\omega)}\Xkft(\omega),\  \Xhatf(\omega)\in\D, \\
	\lim_{k\in K(\omega)}\Psi_k^{f,T}(\omega)\leq 0 & \left. \ \text{and}\ \lim_{k\in K(\omega)}\Hok(\Xkft)(\omega) = 0 \right\rbrace.
	\end{split} 
	\end{equation}
\end{proof}

\bibliographystyle{plain}
\bibliography{bibliography}

\begin{thebibliography}{10}

\bibitem{AbAuDeLe09}
M.A. Abramson, C.~Audet, J.E. {Dennis, Jr.}, and S.~{Le~Digabel}.
\newblock {OrthoMADS: A Deterministic MADS Instance with Orthogonal
  Directions}.
\newblock {\em SIAM Journal on Optimization}, 20(2):948--966, 2009.

\bibitem{AlAuBoLed2019}
S.~Alarie, C.~Audet, P.-Y. Bouchet, and S.~{Le~Digabel}.
\newblock {Optimization of noisy blackboxes with adaptive precision}.
\newblock Technical Report G-2019-84, Les cahiers du GERAD, 2019.

\bibitem{AnFe01a}
E.J. Anderson and M.C. Ferris.
\newblock {A Direct Search Algorithm for Optimization with Noisy Function
  Evaluations}.
\newblock {\em SIAM Journal on Optimization}, 11(3):837--857, 2001.

\bibitem{angun2009response}
E.~Ang{\"u}n, J.~Kleijnen, D.~den Hertog, and G.~G{\"u}rkan.
\newblock {Response surface methodology with stochastic constraints for
  expensive simulation}.
\newblock {\em Journal of the operational research society}, 60(6):735--746,
  2009.

\bibitem{Armi66a}
L.~Armijo.
\newblock Minimization of functions having {L}ipschitz continuous first partial
  derivatives.
\newblock {\em Pacific Journal of Mathematics}, 16(1):1--3, 1966.

\bibitem{Audet2014a}
C.~Audet.
\newblock A survey on direct search methods for blackbox optimization and their
  applications.
\newblock In P.M. Pardalos and T.M. Rassias, editors, {\em Mathematics without
  boundaries: Surveys in interdisciplinary research}, chapter~2, pages 31--56.
  Springer, 2014.

\bibitem{AuDe2006}
C.~Audet and J.E. {Dennis, Jr.}
\newblock {Mesh Adaptive Direct Search Algorithms for Constrained
  Optimization}.
\newblock {\em SIAM Journal on Optimization}, 17(1):188--217, 2006.

\bibitem{AuDe09a}
C.~Audet and J.E. {Dennis, Jr.}
\newblock {A Progressive Barrier for Derivative-Free Nonlinear Programming}.
\newblock {\em SIAM Journal on Optimization}, 20(1):445--472, 2009.

\bibitem{AuDeLe07}
C.~Audet, J.E. {Dennis, Jr.}, and S.~{Le~Digabel}.
\newblock {Parallel Space Decomposition of the Mesh Adaptive Direct Search
  Algorithm}.
\newblock {\em SIAM Journal on Optimization}, 19(3):1150--1170, 2008.

\bibitem{AuDeLe08}
C.~Audet, J.E. {Dennis, Jr.}, and S.~{Le~Digabel}.
\newblock {\em {Parallel Space Decomposition of the Mesh Adaptive Direct Search
  algorithm}}, volume~5 of {\em The GERAD newsletters (Eleven articles
  published in leading journals)}, page~3.
\newblock 2008.

\bibitem{audet2019stomads}
C.~Audet, K.~J. Dzahini, M.~Kokkolaras, and S.~Le Digabel.
\newblock {StoMADS: Stochastic blackbox optimization using probabilistic
  estimates}.
\newblock Technical Report G-2019-30, Les cahiers du GERAD, 2019.

\bibitem{AuHa2017}
C.~Audet and W.~Hare.
\newblock {\em {Derivative-Free and Blackbox Optimization}}.
\newblock Springer Series in Operations Research and Financial Engineering.
  Springer International Publishing, Cham, Switzerland, 2017.

\bibitem{AudIhaLedTrib2016}
C.~Audet, A.~Ihaddadene, S.~{Le~Digabel}, and C.~Tribes.
\newblock {Robust optimization of noisy blackbox problems using the Mesh
  Adaptive Direct Search algorithm}.
\newblock {\em Optimization Letters}, 12(4):675--689, 2018.

\bibitem{AuLeDTr2018}
C.~Audet, S.~{Le~Digabel}, and C.~Tribes.
\newblock {The Mesh Adaptive Direct Search Algorithm for Granular and Discrete
  Variables}.
\newblock {\em SIAM Journal on Optimization}, 29(2):1164--1189, 2019.

\bibitem{AuMa2014}
F.~Augustin and Y.M. Marzouk.
\newblock {NOWPAC: A provably convergent derivative-free nonlinear optimizer
  with path-augmented constraints}.
\newblock {\em arXiv}, 2014.

\bibitem{AuMa2017}
F.~Augustin and Y.M. Marzouk.
\newblock {A trust-region method for derivative-free nonlinear constrained
  stochastic optimization}.
\newblock {\em arXiv}, 2017.

\bibitem{BaScVi2014}
A.S. Bandeira, K.~Scheinberg, and L.N. Vicente.
\newblock {Convergence of trust-region methods based on probabilistic models}.
\newblock {\em SIAM Journal on Optimization}, 24(3):1238--1264, 2014.

\bibitem{barton1996nelder}
R.R. Barton and J.S. {Ivey, Jr.}
\newblock {Nelder-Mead simplex modifications for simulation optimization}.
\newblock {\em Management Science}, 42(7):954--973, 1996.

\bibitem{bertsimas2010nonconvex}
D.~Bertsimas, O.~Nohadani, and K.~M. Teo.
\newblock {Nonconvex robust optimization for problems with constraints}.
\newblock {\em INFORMS Journal on Computing}, 22(1):44--58, 2010.

\bibitem{bhattacharya2007basic}
R.N. Bhattacharya and E.C. Waymire.
\newblock {\em {A basic course in probability theory}}, volume~69.
\newblock Springer, 2007.

\bibitem{blanchet2019convergence}
J.~Blanchet, C.~Cartis, M.~Menickelly, and K.~Scheinberg.
\newblock {Convergence Rate Analysis of a Stochastic Trust-Region Method via
  Supermartingales}.
\newblock {\em INFORMS Journal on Optimization}, 2019.

\bibitem{Ch2012}
K.H. Chang.
\newblock Stochastic nelder-mead simplex method - a new globally convergent
  direct search method for simulation optimization.
\newblock {\em European Journal of Operational Research}, 220(3):684--694,
  2012.

\bibitem{chen2018stochastic}
R.~Chen, M.~Menickelly, and K.~Scheinberg.
\newblock {Stochastic optimization using a trust-region method and random
  models}.
\newblock {\em Mathematical Programming}, 169(2):447--487, 2018.

\bibitem{ChWa2010}
X.~Chen and N.~Wang.
\newblock Optimization of short-time gasoline blending scheduling problem with
  a {DNA} based hybrid genetic algorithm.
\newblock {\em Chemical Engineering and Processing: Process Intensification},
  49(10):1076--1083, 2010.

\bibitem{Clar83a}
F.H. Clarke.
\newblock {\em Optimization and Nonsmooth Analysis}.
\newblock John Wiley \& Sons, New York, 1983.
\newblock Reissued in 1990 by SIAM Publications, Philadelphia, as Vol.~5 in the
  series Classics in Applied Mathematics.

\bibitem{CoScVibook}
A.R. Conn, K.~Scheinberg, and L.N. Vicente.
\newblock {\em {Introduction to Derivative-Free Optimization}}.
\newblock MOS-SIAM Series on Optimization. SIAM, Philadelphia, 2009.

\bibitem{curtis2020adaptive}
F.~E. Curtis and K.~Scheinberg.
\newblock {Adaptive Stochastic Optimization}.
\newblock {\em arXiv}, 2020.

\bibitem{curtis2017stochastic}
F.E. Curtis, K.~Scheinberg, and R.~Shi.
\newblock {A Stochastic Trust Region Algorithm Based on Careful Step
  Normalization}.
\newblock {\em arXiv}, 2017.

\bibitem{diniz2019pattern}
M.~A. Diniz-Ehrhardt, D.~G. Ferreira, and S.~A. Santos.
\newblock {A pattern search and implicit filtering algorithm for solving
  linearly constrained minimization problems with noisy objective functions}.
\newblock {\em Optimization Methods and Software}, 34(4):827--852, 2019.

\bibitem{diniz2020applying}
M.~A. Diniz-Ehrhardt, D.~G. Ferreira, and S.~A. Santos.
\newblock {Applying the pattern search implicit filtering algorithm for solving
  a noisy problem of parameter identification}.
\newblock {\em Computational Optimization and Applications}, pages 1--32, 2020.

\bibitem{DoMo02}
E.D. Dolan and J.J. Mor\'e.
\newblock Benchmarking optimization software with performance profiles.
\newblock {\em Mathematical Programming}, 91(2):201--213, 2002.

\bibitem{durrett2010probability}
R.~Durrett.
\newblock {\em {Probability: theory and examples}}.
\newblock Cambridge university press, 2010.

\bibitem{dzahini2020expected}
K.~J. Dzahini.
\newblock {Expected complexity analysis of stochastic direct-search}.
\newblock Technical Report G-2020-18, Les cahiers du GERAD, 2020.

\bibitem{GraRoViZh2019}
S.~Gratton, C.~W. Royer, L.~N. Vicente, and Z.~Zhang.
\newblock {Direct search based on probabilistic feasible descent for bound and
  linearly constrained problems}.
\newblock {\em Computational Optimization and Applications}, 72(3):525--559,
  2019.

\bibitem{HoSc1981}
W.~Hock and K.~Schittkowski.
\newblock {\em {Test Examples for Nonlinear Programming Codes}}, volume 187 of
  {\em Lecture Notes in Economics and Mathematical Systems}.
\newblock Springer, Berlin, Germany, 1981.

\bibitem{Jahn94a}
J.~Jahn.
\newblock {\em Introduction to the {T}heory of {N}onlinear {O}ptimization}.
\newblock Springer, Berlin, 1994.

\bibitem{KiArYa2011}
S.~Kitayama, M.~Arakawa, and K.~Yamazaki.
\newblock Sequential approximate optimization using radial basis function
  network for engineering optimization.
\newblock {\em Optimization and Engineering}, 12(4):535--557, 2011.

\bibitem{klassen2009improving}
K.~J. Klassen and R.~Yoogalingam.
\newblock {Improving performance in outpatient appointment services with a
  simulation optimization approach}.
\newblock {\em Production and Operations Management}, 18(4):447--458, 2009.

\bibitem{lacksonen2001empirical}
T.~Lacksonen.
\newblock {Empirical comparison of search algorithms for discrete event
  simulation}.
\newblock {\em Computers \& Industrial Engineering}, 40(1-2):133--148, 2001.

\bibitem{LaBi2016}
J.~Larson and S.C. Billups.
\newblock {Stochastic derivative-free optimization using a trust region
  framework}.
\newblock {\em Computational Optimization and Applications}, 64(3):619--645,
  2016.

\bibitem{LedWild2015}
S.~{Le~Digabel} and S.M. Wild.
\newblock {A Taxonomy of Constraints in Simulation-Based Optimization}.
\newblock Technical Report G-2015-57, Les cahiers du GERAD, 2015.

\bibitem{letham2019constrained}
B.~Letham, B.~Karrer, G.~Ottoni, and E.~Bakshy.
\newblock {Constrained Bayesian optimization with noisy experiments}.
\newblock {\em Bayesian Analysis}, 14(2):495--519, 2019.

\bibitem{LuVl00}
L.~Luk{\v s}an and J.~Vl{\v c}ek.
\newblock Test problems for nonsmooth unconstrained and linearly constrained
  optimization.
\newblock Technical Report V-798, ICS AS CR, 2000.

\bibitem{MezCoe05}
E.~Mezura-Montes and C.A. Coello.
\newblock {Useful Infeasible Solutions in Engineering Optimization with
  Evolutionary Algorithms}.
\newblock In {\em {Proceedings of the 4th Mexican International Conference on
  Advances in Artificial Intelligence}}, MICAI'05, pages 652--662, Berlin,
  Heidelberg, 2005. Springer-Verlag.

\bibitem{mockus2012bayesian}
J.~Mockus.
\newblock {\em {Bayesian approach to global optimization: theory and
  applications}}, volume~37 of {\em Mathematics and Its Applications}.
\newblock Springer Science \& Business Media, 2012.

\bibitem{MoWi2009}
J.J. Mor\'e and S.M. Wild.
\newblock Benchmarking derivative-free optimization algorithms.
\newblock {\em SIAM Journal on Optimization}, 20(1):172--191, 2009.

\bibitem{NeMe65a}
J.A. Nelder and R.~Mead.
\newblock A simplex method for function minimization.
\newblock {\em The Computer Journal}, 7(4):308--313, 1965.

\bibitem{paquette2018stochastic}
C.~Paquette and K.~Scheinberg.
\newblock {A Stochastic Line Search Method with Expected Complexity Analysis}.
\newblock {\em SIAM Journal on Optimization}, 30(1):349--376, 2020.

\bibitem{RoMo1951}
H.~Robbins and S.~Monro.
\newblock {A Stochastic Approximation Method}.
\newblock {\em The Annals of Mathematical Statistics}, 22(3):400--407, 1951.

\bibitem{Rock80a}
R.T. Rockafellar.
\newblock Generalized directional derivatives and subgradients of nonconvex
  functions.
\newblock {\em Canad. J. Math.}, 32(2):257--280, 1980.

\bibitem{RodRenWat98}
J.F. Rodr{\'\i}guez, J.E. Renaud, and L.T. Watson.
\newblock {Trust Region Augmented Lagrangian Methods for Sequential Response
  Surface Approximation and Optimization}.
\newblock {\em Journal of Mechanical Design}, 120(1):58--66, 1998.

\bibitem{shashaani2018astro}
S.~Shashaani, F.S. Hashemi, and R.~Pasupathy.
\newblock {ASTRO-DF: A class of adaptive sampling trust-region algorithms for
  derivative-free stochastic optimization}.
\newblock {\em SIAM Journal on Optimization}, 28(4):3145--3176, 2018.

\bibitem{TaoWan08}
J.~Tao and N.~Wang.
\newblock {DNA Double Helix Based Hybrid GA for the Gasoline Blending Recipe
  Optimization Problem}.
\newblock {\em Chemical Engineering and Technology}, 31(3):440--451, 2008.

\bibitem{wang2018constrained}
Z.~Wang and M.~Ierapetritou.
\newblock {Constrained optimization of black-box stochastic systems using a
  novel feasibility enhanced Kriging-based method}.
\newblock {\em Computers \& Chemical Engineering}, 118:210--223, 2018.

\bibitem{ChWa2010b}
J.~Zhao and N.~Wang.
\newblock A bio-inspired algorithm based on membrane computing and its
  application to gasoline blending scheduling.
\newblock {\em Computers and Chemical Engineering}, 35(2):272--283, 2011.

\end{thebibliography}

\end{document}